\documentclass[reqno,11pt]{amsart}
\usepackage{amsfonts}
\usepackage{amssymb}
\usepackage{mathrsfs}
\usepackage{mathtools}
\usepackage[colorlinks=true,linkcolor=blue,citecolor=blue,urlcolor=blue]{hyperref}

\newtheorem{theorem}{Theorem}[section]
\newtheorem{corollary}[theorem]{Corollary}
\newtheorem{lemma}[theorem]{Lemma}
\newtheorem{proposition}[theorem]{Proposition}
\newtheorem{conjecture}[theorem]{Conjecture}
\theoremstyle{definition}
\newtheorem{definition}[theorem]{Definition}

\numberwithin{equation}{section}

\newcommand{\RR}{\mathbb{R}}
\newcommand{\CC}{\mathbb{C}}
\newcommand{\QQ}{\mathbb{Q}}
\newcommand{\ZZ}{\mathbb{Z}}
\newcommand{\NN}{\mathbb{N}}
\newcommand{\be}{\mathbf{e}}
\newcommand{\bff}{\mathbf{f}}

\newcommand{\bStab}{\mathbf{Stab}}
\newcommand{\bG}{\mathbf{G}}
\newcommand{\bS}{\mathbf{S}}
\newcommand{\cA}{\mathcal{A}}
\newcommand{\cB}{\mathcal{B}}
\newcommand{\cC}{\mathcal{C}}
\newcommand{\cD}{\mathcal{D}}

\newcommand{\cK}{\mathcal{K}}
\newcommand{\cL}{\mathcal{L}}
\newcommand{\cP}{\mathcal{P}}
\newcommand{\cO}{\mathcal{O}}
\newcommand{\cS}{\mathcal{S}}
\newcommand{\cW}{\mathcal{W}}
\newcommand{\cU}{\mathcal{U}}
\newcommand{\SL}{\mathrm{SL}}
\newcommand{\Ad}{\mathrm{Ad}}
\newcommand{\SO}{\mathrm{SO}}
\newcommand{\ad}{\mathrm{ad}}
\newcommand{\Rep}{\mathrm{Re}}
\newcommand{\Lie}{\mathrm{Lie}}
\newcommand{\Gal}{\mathrm{Gal}}
\newcommand{\covol}{\mathrm{covol}}
\newcommand{\coeff}{\mathrm{coeff}}
\newcommand{\op}{\mathrm{op}}
\newcommand{\diag}{\mathrm{diag}}
\newcommand{\dist}{\mathrm{dist}}
\newcommand{\supp}{\mathrm{supp}}

\newcommand{\Bad}{\mathbf{Bad}}
\newcommand{\tr}{\mathrm{tr}}
\newcommand{\id}{\mathrm{id}}
\newcommand{\fsl}{\mathfrak{sl}}
\newcommand{\fa}{\mathfrak{a}}
\newcommand{\fg}{\mathfrak{g}}
\newcommand{\fk}{\mathfrak{k}}
\newcommand{\fh}{\mathfrak{h}}
\newcommand{\fn}{\mathfrak{n}}
\newcommand{\fp}{\mathfrak{p}}
\newcommand{\la}{\left\langle}
\newcommand{\ra}{\right\rangle}
\renewcommand{\emptyset}{\varnothing}
\renewcommand{\setminus}{\smallsetminus}

\begin{document}

\title[Bounded orbits on homogeneous spaces]{Bounded orbits on homogeneous spaces and\\ bounded geodesics on locally symmetric spaces}
\date{}
\author{Chengyang Wu}
\address{Department of Mathematics, University of Chicago, Chicago, Illinois, 60637, U.S.
}
\email{chengyangwu1999@gmail.com}

\begin{abstract}
We present a unified approach to proving the following two results:
\begin{enumerate}
    \item[(i)] Let $G$ be a Lie group, $\Gamma$ a lattice in $G$, $X=G/\Gamma$, and let $F$ be a one-parameter $\Ad$-diagonalizable subgroup of $G$. Then the set $$
\{x\in X:Fx\text{ is bounded in }X\}
$$
is hyperplane absolute winning on $X$. This fully solves a conjecture \cite[Conjecture 7.1]{AGK} of An-Guan-Kleinbock in 2015.
    \item[(ii)] Let $G$ be a connected semisimple Lie group with finite center and without compact factors, $K\leq G$ a maximal compact subgroup, $\Gamma\leq G$ a torsion-free lattice, and $M=K\backslash G/\Gamma$. Then for any $p\in M$, the set $$
    \{\xi\in S_pM:\gamma(p,\xi)\text{ is bounded in }M\}
    $$
    is hyperplane absolute winning on $S_pM$, where $\gamma(p,\xi)$ denotes the geodesic ray through the base point $p$ in the direction $\xi$. This fully solves a conjecture \cite[Conjecture 1.2]{GW26} of Guan-Wu in 2026.
\end{enumerate}
\end{abstract}

\maketitle

\section{Introduction}\label{S:introduction}

\subsection{Bounded orbits on homogeneous spaces}\label{SS:statement}
Let $G$ be a Lie group, $\Gamma$ a lattice in $G$, and $F=\{g_t:t\in\RR\}$ a one-parameter subgroup of $G$. The action of $F$ on the homogeneous space $X=G/\Gamma$ by left translations defines a flow. Consider the set \begin{equation}
    E_X(F):=\{x\in X:Fx\text{ is bounded in }X\}.
\end{equation}
This set carries important information about the dynamical system $(X,F)$ and has been extensively studied. Clearly, when $X$ is non-compact and $\mu$ is an $F$-ergodic measure on $X$ with full support, one has $\mu(E_X(F))=0$. On the other hand, for certain classes of one-parameter subgroups $F$ of $G$, these sets $E_X(F)$ can be shown to be quite substantial--in particular, they are thick. Here and hereafter, we say that a subset is \textsl{thick} if its intersection with any nonempty open subset has full Hausdorff dimension in the ambient space. 

The main focus of this paper is the case when $F=\{g_t:t\in\RR\}$ is $\Ad$-diagonalizable. Throughout this paper, we say that $F$ is \textsl{$\Ad$-diagonalizable} if $\Ad(g_t)$ is diagonalizable over $\CC$ for every $t\in\RR$, and \textsl{$\Ad$-$\RR$-diagonalizable} if it is diagonalizable over $\RR$. In 1996, a seminal work \cite{KM96} by Kleinbock and Margulis implies that: if $F$ is $\Ad$-diagonalizable, then the set $E_X(F)$ is thick in $X$. However, for two arbitrary $\Ad$-diagonalizable one-parameter subgroups $F_1\neq F_2$ of $G$, their method cannot even show that \begin{equation}\label{E:nonempty}
    E_X(F_1)\cap E_X(F_2)\neq\varnothing.
\end{equation}
The same limitation also occurred in the later work \cite{KW13} of Kleinbock and Weiss. The authors proved certain modified winning property for the set $E_X(F)$, which, however, cannot be applied to show that winning sets of different games defined by different flows have nonempty intersection.

In history, a general framework has emerged to cope with the above difficulty. Schmidt \cite{Sch66} developed a game played on any complete metric space, whose winning sets satisfy the following two properties: \begin{itemize}
        \item[(W1)] winning subsets of Riemannian manifolds are thick;
        \item[(W2)] a countable intersection of winning sets is again winning.
    \end{itemize}
One may refer to Section \ref{SS:Schmidt} for relevant definitions and properties. In view of (W1) and (W2), if one proves that the set $E_X(F)$ is winning for an arbitrary $\Ad$-diagonalizable one-parameter subgroup $F\subseteq G$, then it is clear that \eqref{E:nonempty} holds.

The very first winning result was established by Schmidt \cite{Sch66,Sch69} in the field of Diophantine approximation. He proved that for any two natural numbers $m$ and $n$, the set of badly approximable $m\times n$ matrices is winning on $\RR^{m\times n}$. In 1980s, Dani \cite{Dani85} established a correspondence between Diophantine properties of matrices and dynamical behaviors of trajectories, and hence showed that \begin{equation}\label{E:Sch-winning}
    \text{the set }E_X(F) \text{ is winning on }X \text{ with respect to Schmidt's game}
\end{equation}
when $X=\SL_d(\RR)/\SL_d(\ZZ)\;(d=m+n)$ and $F\subseteq \SL_d(\RR)$ is a one-parameter diagonal subgroup with two different eigenvalues. Dani \cite{Dani86} also proved the statement \eqref{E:Sch-winning} when $X$ is a finite volume quotient of a connected semisimple Lie group of $\RR$-rank $1$.

Later, more variants of Schmidt's game were introduced, including the hyperplane absolute game and Cantor potential game (see Section \ref{S:pre} for details). Using these tools, people have proved the winning property of $E_X(F)$ in the following cases: \begin{enumerate}
    \item[(i)] (An--Guan--Kleinbock \cite{AGK}) $X=\SL_3(\RR)/\SL_3(\ZZ)$;
    \item[(ii)]  (Guan--Wu \cite{GW18}) $X=\SL_{d}(\RR)/\SL_{d}(\ZZ)$ and $F=\{g_t:t\in\RR\}$ is a one-parameter diagonal subgroup of the form $$g_t=\diag(e^{\lambda t}I_{d-2},e^{\mu t},e^{-t}),\quad \text{where}\quad \lambda>\mu>0\text{ and }(d-2)\lambda+\mu=1.
    $$
    \item[(iii)] (Einsiedler--Ghosh--Lytle \cite{EGL16}, Kleinbock-Ly \cite{KL16}, An--Ghosh--Guan--Ly \cite{AGGL})\\
    $X=\SL_2(\RR)^r/\Gamma$ where $\Gamma$ is a lattice in $\SL_2(\RR)^r$;
    \item[(iv)] (Sawant \cite{Saw23}) $X=\SL_2(\CC)^r/\Gamma$ where $\Gamma$ is a lattice in $\SL_2(\CC)^r$;
    \item[(v)] (Sawant \cite{Saw25}) $X=\SL_3(\CC)/\SL_3(\cO_K)$ where $K$ is an imaginary quadratic field.
\end{enumerate}
In \cite{AGK}, the authors proposed a general conjecture (see \cite[Conjecture 7.1]{AGK}): 

\begin{conjecture}\label{C:AGK}
Let $G$ be a Lie group, $\Gamma$ a lattice in $G$, $X=G/\Gamma$, and let $F$ be a one-parameter $\Ad$-diagonalizable subgroup of $G$. Then the set $E_X(F)$ is HAW on $X$.
\end{conjecture}

Here HAW stands for \textsl{hyperplane absolute winning}, that is, the winning property in the hyperplane absolute game. This paper solves Conjecture \ref{C:AGK} in full generality.

\begin{theorem}\label{T:main-homo}
    Conjecture \ref{C:AGK} holds.
\end{theorem}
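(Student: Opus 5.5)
We will prove that $E_X(F)$ is HAW by passing to expanding horospherical leaves, and then win the hyperplane absolute game on a leaf with a Cantor potential game argument.

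\emph{Step 1 (reductions).} Write $F=\{g_t\}$ and decompose $\Ad(g_1)$ into its compact part and its $\RR$-diagonalizable part. The compact part does not affect boundedness: $Fx$ is bounded if and only if the orbit under the $\RR$-split part is bounded, up to a compact perturbation. So we may assume $F$ is $\Ad$-$\RR$-diagonalizable. Let $H=G^+$ be the expanding horospherical subgroup of $g_1$, which is a connected nilpotent group, together with its contracting counterpart $G^-$ and the centralizer $G^0$. The map $G^-\times G^0\times G^+\to G$ is a local diffeomorphism near the identity. We use the standard transference principle: HAW is preserved by $C^1$ diffeomorphisms and is a local property. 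It therefore suffices to show that for every $x\in X$, the set $\{h\in H: hx\in E_X(F)\}$ is HAW on $H$, and that this property passes to neighbourhoods $G^-G^0Hx$. The passage works because multiplying by $G^-G^0$ changes the forward orbit only by a bounded amount, and the two-sided orbit is handled by also using $G^-$ for negative times (a product of two HAW sets after the product lemma). Since $G^+$ is a product of root subspaces, the relevant games on $H$ use quasi-norm balls adapted to the weights of $g_t$. HAW on these balls implies HAW with respect to the Riemannian metric, by the known comparison for hyperplane absolute games on $\RR^n$ with weighted balls, or we simply play with $g_t$-adapted boxes.

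\emph{Step 2 (Cantor potential game).} Next we show that $\{h\in H: g_t hx \text{ stays in a compact set for } t\ge0\}$ is winning for the hyperplane potential game on $H$. This game is known to imply HAW, as stated among the preliminaries in Section~\ref{S:pre}. We quantify non-divergence via the Kleinbock--Margulis / Eskin--Margulis–Mozes height function $\alpha$ on $X$. It satisfies a contraction inequality: for some $t_0$ and $c<1$, the average of $\alpha(g_{t_0}hx)$ over $h$ in a unit ball of $H$ is at most $c\,\alpha(x)+b$. For general $G$ (not semisimple, arithmetic or not) we first reduce to the case where $G$ is semisimple without compact factors and $\Gamma$ is irreducible, which is standard via the Levi decomposition and the fact that the solvable radical meets $\Gamma$ cocompactly. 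For rank one we use Margulis functions built from cusps; for higher rank we use arithmeticity and the representation-theoretic height functions $\|g v\|^{-\beta}$ on the representations attached to maximal parabolics. At stage $k$, Bob's ball $B_k$ corresponds under $g_{kt_0}$ to a unit-scale ball. The points of $B_k$ whose orbit escapes at time $(k+1)t_0$ lie in a union of neighbourhoods of proper algebraic subvarieties of $H$. We need these sets to be coverable by neighbourhoods of hyperplanes with a summable potential cost.

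\emph{Main obstacle.} The hardest step is exactly this covering. The dangerous sets $\{h: \|g_t h v\|\text{ small}\}$ for the relevant integral vectors $v$ must be controlled by hyperplane neighbourhoods uniformly across all scales and all cusps, and interactions between different flags of rational subspaces must be accounted for. I would attack it with a $(C,\alpha)$-good-function / quantitative nondivergence argument on $H$: polynomial maps $h\mapsto g_t h v$ on the nilpotent $H$ are $(C,\alpha)$-good. Combining this with the exponential decay of $\alpha$ gives the potential bound $\sum_i r_i^\gamma\le \epsilon\,\rho^\gamma$. Here each small-value set is approximated at the current scale by a neighbourhood of its tangent hyperplane, which is legitimate because at unit scale the defining polynomial is nearly linear or else its small set already has small measure. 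Countable intersection over the two time directions and over a countable cover of $X$, by property (W2), then yields the theorem.
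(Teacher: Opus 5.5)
\textbf{There is a genuine gap in Step 2, at exactly the point you flag.} Step 1 essentially matches the paper's reduction: local charts $\fp\times H\to X$ with Lemma~\ref{L:HAW-mnfd}(iii),(v), the Jordan decomposition, and $E_X(F)=E_X(F^+)\cap E_X(F^-)$ (an intersection, not a product). There are two fixable slips. First, the solvable radical need not meet $\Gamma$ in a lattice when $G/R$ has compact factors. For example, take $\Gamma=\{(n,k^n)\}\subset\RR\times\SO(3)$ with $k$ of infinite order. This is why the paper quotients by the maximal connected amenable normal subgroup instead. Second, there is no general comparison that turns a game on $g_t$-adapted boxes into HAW for Euclidean balls; that anisotropy is precisely what had to be overcome for $\Bad(\mathbf w)$ in \cite{BNY21}.

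The height-function contraction inequality is an average against Haar measure on $H$, and small Lebesgue measure of the escaping set does not give a legal move. In a hyperplane potential game Alice's neighbourhoods must satisfy $\sum r_i^{\gamma}\le(\beta\rho)^{\gamma}$ for arbitrarily small $\gamma>0$. Covering a set of small Lebesgue measure by $N$ balls of radius $r$ costs about $Nr^{\gamma}$, which is not small. Moreover, an HAW set must meet the support of every compactly supported Ahlfors regular absolutely decaying measure, so Lebesgue control alone cannot suffice.

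More seriously, your dichotomy ``nearly linear, or small measure'' omits the degenerate alternative of quantitative nondivergence, Proposition~\ref{P:nondivergence}(ii). There, some rational decomposable $\xi$ satisfies $\sup_{z\in B}\|\bigwedge^r(D\tau(h(z)))\xi\|<\rho$ on the entire current ball. The dangerous set is then the whole ball: it is neither near a hyperplane nor small in measure, and nothing in your plan stops Bob from reaching such a ball.

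The missing idea is the paper's historical alternative, Proposition~\ref{P:history}. One plays a Cantor potential game on the support of an arbitrary Ahlfors regular absolutely decaying measure, with dangerous sets $\cD_q^{(k,\ell)}$ defined through the modified flows $D^{(k,\ell)}_{t,Y}=A_{k,\ell}B_{t,Y}$. Alternative (ii) at stage $q$ is then ruled out in two ways.
\begin{enumerate}
\item[(a)] For $q\le uh$ it is impossible, by the global estimate (Proposition~\ref{P:global-estimate}), which uses Liouville's inequality on algebraic coefficients.
\item[(b)] For $q>uh$ it forces $\cK\cap B_q\subseteq\cD_{q-uh}^{(k+h,\ell)}\cup\cD_{q-uh}^{(k,\ell+h)}$, a set Alice has already deleted. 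This is the local estimate (Proposition~\ref{P:local-estimate}). It rests on the rational representation of Proposition~\ref{P:rank-one-power}, in which $\mathrm{d}\tau(E_\theta)^{2m}$ has rank one, so shortness along the central highest-root shear becomes a product shrink.
\end{enumerate}
Only once (ii) is excluded do the measure bounds feed, via the covering count, into Theorem~\ref{T:BNY21} to yield HAW. The rank-one case is also not handled by a Margulis function. It uses the thin/separated criterion of Lemma~\ref{L:thin-separated-haw}, built on Dani's cusp geometry, including the BCH computation that places each dangerous set near an affine hyperplane when $N$ is non-abelian.
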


In particular, in view of the similar properties of HAW sets as (W1)--(W2) (Lemma \ref{L:HAW-Ra}), the following corollary is immediate:

\begin{corollary}
    Let $G,\Gamma,X$ be as in Conjecture \ref{C:AGK}, and let $(F_n)_{n\in\NN}$ be a sequence of one-parameter $\Ad$-diagonalizable subgroups of $G$. Then the intersection $$\bigcap_{n\in\NN}E_X(F_n)
    =\{x\in X:F_nx\text{ is bounded for all }n\in\NN
    \}$$ is HAW on $X$, and hence is thick in $X$.
\end{corollary}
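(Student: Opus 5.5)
The plan is to deduce the corollary directly from Theorem \ref{T:main-homo} together with the stability properties of HAW sets recorded in Lemma \ref{L:HAW-Ra}. No new dynamics is needed: everything reduces to the analogues of (W1) and (W2) for the hyperplane absolute game on the manifold $X$.

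First, for each $n\in\NN$, the subgroup $F_n$ is a one-parameter $\Ad$-diagonalizable subgroup of $G$, and $\Gamma$ is a lattice in $G$. Thus the hypotheses of Conjecture \ref{C:AGK} hold with $F=F_n$, and Theorem \ref{T:main-homo} shows that $E_X(F_n)$ is HAW on $X$ for every $n$.

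Second, I would invoke the countable intersection property from Lemma \ref{L:HAW-Ra}. This gives that
$$\bigcap_{n\in\NN}E_X(F_n)$$
is HAW on $X$. Since HAW on a manifold is defined through charts (HAW in every local coordinate patch), I would check that the intersection property is applied chart by chart. In a fixed chart, each set $E_X(F_n)$ is HAW, so their countable intersection is HAW there. This holds for every chart, so the intersection is HAW on $X$.

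Third, the identification
$$\bigcap_{n\in\NN}E_X(F_n)=\{x\in X: F_nx\text{ is bounded for all } n\in\NN\}$$
is immediate from the definition of $E_X(F)$. Thickness then follows from the (W1)-type statement in Lemma \ref{L:HAW-Ra}: an HAW subset of a Riemannian manifold meets every nonempty open set in a set of full Hausdorff dimension.

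There is no real obstacle, since all the difficulty is contained in Theorem \ref{T:main-homo}. The only point needing care is that "HAW on $X$" is a chart-local notion. The intersection and thickness properties should therefore be applied in local charts, where they hold for subsets of Euclidean space and pass to $X$. I expect Lemma \ref{L:HAW-Ra} to be stated in exactly this manifold form, in which case the argument is a two-line citation.
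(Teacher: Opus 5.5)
Your proposal is correct and is the same argument the paper has in mind: apply Theorem~\ref{T:main-homo} to each $F_n$, then use the countable-intersection and thickness properties of HAW sets. One small correction: Lemma~\ref{L:HAW-Ra} is stated only for $\RR^a$, and the manifold-level statements you need, obtained by exactly your chart-by-chart check, are Lemma~\ref{L:HAW-mnfd}~(ii) and~(i).
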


One may also formulate an equivalent version of Conjecture \ref{C:AGK} on the expanding horospherical subgroup with respect to a chosen direction of the one-parameter $\Ad$-diagonalizable subgroup $F$. 

 Write
\begin{equation}\label{E:flow-horosphere}
    F^+:=\{g_t:t\geq0\},\qquad
    H(F^+):=\{h\in G:g_{-t}hg_t\to e\text{ as }t\to+\infty\}.
\end{equation}
The group $H=H(F^+)$ is the \textsl{expanding horospherical subgroup} associated with $F^+$. For any $x\in X$, we consider the set
\begin{equation}\label{E:bounded-sets}
    E_{H,x}(F^+):=\{h\in H:hx\in E_X(F^+)\}.
\end{equation}
Then we will deduce Theorem \ref{T:main-homo} from the following conjectured result (see \cite[Conjecture 7.2]{AGK}):

\begin{theorem}\label{T:main-H}
    Let $G,\Gamma,X$ be as in Conjecture \ref{C:AGK}, $F^+$ a one-parameter $\Ad$-diagonalizable subsemigroup of $G$, and let $H=H(F^+)$. Then for any $x\in X$, the set $E_{H,x}(F^+)$ is HAW on $H$.
\end{theorem}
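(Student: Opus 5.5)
We propose to prove Theorem \ref{T:main-H} with a Cantor-potential-game argument built on a quantitative non-divergence estimate. We organize it in four steps.

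\textbf{Step 1: reductions.} We first pass to the essential case. Replacing $F^+$ by its $\Ad$-$\RR$-diagonalizable part changes neither boundedness of orbits nor $H(F^+)$, since the compact part acts by a bounded conjugation. Using the Levi decomposition and the arithmeticity and reduction theory of lattices (via the structure of $G/\Gamma$ through its radical and semisimple quotient), we reduce to $G$ semisimple without compact factors and $\Gamma$ irreducible, or to a product of such. HAW is preserved under products in the appropriate sense and under $C^1$ diffeomorphisms (Lemma \ref{L:HAW-Ra}).

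\textbf{Step 2: coordinates and expansion.} In exponential coordinates $H\cong\fh=\bigoplus_\lambda \fh_\lambda$, graded by positive eigenvalues of $\ad$ of the generator. Conjugation by $g_t$ expands $\fh_\lambda$ by $e^{\lambda t}$. Since this expansion is non-conformal, we play on quasi-norm balls adapted to the grading, as in Guan--Wu. The quasi-norm balls are handled through the Cantor potential game, which is known to imply HAW for such weighted Cantor structures.

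\textbf{Step 3: the cusp-avoidance mechanism.} For each time step $t_k=k\tau$, we ensure that $g_{t_k}hx$ stays in a fixed compact set $X_\varepsilon$. We describe the cusp via the Eskin--Margulis--Mozes / Benoist--Quint height function, or via the Kleinbock--Margulis flags of $\Gamma$-rational unipotent radicals. The bad sets of $h$ in a ball where $g_{t_k}hx$ enters the cusp are then neighborhoods of the zero loci of polynomial maps $h\mapsto \|g_{t_k}h\,v\|$, where $v$ ranges over the discrete set of primitive vectors in representations $V_i$ attached to maximal parabolics. The key lemma needed is that, at each scale, these dangerous regions are contained in neighborhoods of finitely many proper affine subspaces of $\fh$ (in adapted coordinates). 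Each subspace has thickness comparable to the current radius times $\beta$, and there is a summable potential weight. Discreteness plus the covolume/height estimates control how many vectors can be simultaneously short, a "one dangerous vector per scale" statement in the spirit of the simplex lemma. Polynomial maps on $H$ restricted to unipotent $H$ have level sets which, at the smallest scale, are close to hyperplanes in the fastest expanding direction, so Alice deletes neighborhoods of hyperplanes.

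\textbf{Step 4: winning.} We then assemble the potential-game strategy. A point surviving all stages has $g_{t_k}hx\in X_\varepsilon$ for all $k$, hence $F^+hx$ is bounded. The main obstacle is Step 3: obtaining, uniformly for all $\Ad$-diagonalizable $F$ and all $G$, the counting statement that short vectors at a given scale cluster near few hyperplanes. This is the point where earlier works used rank-specific geometry. Our plan is to use Margulis-type height functions with a contraction inequality $A_\tau f\le c f+b$ to bound the number of dangerous events per scale, and to linearize each event to a hyperplane neighborhood.
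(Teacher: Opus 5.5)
Your Step 1 broadly matches the paper's reduction: drop the compact part of $F^+$, quotient by the maximal connected amenable normal subgroup, pass to finite covers and to products of irreducible lattices. Note, though, that real rank one must be handled separately because the lattice need not be arithmetic; the paper does this with a rigorous version of Dani's argument in Appendix~\ref{A:Dani}. In higher rank, Margulis arithmeticity is what makes the rest of the paper's argument possible.

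The genuine gap is Step~3, which you yourself flag as the main obstacle. Your key lemma says that at each scale the dangerous region in Bob's ball lies in neighborhoods of boundedly many affine hyperplanes of width about $\beta\rho$, with ``one dangerous vector per scale''. This is asserted, not proved, and no such general statement is known. Obtaining it is essentially the content of the conjecture, and case-specific versions of it are what limited earlier work to special groups and weights. The dangerous set is a union, over infinitely many rational vectors, of sublevel sets of polynomial maps on $H$ under a non-conformal expansion. Nothing forces these sets to be flat to within $\beta\rho$ on Bob's ball, or bounds how many of them are active at a given scale.

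A Margulis/EMM height function with $A_\tau f\le cf+b$ does not fill this gap. It controls averages of $f$ against a fixed measure, and it neither counts nor linearizes events. Above all, it says nothing about the degenerate situation in which Bob has steered into a tiny ball on which some rational subspace is \emph{uniformly} short at the relevant time. Separately, the claim in Step~2 that a Cantor game on quasi-norm balls is ``known to imply HAW'' is unsupported: HAW is a statement about Euclidean balls and hyperplane neighborhoods.

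The paper never linearizes. It uses the Beresnevich--Nesharim--Yang criterion (Theorem~\ref{T:BNY21}): it suffices that the target set meets the support of every compactly supported Ahlfors regular absolutely decaying measure $\nu$. This follows if its trace on $\cK=\supp\nu$ is $\alpha'$-Cantor winning with $\alpha'<\alpha$. Alice therefore deletes many small Euclidean balls, counted through $\nu$-measure bounds from quantitative non-divergence (Proposition~\ref{P:nondivergence}). The thresholds $b^{-\eta(k+\ell)}$ shrink with the look-ahead depth $u(k+\ell)$. With a single fixed $X_\varepsilon$, the number of balls needed at depth $i$ would be of order $\beta^{-\alpha(i+1)}$, which exceeds the allowed $\beta^{-\alpha'(i+1)}$.

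The heart of the proof is ruling out alternative (ii) of non-divergence, which is exactly the degenerate situation above. A global estimate (Proposition~\ref{P:global-estimate}) excludes it at early stages, using Liouville's inequality for the algebraic coefficients available in the arithmetic setting. A local estimate (Proposition~\ref{P:local-estimate}) handles later stages. It is built on a rational representation $\tau$ in which $\mathrm{d}\tau(E_\theta)^{2m}=c\,\be\bff^{\top}$ has rank one, together with the shear along the central highest-root direction, Markov's inequality, and the auxiliary diagonal factors $A_{k,\ell}$. It shows that otherwise $\cK\cap B_q$ already lies in an earlier dangerous set $\cD^{(k+h,\ell)}_{q-uh}\cup\cD^{(k,\ell+h)}_{q-uh}$, so the current dangerous set is empty (Proposition~\ref{P:history}). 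Your plan contains no substitute for this mechanism, so as it stands it does not prove the theorem.
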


In particular, the following corollary is immediate:

\begin{corollary}\label{C:intersection}
    Let $G,\Gamma,X$ be as in Conjecture \ref{C:AGK}, and let $(F^+_n)_{n\in\NN}$ be a sequence of one-parameter $\Ad$-diagonalizable subsemigroup of $G$ with a common expanding horospherical subgroup $H=H(F^+_n)$. Then for any $x\in X$, the set $$
    \bigcap_{n\in\NN}E_{H,x}(F_n^+)=\{
    h\in H:F_n^+hx\text{ is bounded in }X \text{ for all }n\in\NN
    \}
    $$
    is HAW on $H$, and hence is thick in $H$.
\end{corollary}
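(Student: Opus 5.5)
The corollary should follow formally from Theorem \ref{T:main-H} together with the countable intersection property of HAW sets (Lemma \ref{L:HAW-Ra}). The plan has two steps.

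\textbf{Step 1: each set in the intersection is HAW.} Fix $x\in X$. For each $n\in\NN$, the subsemigroup $F_n^+$ is a one-parameter $\Ad$-diagonalizable subsemigroup of $G$ with $H(F_n^+)=H$. So Theorem \ref{T:main-H} applies to the triple $(G,\Gamma,F_n^+)$ and gives that $E_{H,x}(F_n^+)$ is HAW on $H$.

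The only point to check is that the HAW notion on $H$ is the same for every $n$. By definition it depends only on the Lie group $H$, with its fixed smooth structure and a fixed right-invariant Riemannian metric, or equivalently on charts via $\exp$ on $\Lie(H)$. It does not depend on the flow used to produce the set. Since $H$ is common to all $n$, all the sets $E_{H,x}(F_n^+)$ are HAW subsets of one and the same space.

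\textbf{Step 2: intersection and thickness.} By the countable intersection property in Lemma \ref{L:HAW-Ra}, which is the HAW analogue of (W2), the set $\bigcap_n E_{H,x}(F_n^+)$ is HAW on $H$. By the thickness property in Lemma \ref{L:HAW-Ra}, the analogue of (W1), a HAW subset of a manifold meets every nonempty open set in a set of full Hausdorff dimension. Hence the intersection is thick in $H$.

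Finally, the set-theoretic description follows directly from \eqref{E:bounded-sets}. We have $h\in E_{H,x}(F_n^+)$ exactly when $hx\in E_X(F_n^+)$, that is, when $F_n^+hx$ is bounded in $X$.

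There is no real obstacle here: all the difficulty lies in Theorem \ref{T:main-H}. The only care needed is the uniformity remark in Step 1, namely that the HAW property is intrinsic to $H$ and so the countable intersection lemma applies to a single ambient space.
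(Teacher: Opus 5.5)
Your proof is correct and matches the paper, which treats the corollary as immediate from Theorem \ref{T:main-H} together with the countable-intersection and thickness properties of HAW sets. One small point: since $H$ is a manifold rather than $\RR^a$, the precise references are Lemma \ref{L:HAW-mnfd}(i)--(ii) (which your chart-independence remark in Step 1 is in effect invoking) rather than Lemma \ref{L:HAW-Ra}.
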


\subsection{Bounded geodesics on locally symmetric spaces}\label{SS:geo} A connected complete Riemannian manifold $M$ is \textsl{locally symmetric}, if for every $p\in M$, its geodesic symmetry (which fixes the point $p$ and reverses geodesics through $p$) is a local isometry. It is \textsl{of non-compact type} if the isometry group of its universal cover is a semisimple Lie group without compact factors.

Let $M$ be a locally symmetric space of non-compact type with finite Riemannian volume. For any $p\in M$, let $S_p(M)$ denote the unit sphere centered at the origin in the tangent space at $p$. For any $p\in M$ and any $\xi\in S_p(M)$, let $\gamma(p,\xi)$ denote the geodesic ray through the base point $p$ in the direction $\xi$. Of particular interest is the following subset of $S_p(M)$:
	$$
	E^+(p,\infty):=\{\xi\in S_p(M): \gamma(p,\xi) \text{ is bounded in } M\}.
	$$
Following \cite[Corollary 5.5]{KM96}, Guan and the author \cite{GW26} showed that for any $p\in M$, the set $E^+(p,\infty)$ is thick in $S_p(M)$. They also raised the following stronger conjecture (see \cite[Conjecture 1.2]{GW26}):

\begin{conjecture}\label{C:geo}
    Let $M$ be a locally symmetric space of non-compact type with finite Riemannian volume. Then for any $p\in M$, the set $E^+(p,\infty)$ is HAW on $S_pM$.
\end{conjecture}

Conjecture \ref{C:geo} was only proved in some special cases: \begin{enumerate}
    \item[(i)] (Dani \cite{Dani86}, McMullen \cite{McM10}) $M=\SO(n)\backslash \SO^0(n,1)/\Gamma$, where $\Gamma$ is a torsion-free nonuniform lattice.
    \item[(ii)] (Guan--Wu \cite{GW26}) $M=\SO(3)\backslash \SL_3(\RR)/\Gamma$, where $\Gamma$ is a torsion-free subgroup commensurable with $\SL_3(\ZZ)$.
\end{enumerate}
This paper solves Conjecture \ref{C:geo} in full generality.

\begin{theorem}\label{T:main-bi}
    Conjecture \ref{C:geo} holds.
\end{theorem}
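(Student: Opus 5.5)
We will reduce Theorem \ref{T:main-bi} to Theorem \ref{T:main-H}, applied to a suitable family of horospherical subgroups, together with the standard invariance properties of HAW sets. The invariance properties we need are: HAW is preserved under $C^1$ diffeomorphisms between manifolds, countable intersections of HAW sets are HAW, and HAW is a local property (Lemma \ref{L:HAW-Ra} and its companions in Section \ref{S:pre}).

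\emph{Setup.} Write $M=K\backslash G/\Gamma$ and lift $p$ to $g_0\Gamma$, with $p=Kg_0\Gamma$. Fix a Cartan decomposition $\fg=\fk\oplus\fp$. Identify $S_pM$ with the unit sphere of $\fp$ via $\Ad(g_0)$. A unit vector $\xi\in\fp$ determines the geodesic ray
$$t\mapsto K\exp(t\xi)^{-1}g_0\Gamma$$
(with the appropriate left/right convention). Since $K$ is compact, this ray is bounded in $M$ if and only if $\{\exp(-t\xi)g_0\Gamma:t\ge0\}$ is bounded in $X=G/\Gamma$. Each $\exp(\RR\xi)$ is $\Ad$-$\RR$-diagonalizable. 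Fix a Cartan subspace $\fa\subseteq\fp$ and a closed positive Weyl chamber $\overline{\fa^+}$. Every unit $\xi$ can be written as $\xi=\Ad(k)v$ with $k\in K$ and $v\in\overline{\fa^+}$ a unit vector. Partition the unit sphere of $\overline{\fa^+}$ according to which walls contain $v$, i.e. according to the face, equivalently the standard parabolic $P_I$.

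\emph{Local reduction.} Fix one face $I$ and one $v$ in the relative interior of that face. The orbit map $K\to S_pM$, $k\mapsto\Ad(k)v$, factors through $K/Z_K(v)$. This quotient is diffeomorphic to the flag variety $G/P_I$, and the diffeomorphism is locally modeled by the opposite unipotent radical $N_I^-$. Concretely, near $\Ad(k_1)v$ we use the map
$$u\mapsto \Ad(k(uk_1))\,v,\qquad u\in H:=k_1^{-1}N_I^+k_1,$$
where $uk_1=k(uk_1)\,a\,n$ is the Iwasawa decomposition. This map is a local diffeomorphism from $H$ onto the $K$-orbit of $v$.

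The key observation is that $H$ is the expanding horospherical subgroup $H(F^+)$ of the semigroup $F^+=\{\exp(-t\Ad(k_1)v)\}$. Moreover, for $u\in H$, the ray in direction $\Ad(k(uk_1))v$ stays within bounded distance in $G$ of the orbit $F^+u\,g_0\Gamma$ (up to a fixed conjugation). The reason is that $k(uk_1)$ differs from $uk_1$ by an element of $P_I$, and the $P_I$ part is contracted or kept bounded along $F^+$: the $AN$ factor acts through a bounded Levi component, which commutes with $F^+$ modulo the contracted unipotent part. Hence boundedness is equivalent along the two trajectories. Theorem \ref{T:main-H} then shows that the relevant set of directions is HAW on an open piece of the $K$-orbit of $v$.

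\emph{Passing to the whole sphere.} The remaining issue is that $S_pM$ is a union of $K$-orbits of varying dimension as $v$ varies in $\overline{\fa^+}$. HAW on each orbit separately does not immediately give HAW on the sphere. I would handle this by a fibered argument: use coordinates $(u,v)\mapsto \Ad(k(uk_1))v$ on an open set of the sphere, where $u$ ranges over the horospherical subgroup $N^+$ of a regular element and $v$ ranges over a neighborhood in $\fa$. Along the fibers where $v$ is fixed, Theorem \ref{T:main-H} applies; in fact, Corollary \ref{C:intersection} applies uniformly, since all regular $v$ in one chamber share the same $H=N^+$. One then needs a product/fibration lemma for the hyperplane absolute game. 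Alternatively, one replays the proof of Theorem \ref{T:main-H} with the flow parameter allowed to vary over a compact set of directions, which should be possible if the main argument is uniform in $F$. Singular directions form a lower-dimensional subset and need their own treatment with $N_I$ in place of $N^+$.

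\emph{Main obstacle.} This last step is where I expect the difficulty to lie. The hyperplane absolute game in the $\fa$-directions requires avoiding neighborhoods of hyperplanes that may be transverse to the fibers. The approach I would try first is to show that the bounded-trajectory property is insensitive to small changes of $v$ within $\fa$ at a controlled scale: for directions $v,v'$ with $|v-v'|\le\rho$, the trajectories up to time $\sim\log(1/\rho)$ stay close. This should make Bob's radial choices harmless, so that Alice plays the horospherical game in the $u$-coordinates while essentially ignoring the $\fa$-coordinate. Together with the uniform-in-$F$ version of Theorem \ref{T:main-H}, this would complete the argument.
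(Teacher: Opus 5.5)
There is a genuine gap, exactly at the step you flag as the main obstacle. Your fibered coordinates are essentially the paper's. On regular directions it uses $\Theta(v,Lk)=\Ad(k^{-1})v$ on $\cS\times L\backslash K$ together with the big-cell factorization $kg=p(k)u(Lk)$, $p(k)\in P^-$. Since $\exp(tv)p(k)\exp(-tv)$ stays bounded for $t\geq 0$, the set of directions with bounded geodesic ray corresponds, on $\cS\times Q$, to $\Pi_{x_0}=\{(v,h)\in\cS\times N^+:\{\exp(tv)h\Gamma:t\geq 0\}\text{ is bounded}\}$.

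The entire content is then that $\Pi_{x_0}$ is HAW on the \emph{product} $\cS\times N^+$. This is Theorem \ref{T:semisimple}(ii), which rests on Theorem \ref{T:para-winning}, and nothing you cite gives it:
\begin{itemize}
\item Theorem \ref{T:main-H} only says each fiber $\{h:(v,h)\in\Pi_{x_0}\}$ is HAW for a fixed $v$.
\item Corollary \ref{C:intersection} handles only countably many directions, and it produces subsets of $N^+$, not of the product.
\item There is no fibration principle for the hyperplane absolute game; Lemma \ref{L:HAW-mnfd}(v) covers only sets of the form $S\times N$.
\end{itemize}
Bob's balls in the product determine $v$ only in the limit. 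So every deletion Alice makes must work simultaneously for all directions in the current ball. ``Playing the horospherical game while ignoring the $\fa$-coordinate'' is therefore not yet a strategy, and ``the main argument should be uniform in $F$'' is an assumption, not an argument.

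Your heuristic is the right one, but carrying it out is the paper's main technical theorem. It needs specific inputs beyond Theorem \ref{T:main-H}:
\begin{enumerate}
\item A single representation $\tau$ and root vector $E_\theta$ with $(\mathrm{d}\tau(E_\theta))^{2m}$ of rank one must serve all directions at once. This forces working on regions where one fixed highest root $\theta$ is maximal, and normalizing $\theta(Y)=1$.
\item All exterior weights must have constant sign on those regions. Only then are the projections $\pi_{r,+}$ and the global estimate (Proposition \ref{P:global-estimate}) uniform in $Y$.
\item The game is run on the product, as a Cantor potential game on supports of Ahlfors regular absolutely decaying measures on $\Omega\times\fh$ (Theorem \ref{T:BNY21}). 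The dangerous sets $\cD_q^{(k,\ell)}$ are defined in the product, and the direction is frozen at the center $\omega_q$ of Bob's current ball.
\item Your ``controlled scale'' is made precise by Lemma \ref{L:rho_0_0}. Using times $t=q+u(k+\ell)$ gives $t\rho_q\ll k+\ell$. Hence replacing $Y(\omega)$ by $Y(\omega_q)$ costs only a factor $b^{\eta(k+\ell)/2}$, which the threshold $b^{-\eta(k+\ell)}$ absorbs.
\end{enumerate}

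Finally, singular directions need no separate treatment with $N_I$. The singular set, the walls where two highest roots tie or an exterior weight vanishes, and the complement of the big cell are all finite unions of lower-dimensional submanifolds. Lemma \ref{L:HAW-mnfd}(iv) therefore discards them. HAW along lower-dimensional orbits would not contribute to HAW on the sphere in any case.
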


Now we briefly review some basic facts concerning locally symmetric spaces (see \cite[Chapter II]{BM00} for details). Let $M$ be a locally symmetric space of non-compact type, and let $\widetilde{M}$ denote its universal cover. The isometry group $G$ of $\widetilde{M}$ has finitely many connected components, whose identity component is a connected semisimple Lie group without compact factors and with trivial center. Fix a point $p_0\in M$, and let $\widetilde{p_0}\in\widetilde{M}$ be a preimage of $p_0$. The stabilizer $K:=\mathrm{Stab}_G(\widetilde{p_0})$ is a maximal compact subgroup of $G$. We identify the globally symmetric space $\widetilde{M}$ with $K\backslash G$, and view the fundamental group $\Gamma:=\pi_1(M)$ as a torsion-free discrete subgroup of $G$ via deck transformations. Then $M$ can be identified with $K\backslash G/\Gamma$. 

Conversely, given any semisimple Lie group $G$ without compact factors and with finite center, any maximal compact subgroup $K\leq G$ and any torsion-free discrete subgroup $\Gamma\leq G$, there exists a unique locally symmetric space structure on $K\backslash G/\Gamma$. Moreover, $M$ has finite Riemannian volume if and only if $\Gamma$ is a lattice in $G$, and the rank of $M$ is the rank of $G$.

Write $\fg=\Lie(G)$ and $\fk=\Lie(K)$. The Cartan decomposition gives an $\Ad(K)$-invariant complementary subspace $\fp\subseteq\fg$ to the subalgebra $\fk$, and a diffeomorphism $$
	K\times \fp\to G,\quad (k,v)\mapsto k\cdot\exp(v).
	$$
	It follows that any tangent space of $K\backslash G$ (and hence, any tangent space of $M$) is modeled by $\fp$, and the Riemannian metric on it gives an $\Ad(K)$-invariant inner product on $\fp$. Write $S(\fp):=\{v\in\fp: \|v\|=1\}$. For each point $p=[Kg\Gamma]\in M$, the geodesic ray in $M$ through $p$ in the direction $\xi$ can be written as $$
	\gamma(p,\xi)=\{K\exp(tv)g\Gamma: t\geq 0\}\qquad \text{for some }v\in S(\fp).
	$$
	It follows that 
	\begin{equation}\label{E:unit-identify}
			E^+(p,\infty)\cong\{v\in S(\fp): \{K\exp(tv)g\Gamma: t\geq 0\} \text{ is bounded in }M\}.
	\end{equation}

In particular, in view of the similar properties of HAW sets as (W1)--(W2) (Lemma \ref{L:HAW-Ra}), the following corollary is immediate:

\begin{corollary}
    Let $M=K\backslash G/\Gamma$ be as above and let
$(g_n)_{n\in\NN}$ be any sequence in $G$.  Then the set
\begin{equation*}
 \{v\in S(\fp):
       \{K\exp(tv)g_n\Gamma:t\geq0\}\text{ is bounded in } M \text{ for all }n\in\NN\}.
\end{equation*}
is HAW on $S(\fp)$, and hence is thick in $S(\fp)$.
\end{corollary}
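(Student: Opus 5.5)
The corollary is a formal consequence of Theorem \ref{T:main-bi} together with the countable intersection property of HAW sets in Lemma \ref{L:HAW-Ra}. I would first reduce each individual set to one covered by Theorem \ref{T:main-bi}. Fix $n\in\NN$ and put $p_n:=[Kg_n\Gamma]\in M$. By the description of geodesic rays preceding \eqref{E:unit-identify}, the geodesic ray from $p_n$ in the direction corresponding to $v\in S(\fp)$ is $\{K\exp(tv)g_n\Gamma:t\geq0\}$. Hence, under the identification \eqref{E:unit-identify} of $S_{p_n}M$ with $S(\fp)$, we have
\begin{equation*}
A_n:=\{v\in S(\fp):\{K\exp(tv)g_n\Gamma:t\ge0\}\text{ is bounded in }M\}\cong E^+(p_n,\infty).
\end{equation*}
The identification is made by right translation by $g_n$ and the Cartan coordinates at the coset $Kg_n$. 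It is an isometry between $S(\fp)$, with the $\Ad(K)$-invariant inner product, and the unit tangent sphere $S_{p_n}M$, since the metric on $K\backslash G$ is $G$-invariant.

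The next step is to transfer the winning property. HAW is preserved under $C^1$ diffeomorphisms of manifolds, and in particular under isometries; I take this to be part of the standard properties collected in Lemma \ref{L:HAW-Ra}, or otherwise immediate from the definition of the game on manifolds via charts. Theorem \ref{T:main-bi} then gives that each $A_n$ is HAW on $S(\fp)$. One point needs care. Different $g_n$ representing the same point $p_n$ differ by left multiplication by some $k\in K$, and this changes the identification by $\Ad(k)$. Since $\Ad(k)$ is an isometry of $S(\fp)$, the HAW conclusion is unaffected.

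Finally, the set in the corollary is exactly $\bigcap_{n\in\NN}A_n$. By the countable intersection property of HAW sets (the analogue of (W2) in Lemma \ref{L:HAW-Ra}), it is HAW on $S(\fp)$. The analogue of (W1) in the same lemma then gives that it is thick in the Riemannian manifold $S(\fp)$.

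The only step that is not purely formal is checking that \eqref{E:unit-identify} holds uniformly in the base point. This means the sets $A_n$ must all live on the same fixed sphere $S(\fp)$ rather than on varying tangent spheres, so that the intersection makes sense. The identification above settles this.
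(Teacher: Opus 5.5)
Your proposal is correct and follows the paper's route: each $A_n$ is HAW on $S(\fp)$ by Theorem \ref{T:main-bi}, and the countable-intersection and thickness properties finish the argument. The paper's proof of Theorem \ref{T:main-bi} already works directly with the set $\{v\in S(\fp):\{K\exp(tv)g\Gamma:t\ge0\}\text{ bounded}\}$ for a fixed $g$, so your check that the conclusion does not depend on the representative $g_n$ is harmless but not needed; also, strictly speaking, the manifold versions of the HAW properties you use are Lemma \ref{L:HAW-mnfd}(i)--(iii) rather than Lemma \ref{L:HAW-Ra}.
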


\subsection{Organization of the paper and strategy of the proof}\label{SS:strategy}
The whole paper is organized as follows. In Section \ref{S:pre}, we recall some preliminary knowledge on Schmidt’s game and two of its variants: the hyperplane absolute game and Cantor potential game, especially their winning criterion (see Theorem \ref{T:BNY21}). We also recall the quantitative non-divergence result (see Proposition \ref{P:nondivergence}) for real analytic actions with respect to an Ahlfors regular, absolutely decaying measure. These useful tools have been employed in the previous papers \cite{BNY21,BNY22}.

In Section \ref{S:representation}, we recall Mahler's compactness criterion for general $\QQ$-algebraic homogeneous spaces, and construct the crucial rational representation of a $\QQ$-simple algebraic group from its adjoint representation. The key property of this representation is that \textsl{some power of a root operator has rank one}.

In Section \ref{S:global-local-estimates}, we give the global and local estimates around the $r$-vector $$
\bigwedge^r\left(D^{(k,\ell)}_{t,Y}\tau(h(z))\right)\xi.
$$
Compared to the estimates in \cite[Sections 3--4]{Wu26}, the arguments in this paper, albeit following the same philosophies, are more complicated due to the involvement of the key rational representation constructed above. Readers may refer to \cite[Section 1.3]{Wu26} for the original ideas behind these arguments.

In Section \ref{S:para-winning}, we state and prove Theorem \ref{T:para-winning}, a technical result of independent interest that constitutes the main contribution of this paper. Basically, it establishes a winning property on the product of a parameter space and its corresponding expanding horospherical subgroup, from which all the main theorems in Section \ref{S:introduction} will follow.

In Section \ref{S:reductions}, we present complete proofs of all main results in this paper, by performing the standard reductions as in \cite[Section 5]{KM96} and \cite[Sections 3--4]{GW26}. Here a detailed proof of the rank-$1$ semisimple case is included in Appendix \ref{A:Dani}.

\section{Preliminaries}\label{S:pre}

\subsection{Schmidt's game and the hyperplane absolute game}\label{SS:Schmidt} In this subsection, we recall the definitions and properties of Schmidt's game and the hyperplane absolute game.

\begin{definition}
Schmidt's game \cite{Sch66} involves two parameters $\alpha,\beta\in(0,1)$  and is played by two players Alice and Bob on a complete metric space $(X,\dist)$ with a target set $S\subset X$. Bob starts the game by choosing a closed ball $B_0$ of radius $\rho_0$. After Bob chooses a closed ball $B_q$ of radius $\rho_q$, Alice chooses a ball $A_q\subseteq B_q$ of radius ${\rho}'_q=\alpha {\rho}_q$, and then Bob chooses a ball $B_{q+1}\subseteq A_q$ of radius ${\rho}_{q+1}=\beta {\rho}'_{q}$. This implies that the balls are nested:
$$
	B_0 \supset A_0 \supset B_1 \supset\cdots.$$
Alice wins the game if the unique point in $\bigcap_{q=0}^\infty A_q=\bigcap_{q=0}^\infty B_q$ belongs to $S$. The set $S$ is \textsl{$(\alpha,\beta)$-winning} if Alice has a winning strategy, is \textsl{$\alpha$-winning} if it is $(\alpha,\beta)$-winning for any $\beta\in(0,1)$, and is \textsl{winning} if it is $\alpha$-winning for some $\alpha\in (0,1)$.
\end{definition}

Regarding Schmidt's game and its winning sets, Schmidt \cite{Sch66} proved that:	
\begin{itemize}
	\item[(W1)]  winning subsets of Riemannian manifolds are thick;
    \item[(W2)]  a countable intersection of $\alpha$-winning sets is again $\alpha$-winning;
    \item[(W3)]  the image of a winning set under a bi-Lipschitz map is winning.
\end{itemize}
Schmidt's game has been a powerful tool for proving thickness of intersections of certain countable families of sets, see e.g.\ \cite{An13, An16,  BBFKW10, BFK11, BFKRW12}. However, for a fixed $\alpha\in (0,1)$, the class of $\alpha$-winning subsets of a Riemannian manifold depends on the choice of the metric, and is not known to be preserved by diffeomorphisms.


One significant variant is the \textsl{hyperplane absolute game}, which has the advantage that the family of its winning sets is preserved by $C^1$ diffeomorphisms.

\begin{definition}
    Hyperplane absolute game \cite{BFKRW12} involves one parameter $\beta\in (0,\frac{1}{3})$ and is played by two players Alice and Bob on the Euclidean space $\RR^a$ with a target set $S \subset \RR^a$. Bob starts the game by choosing a closed ball $B_0$ of radius ${\rho}_0$.
For an affine hyperplane $L\subset\RR^a$ and $r>0$, we denote the $r$-neighborhood of $L$ by
$$L^{(r)}:=\{z\in\RR^a:\mathrm{dist}(z,L)\leq r\}.$$
After Bob chooses a closed ball $B_q$ of radius ${\rho}_q$, Alice chooses a hyperplane neighborhood $L_q^{(r_q)}$ with $r_q\le\beta {\rho}_q$, and then Bob chooses a closed ball $B_{q+1}\subset B_q\setminus L_q^{(r_q)}$ of radius ${\rho}_{q+1}\ge\beta {\rho}_q$. 

We say that Alice wins by default if at any time Bob has no legal move, or $\rho_q \not\to 0$ as $q\to +\infty$. Otherwise, Alice wins the game if and only if
$$\bigcap_{q=0}^{+\infty} B_q\cap S\ne\emptyset.$$ The set $S$ is \textsl{$\beta$-hyperplane absolute winning} (\textsl{$\beta$-HAW}) if Alice has a winning strategy, and is \textsl{hyperplane absolute winning} (\textsl{HAW}) if it is $\beta$-HAW for any $\beta\in(0,\frac{1}{3})$.
\end{definition}

\begin{lemma}[\cite{BFKRW12}]\label{L:HAW-Ra}
	\begin{itemize}
		\item[(i)] HAW subsets   are winning, and hence thick.
		\item[(ii)] A countable intersection of HAW subsets   is again HAW.
		\item[(iii)]   The image of an HAW set under a $C^1$ diffeomorphism $\RR^a \to \RR^a$ is HAW.
	\end{itemize}
\end{lemma}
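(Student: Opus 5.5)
The plan is to derive each of the three properties from the definition of the hyperplane absolute game, passing through Schmidt's game where needed.

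\textbf{Part (i).} Let $S$ be HAW and fix $\alpha\in(0,1)$ small, say $\alpha<\frac14$. We show that $S$ is $(\alpha,\beta)$-winning in Schmidt's game for every $\beta\in(0,1)$. Alice simulates a hyperplane absolute game with parameter $\beta'$ depending on $\alpha\beta$, using her HAW strategy there. When Bob plays a Schmidt ball $B_q$ of radius $\rho_q$, Alice's absolute strategy answers with a neighborhood $L^{(r)}$, where $r\le\beta'\rho_q$. Alice then chooses her Schmidt ball $A_q\subseteq B_q$ of radius $\alpha\rho_q$ so that it avoids $L^{(r)}$. This is possible because a ball of radius $\rho$ minus a slab of half-width $\beta'\rho$ still contains a ball of radius about $\frac{1-\beta'}{2}\rho\ge\alpha\rho$. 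Bob's next Schmidt ball $B_{q+1}\subseteq A_q$ has radius $\alpha\beta\rho_q$. If $\beta'\le\alpha\beta$, then $B_{q+1}$ is a legal move in the absolute game, since it lies in $B_q\setminus L^{(r)}$ and its radius is at least $\beta'\rho_q$. The radii tend to $0$, so the limit point lies in $S$.

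Thickness then follows from Schmidt's theorem (W1). Alternatively, one can argue directly: given an open set $U$, Bob plays a Cantor-type strategy inside $U$ with many disjoint sub-balls, and the mass distribution principle bounds the dimension from below by $a-O(\beta/|\log\beta|)$. Since HAW means winning for every $\beta$, letting $\beta\to0$ gives full dimension.

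\textbf{Part (ii).} Given HAW sets $S_1,S_2,\ldots$ and a parameter $\beta$, Alice interleaves the strategies. She splits the turns among the sets, for instance playing for $S_i$ on turns $q\equiv 2^{i-1}\pmod{2^i}$. On the other turns she plays a dummy, such as a hyperplane of radius $0$, which is allowed.

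The key point is that the subsequence of rounds devoted to $S_i$ forms a legal $\beta_i$-game, possibly with a smaller parameter. Between two consecutive $S_i$-turns, Bob's balls shrink by at least $\beta$ per step, and containment is preserved. The radius ratio between consecutive $S_i$-turns is at least $\beta^{2^i}$, which may be smaller than $\beta$. To handle this, use that $S_i$ is $\beta''$-HAW for $\beta''=\beta^{2^i}$. Alice's $S_i$-move with $r\le\beta''\rho$ is then also legal in the $\beta$-game, since $\beta''\le\beta$. Bob's later ball, inside $B\setminus L^{(r)}$ with radius at least $\beta''\rho$, is a legal reply in the $\beta''$-game.

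This monotonicity of HAW in the parameter is exactly why the definition requires winning for all $\beta$. It is the main subtle point, together with handling "win by default": if Bob's radii do not tend to $0$, Alice wins in every subgame.

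\textbf{Part (iii).} Let $\phi$ be a $C^1$ diffeomorphism and $S$ HAW. Alice pulls the game back through $\phi$. On small balls, $\phi$ is close to its affine approximation $D\phi(z_0)$, which has bounded distortion on compact sets. Images of balls are then sandwiched between balls of comparable radii, and preimages of hyperplane neighborhoods are contained in neighborhoods of hyperplanes with comparably small width, up to an error $o(\rho)$ as $\rho\to0$.

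Alice first waits a bounded number of turns, playing dummy moves, until the radius is small enough that $\phi$ is $\epsilon$-close to affine on the current ball. Thereafter she translates Bob's balls through $\phi$: she inscribes a ball in $\phi(B)$ and applies the strategy for $S$ with a smaller parameter $\beta'$. She transports the resulting hyperplane neighborhood back via $\phi^{-1}$ and enlarges it to a genuine slab, using curvature error $\le\epsilon\rho$, with width still at most $\beta\rho$. Choosing $\beta'$ and $\epsilon$ in terms of $\beta$ and the distortion constants makes all moves legal, and the limit point maps into $S$.

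The main obstacle is this bookkeeping of constants in (iii); the rest follows routinely from the definitions.
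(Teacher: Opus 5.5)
The paper gives no proof of this lemma; it is quoted from \cite{BFKRW12}, so the comparison is with the standard arguments there. Your parts (i) and (ii) are correct and are essentially those arguments. In (i), a ball of radius $\rho$ minus a slab of half-width $\beta'\rho$ still contains a ball of radius $\alpha\rho$, and Bob's Schmidt reply of radius $\alpha\beta\rho\ge\beta'\rho$ is then a legal absolute move. In (ii), the $S_i$-turns form a legal $\beta^{2^i}$-game, which is exactly where winning for every $\beta$ is used.

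Part (iii) has a genuine gap, and it is structural rather than a matter of constants. The simulated Bob must play \emph{nested} balls, and inscribing a ball in $\phi(B)$ at each turn does not achieve this once $D\phi$ is non-conformal. Take $\phi=\diag(1,\kappa^{-1})$ on $\RR^2$. Suppose Alice's slab lies elsewhere and Bob answers $B_q=B(0,\rho)$ with $B_{q+1}=B(((1-\beta)\rho,0),\beta\rho)$. Every point of $\phi(B_{q+1})$ has first coordinate at least $(1-2\beta)\rho>\rho/3$, while the largest ball in $\phi(B_q)$ is $B(0,\rho/\kappa)$. For $\kappa\ge3$ these are disjoint, so the simulated game has no legal continuation. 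Waiting does not help, since later real balls may sit anywhere in $B_q$.

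A repair that works is to use circumscribed balls $B'_j=B(\phi(c_{q_j}),\Lambda\rho_{q_j})\supseteq\phi(B_{q_j})$, where $c_q$ is the centre of $B_q$ and $\Lambda=\sup_{B_0}\|D\phi\|$. These nest automatically, because $|c_{q'}-c_q|\le\rho_q-\rho_{q'}$ for $q'>q$, and they contain $\phi(x_\infty)$. The price is that they need not avoid the simulated slab $L'^{(r')}$. To fix that, advance the simulated game only at the times $q_j$ when $\rho_q$ first drops below $\gamma\rho_{q_{j-1}}$. The radius ratio is then at least $\gamma\beta$, so you run the $\gamma\beta$-HAW strategy for $S$. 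At time $q_{j-1}$, let Alice delete a slab containing $B_{q_{j-1}}\cap\phi^{-1}\bigl(L'^{(r'+\Lambda\gamma\rho_{q_{j-1}})}\bigr)$. By linearisation its half-width is at most $\Lambda'\bigl(\Lambda\gamma(1+\beta)+\epsilon\bigr)\rho_{q_{j-1}}$, where $\Lambda'=\sup_{B_0}\|D\phi(x)^{-1}\|$; this is $\le\beta\rho_{q_{j-1}}$ for small $\gamma$ and $\epsilon$. Since $c_{q_j}$ lies outside this slab and $\Lambda\rho_{q_j}<\Lambda\gamma\rho_{q_{j-1}}$, the ball $B'_j$ misses $L'^{(r')}$.

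A minor point: the initial wait is finite only when $\rho_q\to0$, not bounded in advance; if $\rho_q\not\to0$, Alice wins by default anyway.
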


The notion of HAW sets has been extended to subsets of $C^1$ manifolds in \cite{KW15}.
	This is done in two steps. First one defines the hyperplane absolute game on an open subset $U\subset \RR^a$. It is defined just as the hyperplane absolute game on
	$\RR^a$, except for requiring that Bob's first move $B_0$ be
	contained in $U$. If Alice has a winning strategy, we say that $S$ is {\sl HAW on $U$}.
	Now let $M$ be an $a$-dimensional $C^1$ manifold, and let $\{(U_\alpha, \varphi_\alpha)\}$ be a $C^1$ atlas, that is, $\{U_\alpha\}$ is an open cover of $M$, and each $\varphi_\alpha$ is a $C^1$ diffeomorphism from $U_\alpha$ onto the open subset $\varphi_\alpha(U_\alpha)$ of $\RR^a$. A subset $S\subset M$ is said to be \textsl{HAW on $M$} if for each $\alpha$, $\varphi_\alpha(S\cap U_\alpha)$ is HAW on $\varphi_\alpha(U_\alpha)$. This definition is independent of the choice of the atlas (see \cite[Corollary 3.4]{KW15}).
	
	\begin{lemma}\label{L:HAW-mnfd}
		\begin{itemize}
			\item[(i)]  HAW subsets of a $C^1$ manifold are thick.
			\item[(ii)]  A countable intersection of HAW subsets of a $C^1$ manifold is again HAW.
			\item[(iii)] Let $f:M\to N$ be a diffeomorphism between $C^1$ manifolds, and let $S\subset M$ be an HAW subset of $M$. Then $f(S)$ is an HAW subset of $N$.
            \item[(iv)] Let $M$ be a $C^1$ manifold, and $M'\subset M$ a finite closed union of $C^1$ submanifolds of strict lower dimensions. Then $S\subseteq M\setminus M'$ is HAW on $M\setminus M'$ if and only if it is HAW on $M$.
            \item[(v)] Let $M, N$ be $C^1$ manifolds, and let $S\subset M$ be an HAW subset of $M$. Then $S\times N$ is an HAW subset of $M\times N$.
		\end{itemize}
	\end{lemma}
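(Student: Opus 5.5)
This is a list of standard properties, so I would prove each item by reducing to the Euclidean statements of Lemma \ref{L:HAW-Ra}, using the chart-wise definition of HAW on manifolds from \cite{KW15} together with its independence of the atlas (\cite[Corollary 3.4]{KW15}).

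\textbf{Items (i)--(iii).} For (i), let $V\subset M$ be a nonempty open set. Choose a chart $(U_\alpha,\varphi_\alpha)$ with $U_\alpha\cap V\neq\emptyset$. Then $\varphi_\alpha(S\cap U_\alpha)$ is HAW on the open set $\varphi_\alpha(U_\alpha)$. Restricting Bob's first ball to lie inside $\varphi_\alpha(U_\alpha\cap V)$, the winning-implies-thick argument of Lemma \ref{L:HAW-Ra}(i) (Schmidt's dimension bound is local) shows that $\varphi_\alpha(S\cap U_\alpha\cap V)$ has dimension $a$. Since $C^1$ charts are locally bi-Lipschitz, Hausdorff dimension is preserved. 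For (ii), intersect chart by chart: a countable intersection of sets that are HAW on an open $U\subseteq\RR^a$ is HAW on $U$, by the same interleaving of strategies that proves Lemma \ref{L:HAW-Ra}(ii). For (iii), pull back an atlas of $N$ by $f$ to obtain an atlas $\{(f^{-1}(W_\beta),\psi_\beta\circ f)\}$ of $M$. In these charts the images of $f(S)$ and of $S$ coincide, so (iii) follows from atlas independence.

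\textbf{Item (iv).} This is the main step. If $S$ is HAW on $M$, then its restriction to the open submanifold $M\setminus M'$ is HAW, because charts of $M\setminus M'$ are restrictions of charts of $M$ and Bob's first ball simply lies in a smaller open set. For the converse, work in a chart where $M'$ becomes a finite union of $C^1$ submanifolds $N_1,\dots,N_k$ of dimension $<a$. Alice plays to avoid these. Once Bob's radius $\rho_q$ is small, each $N_i\cap B_q$ lies in a thin neighborhood of an affine hyperplane: pick the tangent plane at a nearby point and extend it to a hyperplane, using $C^1$-flatness at small scales. Alice spends at most $k$ consecutive turns deleting hyperplane neighborhoods of width $\beta\rho_q$, one per piece. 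This forces Bob's ball to stay at distance $\gtrsim\beta^{k+1}\rho_q$ from $M'$. The obstacle here is quantitative: the deviation of $N_i$ from its tangent hyperplane within $B_q$ must be $o(\rho_q)$, which holds for $C^1$ submanifolds on compact pieces, and closedness of $M'$ handles limits. After this, Bob's ball lies in a compact subset of $M\setminus M'$. Alice then switches to her winning strategy there, treating the current ball as Bob's first move. Since HAW is a property for every $\beta$ and finitely many extra turns do not affect the outcome, she wins.

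\textbf{Item (v).} Use product charts $\varphi_\alpha\times\psi_\beta$, which reduces the claim to showing that $S\times\RR^b$ is HAW on $U\times W\subseteq\RR^{a}\times\RR^b$ whenever $S$ is HAW on $U$. Given Bob's ball $B_q=B(x_q,y_q;\rho_q)$, Alice projects it to the ball $B(x_q;\rho_q)$ in $\RR^a$ and consults her strategy for $S$, which returns a hyperplane neighborhood $L^{(r)}$. She plays $(L\times\RR^b)^{(r)}$, again a hyperplane neighborhood. Bob's next ball avoids it, so its projection avoids $L^{(r)}$ and has radius $\ge\beta\rho_q$, giving a legal simulated Bob move in $\RR^a$. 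The limit point projects into $S$, so it lies in $S\times\RR^b$.
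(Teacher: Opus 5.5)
Your arguments for (i)--(iii) and (v) are the standard ones that the paper only cites (\cite[Proposition 3.5 and proof of Theorem 3.6(a)]{KW15}). Your overall plan for the converse in (iv) is also right: steer Bob's ball off $M'$, then restart the strategy on $M\setminus M'$ with Bob's current ball as his first move. The gap is in the avoidance step. You claim that once $\rho_q$ is small, each $N_i\cap B_q$ lies within $o(\rho_q)$ of a tangent hyperplane, citing uniformity ``on compact pieces'' and saying that closedness of $M'$ ``handles limits''.

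Only the union $M'$ is assumed closed. A single piece $N_i$ need not be closed, so $N_i\cap\overline{B_0}$ need not be compact in $N_i$. Near a frontier point $p\in\overline{N_i}\setminus N_i$, which lies in another piece, there is no uniform $C^1$ modulus, and no scale is small enough. For example, write the cone $\{x^2+y^2=z^2\}\subset\RR^3$ as the vertex together with the cone minus the vertex. Around the vertex the cone lies in no slab of width $o(\rho)$, at any scale $\rho$. After Alice deletes a slab through the vertex, Bob can take a ball of radius $\beta\rho_q$ at distance comparable to $\beta\rho_q$ from the vertex. Inside that ball the cone is curved at the scale of the ball, so for small $\beta$ no slab of width $\beta\rho_{q+1}$ contains it. Hence ``one tangent slab per piece in $k$ consecutive turns'' fails, and closedness of $M'$ does not repair it. This is exactly the situation in the paper's own uses of (iv): the Bruhat cells making up $(L\backslash K)\setminus Q$ and the non-regular strata of $S(\fp)$ are not closed.

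A fix that avoids any frontier bookkeeping is as follows.
\begin{enumerate}
\item[(1)] In a chart, write $M'$ as a countable union of compact sets $K_j$. Each $K_j$ is the image of a closed ball under a submanifold chart of a single $N_i$.
\item[(2)] On such a $K_j$ the parametrization is bi-Lipschitz and uniformly differentiable. So for $\rho_q$ below a threshold depending on $K_j$ and $\beta$, and any $p\in K_j\cap B_q$, the set $K_j\cap B_q$ lies in the $\beta\rho_q$-neighbourhood of any affine hyperplane containing $p+T_pN_i$.
\item[(3)] One such deletion makes $B_{q+1}\subset B_q$ disjoint from $K_j$, so each complement of $K_j$ is HAW.
\item[(4)] By countable intersection, the complement of $\bigcup_j K_j$, i.e.\ of $M'$ in the chart, is HAW.
\item[(5)] Since $M'$ is closed, an outcome avoiding $M'$ has positive distance from it. So Bob's ball is disjoint from $M'$ at some finite stage, and at the first such stage Alice switches strategies as you describe.
\end{enumerate}
The paper itself simply cites \cite[Lemma 2.3]{GW26} for (iv).
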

	
	\begin{proof}
		(i)--(iii) appeared as \cite[Proposition 3.5]{KW15} and are clear from Lemma \ref{L:HAW-Ra}; 
        (iv) is \cite[Lemma 2.3]{GW26}; (v) is proved in \cite[Proof of Theorem 3.6(a)]{KW15}.
	\end{proof}

\subsection{Cantor potential game}\label{SS:Cantor} In this subsection, we recall another useful variant of Schmidt's game--Cantor potential game, which will play a pivotal role in this paper.

\begin{definition}
    Cantor potential game \cite{BH17,BHNS18} involves two parameters $\alpha\geq 0$, $\beta\in(0,1)$,  and is played by two players Alice and Bob on a complete metric space $(X,\dist)$ with a target set $S\subset X$. Bob starts the game by choosing a closed ball $B_0$ of radius $\rho_0$ in $X$. After Bob chooses a closed ball $B_q$ of radius $\rho_0\beta^q$, Alice chooses, for each $i\geq 0$, a collection $\cA_{q+1,i}$ of at most $\beta^{-\alpha(i+1)}$ balls of radius $\rho_0\beta^{q+1+i}$. Then Bob chooses a ball $B_{q+1}$ of radius $\rho_0\beta^{q+1}$ such that $$
    B_{q+1}\subseteq B_q\setminus \bigcup_{0\leq q'\leq q}\bigcup_{A\in \cA_{q'+1,q-q'}}A.
    $$

    We say that Alice wins by default if at any time Bob has no legal move. Otherwise, Alice wins the game if and only if 
    $$\bigcap_{q=0}^{+\infty} B_q\cap S\ne\emptyset.$$ The set $S$ is \textsl{$(\alpha,\beta)$-Cantor winning} if Alice has a winning strategy, and is \textsl{$\alpha$-Cantor winning} if it is $(\alpha,\beta)$-Cantor winning for any $\beta\in(0,1)$.
\end{definition}

It was shown in \cite{BH17} that on certain nice spaces (e.g. doubling spaces) the properties (W1)--(W3) still hold for Cantor winning sets. In this paper, we shall use another interesting property of Cantor winning sets. To do this, we need to recall the concept of \textsl{Ahlfors regular absolutely decaying measure}.

\begin{definition}\label{D:Ahlfors}
    Let $X$ be a metric space. Given $\alpha>0$, a Borel measure $\nu$ on $X$ is \textsl{$\alpha$-Ahlfors regular}, if there exist $A_0,r_0>0$ such that for any $x\in\supp(\nu)$ and any $r\in (0,r_0)$, 
$$A_0^{-1}r^{\alpha}\leq \nu(B(x,r))\leq A_0r^{\alpha}.$$
We say that $\nu$ is \textsl{Ahlfors regular} if it is $\alpha$-Ahlfors regular for some $\alpha>0$.
\end{definition}

\begin{definition}
    A Borel measure $\nu$ on $\RR^a$ is \textsl{absolutely decaying} if there exist $E_0,\delta,\rho_0>0$ such that for any $x\in\supp(\nu)$, any $\rho\in (0,\rho_0)$, every affine hyperplane $L\subseteq \RR^a$ and any $r>0$, one has $$
    \nu(B(x,\rho)\cap L^{(r)})\leq E_0(r/\rho)^{\delta}\cdot \nu(B(x,\rho)).
    $$
\end{definition}

We list here several external results for our later use:

\begin{lemma}[{\cite[Lemma 22]{BNY21}}]\label{L:BNY-cover}
    Let $\nu$ be an $\alpha$-Ahlfors regular measure on a metric space $X$, let $A_0,r_0>0$ be as in Definition \ref{D:Ahlfors}, and let $S\subseteq X$ be any subset. Suppose that $r\in (0,r_0)$ and $$\nu(S^{(r)}):=\nu\{x\in X:\dist(x,S)\leq r\}<+\infty.$$ Then there exists a cover of $S\cap\supp(\nu)$ by balls with centers in $S\cap\supp(\nu)$ and radius $2r$, whose cardinality is at most $A_0\nu(S^{(r)})\cdot r^{-\alpha}$.
\end{lemma}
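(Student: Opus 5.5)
We prove the lemma by the standard greedy-packing argument: pick a maximal $r$-separated subset of $S\cap\supp(\nu)$, use its maximality to get the covering, and use Ahlfors regularity to bound its cardinality.

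\emph{Step 1 (the separated set).} By Zorn's lemma, choose a maximal family $\{x_i\}_{i\in I}\subseteq S\cap\supp(\nu)$ with $\dist(x_i,x_j)>2r$ for all $i\neq j$.

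\emph{Step 2 (covering).} Let $y\in S\cap\supp(\nu)$. If $y$ were at distance $>2r$ from every $x_i$, we could add it to the family, contradicting maximality. So $\dist(y,x_i)\le 2r$ for some $i$. Hence the closed balls $B(x_i,2r)$ cover $S\cap\supp(\nu)$, and their centers lie in $S\cap\supp(\nu)$, as required.

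\emph{Step 3 (disjointness).} Consider the balls $B(x_i,r)$; we treat them as open balls, or as closed balls of a slightly smaller radius if the convention requires. Since the centers are more than $2r$ apart, these balls are pairwise disjoint. Each is contained in $S^{(r)}$, because every point of $B(x_i,r)$ lies within $r$ of $x_i\in S$.

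\emph{Step 4 (counting).} Each $x_i$ lies in $\supp(\nu)$ and $r<r_0$, so Definition \ref{D:Ahlfors} gives $\nu(B(x_i,r))\ge A_0^{-1}r^{\alpha}$. For any finite subfamily $I'\subseteq I$, disjointness gives
$$|I'|\,A_0^{-1}r^\alpha\le \sum_{i\in I'}\nu(B(x_i,r))\le \nu(S^{(r)})<+\infty.$$
Therefore $I$ is finite, and $|I|\le A_0\,\nu(S^{(r)})\,r^{-\alpha}$.

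\emph{Main obstacle.} The only delicate point is the open-versus-closed ball convention in Step 3. If the balls in Definition \ref{D:Ahlfors} are closed, two closed balls of radius $r$ whose centers are more than $2r$ apart are still disjoint, because for any point $z$ the triangle inequality gives $\dist(z,x_i)+\dist(z,x_j)\ge\dist(x_i,x_j)>2r$, so $z$ cannot be within $r$ of both. The strict separation chosen in Step 1 therefore handles either convention, and no adjustment of the radius is needed.
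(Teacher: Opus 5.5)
Your proof is correct. The paper gives no proof of its own and simply cites \cite[Lemma 22]{BNY21}, and your argument is the standard packing proof of that lemma: take a maximal $2r$-separated family in $S\cap\supp(\nu)$, note that the disjoint $r$-balls around its points lie in $S^{(r)}$, count them via the Ahlfors lower bound, and get the covering by $2r$-balls from maximality. One small correction to your closing remark: strict separation $>2r$ only guarantees that the \emph{closed} $2r$-balls cover, which is the convention used throughout this paper; under an open-ball convention you would instead take a maximal family with $\dist(x_i,x_j)\ge 2r$, which still keeps the open $r$-balls disjoint and makes the open $2r$-balls cover.
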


\begin{theorem}[{\cite[Proposition 11, Theorem 21]{BNY21}}]\label{T:BNY21}
\,\begin{enumerate}
    \item[(i)] A Borel subset of $\RR^a$ that intersects the support of every nonzero compactly supported Ahlfors regular absolutely decaying measure is HAW.
    \item[(ii)] Let $X$ be the support of a nonzero $\alpha$-Ahlfors regular measure, and let $S\subseteq X$ be $\alpha'$-Cantor winning for some $0<\alpha'<\alpha$. Then $S\neq\varnothing$.
\end{enumerate}
\end{theorem}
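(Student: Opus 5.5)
Theorem \ref{T:BNY21} is quoted from \cite{BNY21}, and I would prove the two parts separately, both by contradiction with a measure-counting argument.

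\emph{Part (ii).} Let $\nu$ be the nonzero $\alpha$-Ahlfors regular measure with $X=\supp(\nu)$, and fix a small $\beta\in(0,1)$, to be chosen in terms of $A_0$, $\alpha$ and $\alpha'$. Let Alice follow an $(\alpha',\beta)$-Cantor winning strategy for $S$. I will give Bob a strategy that always has a legal move; then the game cannot end by default, so the nested balls $B_q$ shrink to a point of $S$, and $S\neq\varnothing$.

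Bob's rule is driven by a potential. After round $q$, the balls that Alice has declared but that still matter are those of $\cA_{q'+1,i}$ with $q'\le q$ and $q'+i\ge q$. For $B_q$ I define
$$
\Phi_q=\sum_{q'\le q}\ \sum_{i\ge q-q'} \ \frac{\#\{A\in\cA_{q'+1,i}: A\cap B_q\neq\varnothing\}\,(\rho_0\beta^{q'+1+i})^{\alpha}}{\nu(B_q)} .
$$
By Ahlfors regularity, $\nu(B_q)\asymp(\rho_0\beta^q)^\alpha$. The fresh contributions made by Alice in round $q$ are at most $\sum_i\beta^{-\alpha'(i+1)}\beta^{\alpha(i+1)}\lesssim\beta^{\alpha-\alpha'}$, which is small.

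Inside $B_q$ I take about $\beta^{-\alpha}$ disjoint candidate subballs of radius $\rho_0\beta^{q+1}$, centered in $X$; this is possible by Ahlfors regularity and Lemma \ref{L:BNY-cover}. Bob chooses the candidate that minimizes the next potential. An averaging (pigeonhole) argument then shows that $\Phi_{q+1}\le C\beta^{\alpha-\alpha'}+\beta^{\alpha}\Phi_q\cdot(\text{const})$. Hence $\Phi_q$ stays below a threshold for all $q$ once $\beta$ is small. Any ball of radius $\rho_0\beta^{q+1}$ that would block a candidate contributes at least a fixed amount to the potential, so a bounded potential guarantees that some candidate avoids every removed ball. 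The main technical point is this bookkeeping: showing that the averaging survives all scales $i$ simultaneously. The hypothesis $\alpha'<\alpha$ is used exactly to make the geometric series converge.

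\emph{Part (i).} Suppose $S$ is Borel and not HAW. By Borel determinacy (Martin), for some $\beta$ Bob has a winning strategy $\sigma$ in the $\beta$-hyperplane absolute game. I would use $\sigma$ to build a compact Cantor set $\cC$ disjoint from $S$ together with a natural measure on it that is Ahlfors regular and absolutely decaying. This contradicts the hypothesis of (i).

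The construction is a tree of Bob responses. At a node with ball $B$ of radius $\rho$, Alice probes $\sigma$ by playing many hyperplane neighborhoods in succession, each time excluding the previously obtained responses. In this way I extract $N$ disjoint descendant balls of radius comparable to $\beta^{k}\rho$, for a fixed number $k$ of rounds. They are $\beta$-separated and have the following property: no hyperplane neighborhood of width $\epsilon\rho$ meets more than a fraction $\lesssim\epsilon^{\delta}$ of them. The reason is that Alice can play that hyperplane and force $\sigma$ to avoid it, so the responses cannot concentrate there.

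The uniform measure on the resulting tree is Ahlfors regular of dimension $\log N/\log(\beta^{-k})$. The hyperplane-avoidance property at every level gives absolute decay. Every branch of the tree is a play consistent with $\sigma$, so the support of the measure lies outside $S$, which is the desired contradiction.

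The main obstacle is the quantitative extraction step: ensuring that the children are numerous and also spread out relative to every hyperplane, which is what yields a uniform decay exponent $\delta$. I would first try choosing the children greedily, each time playing a hyperplane through the current cluster of responses.
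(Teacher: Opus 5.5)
The paper gives no proof of this statement; it quotes both parts from \cite{BNY21}. Your overall architecture is the right one: a survival strategy for Bob in (ii), and Borel determinacy plus a Cantor set built from Bob's winning strategy in (i). However, the decisive quantitative step fails as written in each part.

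\emph{Part (ii): the potential does not contract.} Your potential weights an announced ball $A$ by $r_A^{\alpha}/\nu(B_q)$, i.e.\ by the proportion of $B_q$ it occupies. That quantity is preserved, not contracted, when you pass to the best child. Each surviving ball meets only $O(1)$ of the $\asymp\beta^{-\alpha}$ disjoint candidates, but in each child $c$ it meets, its weight is multiplied by $\nu(B_q)/\nu(c)\asymp\beta^{-\alpha}$. Averaging therefore gives only
$$\min_c\Phi(c)\le K\bigl(\Phi_q+O(\beta^{\alpha-\alpha'})\bigr),$$
with $K\ge1$ depending on $A_0$ and $\alpha$ but not on $\beta$, rather than the factor $\beta^{\alpha}$ you assert. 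For example, if the balls aimed two scales below are spread one per candidate, every child has essentially the same potential as $B_q$. Iterating a recursion with $K\ge 1$ gives no uniform bound on $\Phi_q$.

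The standard repair is to weight by $(r_A/\rho_q)^{s}$ for some $\alpha'<s<\alpha$. Then:
\begin{itemize}
\item averaging contracts by a factor $\lesssim\beta^{\alpha-s}$;
\item Alice's fresh balls add $\lesssim\sum_i\beta^{(s-\alpha')(i+1)}\lesssim\beta^{s-\alpha'}$;
\item a ball of radius $\rho_0\beta^{q+1}$ that blocks a child contributes exactly $1$ to that child's potential.
\end{itemize}
So the invariant $\Phi<1$ propagates once $\beta$ is small; the hypothesis $\alpha'<\alpha$ is what leaves room for $s$. Alternatively, count the surviving nodes $f(q)$ of the tree of candidate plays: $f(q+1)\ge Nf(q)-C\sum_{j\ge0}\beta^{-\alpha'(j+1)}f(q-j)$ gives $f(q+1)\ge\frac N2 f(q)$ by induction, and K\"onig's lemma yields an infinite legal play.

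\emph{Part (i): the decay and regularity steps are not justified.} The reason you give for absolute decay does not work. That Alice \emph{could} play a given hyperplane $L$ and force $\sigma$ off it shows only that the set of all outcomes of $\sigma$ is hyperplane diffuse. It says nothing about where the finitely many children you have already extracted (responses to other probes) sit relative to $L$, and there are uncountably many $L$. A one-level bound of the form $\lesssim\epsilon^{\delta}$ is not what one gets.

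What does work is to take $a+1$ children, the $(k+1)$-st produced by Alice opening with the $\beta\rho$-neighbourhood of a hyperplane through the centres of the first $k$. This is available only while $k\le a$, so your greedy ``hyperplane through the current cluster'' cannot produce more than $a+1$ children. The centres then span a simplex of width $\gg_\beta\rho$, so every slab of width $\epsilon\rho$ with $\epsilon$ small misses at least one child. Iterating over levels gives $\nu(B\cap L^{(r)})\le\bigl(a/(a+1)\bigr)^{j}\nu(B)$ with $j\asymp\log(\rho/r)$, which is the required power decay.

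Ahlfors regularity also fails as stated. Bob's radii are only constrained to lie in $[\beta\rho_q,\rho_q)$, so after a fixed number $k$ of rounds different branches need not have radii comparable to $\beta^{k}\rho$. With uneven contraction ratios, the uniform tree measure is not Ahlfors regular. Instead, follow each branch until its radius first drops below an absolute target scale $\lambda^{j}\rho_0$; this happens in finitely many rounds because $\sigma$ forces the radii to $0$. Choose $\lambda$ small in terms of $\beta$ so that siblings are disjoint and separated at the parent's scale.
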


\subsection{Exterior powers and quantitative non-divergence}\label{SS:nondivergence}
The theory of quantitative non-divergence originates in Margulis's work \cite{Mar71} on the non-divergence of unipotent trajectories in the space of lattices, and Dani's subsequent recurrence estimates \cite{Dan79,Dan86}. In \cite{KM98}, Kleinbock and Margulis developed the $(C,\alpha)$-good function method and established the quantitative non-divergence estimates with power-law decay. Later, this framework was extended to certain non-Lebesgue measures including friendly measures \cite{KLW04}. These developments provide the quantitative non-divergence arguments employed in \cite{BNY21,BNY22}.

    To state the quantitative non-divergence result in the matrix setting considered here, we first introduce some notations of norms on exterior powers of Euclidean spaces and matrix spaces. Let $a\in\NN$ and let $\la\cdot,\cdot\ra$ denote the standard inner product on $\RR^a$. It induces a norm $\|\cdot\|$ on $\RR^a$ and hence a norm on the exterior powers $\bigwedge^r\RR^a\,(0\leq r\leq a)$.

For a linear map $T:V\to V$, its exterior action is denoted by $\bigwedge^rT$. Its \textsl{infinitesimal exterior action} is denoted by $T^{[r]}$ and is defined by
$$
T^{[r]}(v_1\wedge\cdots\wedge v_r)
:=\sum_{i=1}^r v_1\wedge\cdots\wedge Tv_i\wedge\cdots\wedge v_r.
$$
Thus $\bigwedge^r(\exp T)=\exp(T^{[r]})$. These two actions are kept distinct throughout the proof. 

For a lattice $\Lambda\subseteq V$, put
$$\lambda_1(\Lambda):=\min_{v\in\Lambda\setminus\{0\}}\|v\|.$$
A nonzero exterior vector is \textsl{decomposable} if it is a single wedge of vectors. 

\begin{lemma}[\cite{Cassel}]\label{L:Minkowski}
    There exists a constant $c_1>0$ with the following property. Let $1\leq r<d$ and $\Lambda$ a lattice of $\RR^d$ with covolume $1$. If there exists a rank-$r$ subgroup of $\Lambda$ with covolume $\Delta$ in its real linear span, then $\lambda_1(\Lambda)\leq c_1\Delta^{1/r}$.
\end{lemma}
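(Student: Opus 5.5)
The plan is to prove Lemma \ref{L:Minkowski} by combining Minkowski's second theorem with the fact that the covolume of a sublattice controls the product of its successive minima. Let $\Lambda'\subseteq\Lambda$ be the rank-$r$ subgroup, $W=\RR\Lambda'$ its real span (an $r$-dimensional subspace), and $\Delta=\covol(\Lambda')$ in $W$ with respect to the restricted Euclidean structure. Since $\Lambda'\setminus\{0\}\subseteq\Lambda\setminus\{0\}$, we have $\lambda_1(\Lambda)\leq\lambda_1(\Lambda')$, where $\lambda_1(\Lambda')$ is computed in $W\cong\RR^r$. So it suffices to bound the shortest nonzero vector of an $r$-dimensional lattice of covolume $\Delta$ by a constant times $\Delta^{1/r}$.

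For that step I apply Minkowski's first (convex body) theorem in $W\cong\RR^r$. Let $V_r$ be the volume of the unit ball in $\RR^r$. The closed ball of radius $R$ in $W$ is convex, centrally symmetric and compact, with volume $V_rR^r$. Choose $R$ with $V_rR^r=2^r\Delta$. Then the ball contains a nonzero point of $\Lambda'$, giving
\[
\lambda_1(\Lambda)\le \lambda_1(\Lambda')\le 2V_r^{-1/r}\Delta^{1/r}.
\]
The constant should be uniform, so I take $c_1:=\max_{1\le r<d}2V_r^{-1/r}$. This is a finite maximum over finitely many $r$. If $c_1$ is meant to be independent of $d$ as well, note that $V_r^{-1/r}\sim\sqrt{r/(2\pi e)}$ grows, so one would instead allow $c_1=c_1(d)$. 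I expect that is what is intended, since $d$ is fixed throughout the application. If a $d$-independent constant were really required, I would flag the statement as needing $c_1$ to depend on $d$ (Hermite's constant $\gamma_r^{1/2}$ is unbounded).

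The argument uses neither the hypothesis $\covol(\Lambda)=1$ nor $r<d$, except to make sense of the setting. There is no real obstacle here. The only point needing care is that "covolume in its real linear span" is measured with the Euclidean metric restricted to $W$, so that Minkowski's theorem applies verbatim after choosing an isometry $W\cong\RR^r$. Equivalently, $\Delta=\|v_1\wedge\cdots\wedge v_r\|$ for any $\ZZ$-basis $v_1,\dots,v_r$ of $\Lambda'$, which is how the lemma will be used with the exterior-power norms introduced above.
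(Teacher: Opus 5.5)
Your argument is correct. The paper gives no proof of its own and only cites Cassels; your route, $\lambda_1(\Lambda)\le\lambda_1(\Lambda')$ followed by Minkowski's convex body theorem in the $r$-dimensional span, is the standard argument behind that citation. You are also right that $c_1$ must depend on $d$, since Hermite's constant is unbounded, and this is harmless here because $d=\dim_{\RR}V_{\RR}$ is fixed throughout the paper.
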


\begin{proposition}[{\cite[Theorem 5.11]{BNY22}}]\label{P:nondivergence}
    Let $h_0:\RR^a\to\SL_d(\RR)$ be a real analytic map, and let $\nu$ be a nonzero compactly supported Ahlfors regular absolutely decaying measure. There exist $C_0,\alpha_0,r_0>0$ such that, for every ball $B$ centered in $\supp(\nu)$ with radius at most $r_0$, every diagonal $D\in\SL_d(\RR)$, and every $0<\rho\leq1$, at least one of the following holds:
    \begin{enumerate}
        \item[(i)] for every $\epsilon>0$,
        $$
        \nu\{z\in B:\lambda_1(Dh_0(z)\ZZ^d)<\epsilon\}
        \leq C_0(\epsilon/\rho)^{\alpha_0}\nu(3B);
        $$
        \item[(ii)] there exist $1\leq r<d$ and a nonzero 
        decomposable vector $\xi_0\in\bigwedge^r\ZZ^d$ such that
        $$\sup_{z\in B}\left\|\bigwedge^r(Dh_0(z))\xi_0\right\|<\rho.$$
    \end{enumerate}
\end{proposition}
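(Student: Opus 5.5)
We would prove Proposition \ref{P:nondivergence} by running the Kleinbock--Margulis quantitative non-divergence theorem in its measure-theoretic form \cite{KM98,KLW04}. In that form, $\nu$ is a Federer (doubling) measure and the relevant functions are $(C,\alpha)$-good with respect to $\nu$.

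For $1\le r<d$ and a primitive decomposable $\xi\in\bigwedge^r\ZZ^d$, set
$$\psi_{\xi}(z):=\Bigl\|\bigwedge^r(Dh_0(z))\xi\Bigr\|.$$
The abstract theorem then reads as follows. Suppose that, for every such $\xi$ (equivalently, every primitive subgroup $\Delta\le\ZZ^d$ of rank $r$),
$$\sup_{z\in B}\psi_\xi(z)\ge\rho .$$
Suppose also that each $\psi_\xi$ is $(C,\alpha)$-good on $3B$ with respect to $\nu$. Then for every $\epsilon>0$,
$$\nu\{z\in B:\lambda_1(Dh_0(z)\ZZ^d)<\epsilon\}\le C'(\epsilon/\rho)^{\alpha}\nu(3B).$$
The proof of this is the usual induction on flags of primitive subgroups using the Besicovitch covering property. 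Here the passage from $B$ to $3B$ comes from the doubling property of $\nu$, which follows from Ahlfors regularity.

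The dichotomy is then immediate. If the supremum condition fails for some $\xi$, we take $\xi_0:=\xi$ and alternative (ii) holds. Otherwise alternative (i) holds with $C_0=C'$ and $\alpha_0=\alpha$.

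So the real content is to verify the $(C,\alpha)$-good property, with constants uniform in the ball $B$ (radius $\le r_0$), in the diagonal matrix $D$, and in $\xi$. We would reduce this as follows.

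\emph{Reduction to a finite-dimensional space.} The coordinates of $\bigwedge^r(Dh_0(z))\xi$ in the standard basis are of the form
$$\sum_I c_I\,m_I(z),$$
where the $m_I$ are the $r\times r$ minors of $h_0$. Multiplying by $D$ only rescales these coordinates. Hence every coordinate function lies in the fixed finite-dimensional real vector space $\mathcal{F}$ spanned by the minors of $h_0$, independently of $D$ and $\xi$. Since $\psi_\xi$ is the Euclidean norm of a vector whose entries lie in $\mathcal F$, the standard lemma from \cite{KM98} reduces the task to showing that every $f\in\mathcal{F}$ is $(C,\alpha)$-good on balls of radius $\le 3r_0$ centred in $\supp\nu$, with constants uniform over all of $\mathcal{F}$.

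\emph{Uniform goodness via compactness.} By homogeneity we may normalize $f$ to lie in the unit sphere of $\mathcal{F}$, and by rescaling we may rescale the ball to unit size. This leaves a compact family of analytic functions, indexed by the normalized $f$, the centre (ranging over the compact set $\supp\nu$), and the radius.
\begin{itemize}
\item For each member of the family, we want an estimate of the form $\nu\{|f|<\varepsilon\sup|f|\}\lesssim\varepsilon^{\alpha}$. We would obtain it from a Taylor/{\L}ojasiewicz-type argument. Near a point where $|f|$ is small, some derivative of bounded order (bounded by analyticity and compactness) is bounded below. Then the sublevel set $\{|f|<\varepsilon\}$ is contained in a neighbourhood of an analytic hypersurface, of width $\varepsilon^{1/k}$.
\item Absolute decay gives power decay for hyperplane neighbourhoods. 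Combined with a covering argument via Lemma \ref{L:BNY-cover}, which locally straightens the hypersurface into hyperplane pieces, this yields power decay for $\nu$ of such hypersurface neighbourhoods.
\end{itemize}
An upper semicontinuity and compactness argument then makes $k$, and hence the constants $C$ and $\alpha$, uniform over the family.

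\emph{The main obstacle.} The hardest step is the case of functions $f\in\mathcal F$ that vanish identically on $\supp\nu\cap B$ for some small ball. Goodness is trivial for such $f$, but the compactness argument must handle limits of nonvanishing functions that degenerate to them. We would deal with this by passing to the quotient of $\mathcal{F}$ by the subspace of functions vanishing on $\supp\nu$ near the given point. By analyticity and a Noetherian (finite-dimensionality) argument, this subspace is locally constant in the centre once $r_0$ is small. Uniform goodness is then proved on the quotient, exactly as in \cite[Theorem 5.11]{BNY22}.
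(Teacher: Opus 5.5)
The paper does not prove this statement; it is imported from \cite[Theorem 5.11]{BNY22}. Your architecture is the standard route and is sound: the measure-theoretic Kleinbock--Margulis theorem of \cite{KLW04}, plus uniform $(C,\alpha)$-goodness of the finite-dimensional span $\mathcal F$ of the $r\times r$ minors of $h_0$, uniform in $D$ because $\bigwedge^r D$ is diagonal in the standard basis. But the step you call the real content is not carried out, and parts of it are misdescribed.

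First, the theorem in \cite{KLW04} needs $\sup_{B\cap\supp\nu}\psi_\xi\ge\rho$, together with goodness on $3^dB$ normalized by suprema over $\supp\nu$. The failure of (ii) only gives $\sup_{B}\psi_\xi\ge\rho$. You therefore need a uniform comparison $\sup_{B\cap\supp\nu}\psi_\xi\ge c\sup_B\psi_\xi$. This follows if you prove goodness normalized by the full-ball supremum: apply it with $\varepsilon$ just above $\sup_{B'\cap\supp\nu}|f|$ to get $\sup_{B'}|f|\le C^{1/\alpha}\sup_{B'\cap\supp\nu}|f|$. As written, however, your ``abstract theorem'' is not what the flag induction gives.

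Second, your ``main obstacle'' is vacuous. Suppose a nonzero $f\in\mathcal F$ vanished on $\supp\nu\cap B$. Take the minimal $k\ge1$ such that some $k$-th derivative of $f$ is not identically zero on $\supp\nu\cap B$. Then some $(k-1)$-th derivative $g$ vanishes on $\supp\nu\cap B$ with $\nabla g(y)\ne0$ at a point $y\in\supp\nu\cap B$. Near $y$ the support lies in a smooth hypersurface, hence in $L^{(C\rho^2)}\cap B(y,\rho)$ for the tangent hyperplane $L$, which contradicts absolute decay as $\rho\to0$. So your quotient is $\{0\}$, and deferring it to \cite[Theorem 5.11]{BNY22} would be circular anyway.

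The genuine difficulties lie elsewhere, and your sketch does not handle them. Your family is not compact in the radius. What rescues it is a uniform order $k$ for which $k$-jets are injective on $\mathcal F$ at all points near $\supp\nu$; this follows from analyticity, finite-dimensionality and compactness. Taylor's theorem then shows that $f(x+s\,\cdot)$, normalized on the unit ball, is $O(s)$-close in $C^{k+1}$ to a normalized polynomial of degree $\le k$. Hence every member of the family has some derivative of order $\le k$ bounded below at every point.

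You then need the following: a $C^{k+1}$-bounded function with such a lower bound is uniformly $(C,\alpha)$-good for absolutely decaying Federer measures. Your mechanism for this fails. The set $\{|f|<\varepsilon\}$ is in general not near a single hypersurface of bounded curvature; it lies near zero sets of several derivatives of $f$, which may be singular. Also, Lemma~\ref{L:BNY-cover} only counts covering balls and straightens nothing.

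What works is induction on $k$:
\begin{enumerate}
\item[(a)] Fix a direction $e$ with $|\partial_e^k g|$ bounded below on a small ball.
\item[(b)] On $\{|\partial_e g|\ge\delta\}$, the set $\{|g|<\varepsilon\}$ lies within $\varepsilon/\delta$ of a graph with slope and curvature $\lesssim\delta^{-O(1)}$. Inside a ball of radius $r\lesssim\delta$, that graph lies within $\varepsilon/\delta+\delta^{-O(1)}r^2$ of a hyperplane. Absolute decay, Besicovitch and Federer then give power decay after choosing $r$.
\item[(c)] The set $\{|\partial_e g|<\delta\}$ is controlled by the inductive hypothesis applied to $\partial_e g$.
\item[(d)] Taking $\delta$ a small power of $\varepsilon$ yields an exponent that degrades with $k$, as in \cite{KLW04}.
\end{enumerate}
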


\section{The Arithmetic Representation}\label{S:representation}

\subsection{A general Mahler's compactness criterion}\label{SS:properness}
We first recall a generalization of the classical Mahler's compactness criterion. Readers may refer to \cite{Benoist09, Morris15} for basic definitions and properties of algebraic groups and arithmetic lattices.

\begin{lemma}\label{L:Mahler} Let $\bG$ be a connected $\QQ$-simple algebraic group, let $G=\bG(\RR)^0$, and suppose that $G$ has no compact factors. Let $\Gamma\leq G$ be an arithmetic lattice. Let $\tau:\mathbf G\to\SL(V)$ be a nontrivial rational representation. Then after passing to a subgroup of finite index in $\Gamma$, there exists a full lattice $\cL\subseteq V$ preserved by 
$\Gamma$, such that the map \begin{equation}\label{E:embedding}
\Phi:G/\Gamma\to\SL(V_{\RR})/\SL(\cL),\qquad g\Gamma\mapsto \tau(g)\cL
\end{equation}
is continuous and proper. Here is a natural identification that $$
\SL(V_{\RR})/\SL(\cL)\longleftrightarrow \{\text{lattices in }V_{\RR}\text{ with the same covolume as }\cL\}.
$$
In particular, a subset in $G/\Gamma$ is relatively compact if and only if its corresponding image in the space of lattices is relatively compact.
\end{lemma}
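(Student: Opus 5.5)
The plan is to construct the lattice $\cL$ from the $\QQ$-structure, check that the map $\Phi$ is well defined and continuous, and then obtain properness from Mahler's criterion combined with the reduction theory of arithmetic groups (Borel--Harish-Chandra).

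\emph{Step 1: the lattice $\cL$.} Since $\tau$ is defined over $\QQ$, the space $V$ carries a $\QQ$-structure $V_\QQ$ that is preserved by $\tau(\bG(\QQ))$. Pick any $\ZZ$-lattice $\cL_0\subseteq V_\QQ$ of full rank. The group $\bG(\ZZ)$ is commensurable with $\Gamma$ because $\Gamma$ is arithmetic. By a standard argument, the subgroup of $\bG(\QQ)$ stabilizing $\cL_0$ is an arithmetic subgroup. Replace $\Gamma$ by the finite-index subgroup $\Gamma\cap\mathrm{Stab}(\cL_0)\cap G$ and set $\cL=\cL_0$. Then $\tau(\Gamma)\subseteq \SL(\cL)$, so $\Phi$ is well defined. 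Continuity is clear, since $\Phi$ is induced from the continuous homomorphism $\tau$.

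\emph{Step 2: reduce properness to a divergence statement.} Passing to a finite-index subgroup keeps the source space of finite volume and changes $\Phi$ only by a finite cover, so properness is unaffected. It therefore suffices to show that $\Phi(g_n\Gamma)$ leaves every compact set whenever $g_n\Gamma\to\infty$ in $G/\Gamma$. By Mahler's criterion, this amounts to finding nonzero $v_n\in\cL$ with $\tau(g_n)v_n\to 0$.

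\emph{Step 3: produce the short vectors.} This is the main step. I would use Siegel sets. Write $g_n=k_na_nu_n\gamma_n$ with $k_n$ in a compact set, $u_n$ in a compact set, $\gamma_n$ ranging over a finite set of $\QQ$-rational elements, and $a_n$ in the $\QQ$-split torus $\bS$ of a minimal $\QQ$-parabolic $\mathbf P$, lying in its positive chamber. Divergence of $g_n\Gamma$ forces some simple $\QQ$-root $\alpha$ to satisfy $\alpha(a_n)\to\infty$, after passing to a subsequence. Consider the maximal $\QQ$-parabolic $\mathbf P_\alpha$ attached to $\alpha$. Because $\bG$ is $\QQ$-simple, $\tau$ is nontrivial and $G$ has no compact factors, the representation $\tau$ is nontrivial on every $\QQ$-simple factor. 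Hence there is a nonzero $\QQ$-rational vector $w$ in a lowest $\bS$-weight space that is fixed by the unipotent radical of $\mathbf P_\alpha$. Its weight $\chi$ is a negative combination of the simple roots, and it involves $\alpha$ with nonzero coefficient. Then $\chi(a_n)\to 0$ uniformly along the Siegel set. Taking $v_n$ to be a bounded integer multiple of $\tau(\gamma_n^{-1})w$ gives $\|\tau(g_n)v_n\|\to 0$.

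The obstacle is Step 3: making sure the chosen weight genuinely decays on the whole positive chamber, and that $\chi(a_n)\to0$ holds even when only one root blows up. I would handle this by choosing $\chi$ to be the restriction of a lowest weight, which is strictly dominated by every simple root in its support. Where $\bG$ is only an $\RR$-form issue arises, I would instead cite Borel's form of Mahler's criterion, namely \cite{Benoist09}, for $\bG(\ZZ)$ acting on $\cL$.
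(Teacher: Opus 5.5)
Your Step 1 matches the paper, which takes $\Gamma$ of finite index in $\bG_{\ZZ}$ and quotes \cite[Lemma~2.7]{Benoist09} for a $\tau(\bG_{\ZZ})$-invariant lattice. For properness, however, you take a genuinely different route.

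The paper's proof uses no Siegel sets. It factors $\Phi$ as the finite covering $G/\Gamma\to\tau(G)/(\tau(G)\cap\SL(\cL))$ followed by the orbit map of $\cL$. The only substantive input is that the $\tau(G)$-orbit of $\cL$ in the space of lattices is closed. This is a closed-orbit statement for the $\QQ$-group $\tau(\bG)$, which has no $\QQ$-characters, and it follows from Chevalley's theorem and discreteness of a rational orbit; properness is then formal.

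You instead verify the divergence criterion directly from reduction theory. You produce an explicit short vector from a $\QQ$-rational extremal $\bS$-weight vector, transported by the finitely many rational cusp representatives and scaled into $\cL$. This needs heavier machinery: the Siegel-set decomposition $\bG(\RR)=\mathfrak S F\bG(\ZZ)$ with $F\subset\bG(\QQ)$ finite, plus the structure of the relative root system. In return it is constructive and quantitative: it gives $\lambda_1(\tau(g)\cL)\ll\chi(a)$ in Siegel coordinates, which the paper's argument does not.

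Two points in your Step 3 need repair.

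First, invariance of $w$ under the unipotent radical of the maximal parabolic $\mathbf P_\alpha$ is not what controls $\tau(u_n)$. In a Siegel set the $u_n$ range over a compact piece of $\mathbf M\mathbf N$, where $\mathbf N$ is the unipotent radical of the \emph{minimal} parabolic $\mathbf P$, which is in general larger. The right tool is the defining property of Siegel sets: $aua^{-1}$ stays bounded for $a$ in the truncated chamber and $u$ in the compact piece. This pins down the sign convention; with your normalization $\alpha(a_n)\to\infty$, the roots of $\mathbf N$ must be the negative ones. Then $\tau(kau)w=\chi(a)\,\tau(k)\tau(aua^{-1})w$ for any $\bS$-weight vector $w$ of weight $\chi$, so no invariance is needed at all.

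Second, the nonvanishing of the $\alpha$-coefficient of $\chi$ needs an actual argument; your phrase ``strictly dominated by every simple root in its support'' does not supply it. The argument runs as follows.
\begin{itemize}
\item $\ker\tau$ is a proper normal $\QQ$-subgroup of the $\QQ$-simple group $\bG$, hence finite.
\item So $\bS$ acts nontrivially, and the lowest $\QQ$-weight $\chi$ is a nonzero antidominant element of the relative root system ${}_{\QQ}\Phi$.
\item Since ${}_{\QQ}\Phi$ is irreducible for $\QQ$-simple $\bG$, every simple root occurs in $\chi$ with strictly negative coefficient.
\item Hence $\chi(a_n)\to0$ as soon as a single $\alpha(a_n)\to\infty$, the other simple roots being bounded below on the truncated chamber.
\end{itemize}

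Finally, if the $\QQ$-rank is $0$, $G/\Gamma$ is compact and there is nothing to prove. No separate ``$\RR$-form'' issue arises, since $\bG$ is defined over $\QQ$ and reduction theory applies to $\bG(\ZZ)$ directly.
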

\begin{proof}
Since $\mathbf G$ is $\mathbb Q$-simple and $\tau$ is nontrivial, we see that $\tau$ has finite kernel. Fix a faithful rational realization of $\mathbf G$, and write $\mathbf G_{\mathbb Z}$ for its integral points. By arithmeticity, after passing to a finite-index subgroup of $\Gamma$, we may assume that $\Gamma$ is a finite-index subgroup of $\mathbf G_{\mathbb Z}$. By \cite[Lemma~2.7]{Benoist09}, there is a full lattice $\mathcal L\subset V$ preserved by $\tau(\mathbf G_{\mathbb Z})$, and hence by $\tau(\Gamma)$. So the map \eqref{E:embedding} is well defined and continuous.

Now we show that the map \eqref{E:embedding} is proper. Note that the map \eqref{E:embedding} factors as
$$
G/\Gamma\twoheadrightarrow\tau(G)/(\tau(G)\cap\SL(\cL))\hookrightarrow\mathrm{SL}(V_{\mathbb R})/\mathrm{SL}(\mathcal L),
$$
where the first map is a finite covering by \cite[Lemma~2.17]{Benoist09}, and the last map is a closed embedding by \cite[Proposition 2.9]{Benoist09}. So their composition is proper as desired.
\end{proof}

\subsection{Construction of the rational representation}\label{SS:representation} Now for a connected $\QQ$-simple algebraic group $\bG$, we construct a rational representation $\tau:\bG\to\SL(V)$ to our needs. This approach, suggested by L. Guan in a private conversation, is motivated by the proof of \cite[Proposition 4.2]{Wu26}. Readers may refer to \cite{Hum72, Knapp02} for basic definitions and properties of semisimple Lie groups and their root systems.

\begin{proposition}\label{P:rank-one-power}
Let $\mathbf G$ be a connected $\QQ$-simple algebraic group, let $G=\mathbf G(\RR)^\circ$, and suppose that $G$ has no compact factors. Fix a maximal $\RR$-split torus $A$ whose weight spaces are defined over $\overline{\QQ}\cap\RR$. Let $Y$ be a nonzero element in $\fa=\Lie(A)$. In each real simple factor of $\fg=\Lie(G)$, we choose a system of positive restricted roots such that the corresponding component of $Y$ lies in the closed positive Weyl chamber $\overline{\fa^+}$.

For each real simple factor, let $\theta_i$ be its highest restricted root, and write
$$\lambda=\max_i\theta_i(Y)>0.$$
We choose a highest root $\theta$ in a factor attaining this maximum, and set $m=\dim_{\RR}\fg_\theta$. Then there exist a nontrivial rational irreducible representation $\tau:\bG\to\SL(V)$, a nonzero $E_{\theta}\in\fg_\theta$, an inner product on $V_{\RR}$, and orthonormal vectors $\be,\bff\in V_{\RR}$ with the following property.

For $M:=\mathrm{d}\tau(E_{\theta})$, one has \begin{equation}\label{E:rank-one-power}
M^{2m}=c\cdot\be \bff^{\top}\quad\text{for some }c\neq 0;\qquad
M^{2m+1}=0;
\end{equation}
and 
\begin{equation}
    \mathrm{d}\tau(Y)\be=m\lambda \be,\qquad
\mathrm{d}\tau(Y)\bff=-m\lambda \bff.
\end{equation}
All eigenvalues of $\mathrm{d}\tau(Y)$ lie in $[-m\lambda,m\lambda]$, and $\mathrm{d}\tau(Y)$ is self-adjoint for this inner product. 

Moreover, the representation $\tau$, the element $E_{\theta}$, the inner product on $V_{\RR}$, and an orthonormal weight basis containing $\be,\bff$ can all be chosen with algebraic coordinates, independently of the choices of $Y$.
\end{proposition}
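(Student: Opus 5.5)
Take $V$ to be the adjoint representation, or rather build $\tau$ from it: a $\QQ$-simple $\bG$ is a Weil restriction $\mathrm{Res}_{k/\QQ}\bG'$ of an absolutely simple group over a number field $k$. The adjoint representation $\Ad:\bG\to\SL(\fg_\QQ)$ is rational, nontrivial, and irreducible over $\QQ$, because $\fg_\QQ$ has no proper $\QQ$-ideals. Over $\RR$ it splits as the sum of the adjoint representations of the real simple factors $\fg_i$ of $\fg$.

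Set $\tau=\Ad$ and $M=\ad(E_\theta)$, where $E_\theta\in\fg_\theta$ is a root vector for the chosen highest root $\theta$ in a factor $\fg_{i_0}$ attaining $\lambda=\max_i\theta_i(Y)$. Note that $M$ kills every other factor. Grade $\fg_{i_0}$ by $\alpha\mapsto\alpha(Y)$; the eigenvalues of $\ad Y$ are the values $\alpha(Y)$ over restricted roots $\alpha$ together with $0$.

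Because $\theta$ is highest and $Y\in\overline{\fa^+}$, we have $|\alpha(Y)|\le\theta(Y)\le\lambda$ for every restricted root $\alpha$ in any factor, with equality only at $\pm$highest roots. Hence all eigenvalues of $\ad Y$ lie in $[-\lambda,\lambda]$. Self-adjointness follows from the standard inner product $B_\vartheta(X,Z)=-B(X,\vartheta Z)$ built from a Cartan involution $\vartheta$ compatible with $A$: then $\ad Y$ is symmetric for $Y\in\fp$.

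The nilpotency index of $\ad E_\theta$ is the main issue. Since $\ad E_\theta$ raises $Y$-degree by $\theta(Y)=\lambda$ and the degrees lie in $[-\lambda,\lambda]$, we get only $(\ad E_\theta)^3=0$ on $\fg$, with $(\ad E_\theta)^2$ mapping $\fg_{-\theta}\to\fg_\theta$. So the adjoint representation itself gives rank-one square only when $m=\dim\fg_\theta=1$. This is why the exponent is $2m$ and why I would pass to $\tau=\bigwedge^m\Ad$, or more precisely to the irreducible $\QQ$-subrepresentation of $\bigwedge^m\fg$ generated by the line $\bigwedge^m\fg_\theta$.

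Put $\be=\bigwedge^m\fg_\theta$ (normalized) and $\bff=\bigwedge^m\fg_{-\theta}$. These are eigenvectors of $\mathrm{d}\tau(Y)$ with eigenvalues $\pm m\lambda$, which are extremal because weights of $\bigwedge^m$ are sums of $m$ weights each in $[-\lambda,\lambda]$. They are orthonormal with respect to the induced inner product, and that inner product keeps $\mathrm{d}\tau(Y)$ self-adjoint. Now $M=(\ad E_\theta)^{[m]}$ raises degree by $\lambda$ and the degree range has width $2m\lambda$, so $M^{2m+1}=0$. Further, $M^{2m}$ maps only the $-m\lambda$ weight space to the $m\lambda$ weight space. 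Those extremal weight spaces are spanned by $\bff$ and $\be$ alone, provided $\theta$ is the unique root attaining $\lambda$. If it is not unique (several factors, or $Y$ on a wall), I would instead take $\be,\bff$ within the summand generated by $\fg_{\pm\theta}$, since $M$ preserves the subspace $\bigwedge^m\fg_{i_0}$. Thus $M^{2m}=c\,\be\bff^\top$ up to checking that $\be$ and $\bff$ span the extremal weight spaces of the relevant piece.

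Nonvanishing of $c$ comes from $\fsl_2$-theory. Take an $\fsl_2$-triple $(E_\theta,H_\theta,F_\theta)$. Then $\ad H_\theta$ acts on $\fg_{-\theta}$ by $-2$, and $\fg_{-\theta}=(\ad F_\theta)^2\fg_\theta$-type lowest weight vectors give $(\ad E_\theta)^2:\fg_{-\theta}\to\fg_\theta$, which is an isomorphism. Distributing $2m$ copies of $E$ across $m$ wedge factors, the only surviving terms apply $E^2$ to each factor, so $M^{2m}\bff=\frac{(2m)!}{2^m}\bigwedge^m\big((\ad E)^2\fg_{-\theta}\big)\ne0$. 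Proving that $(\ad E_\theta)^2|_{\fg_{-\theta}}$ is injective is the key step. I would try to show it by realizing $\fg_{-\theta}$ as the $H$-weight $-2$ part of a sum of adjoint-type $\fsl_2$-modules of highest weight at most $2$.

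Finally, rationality and algebraicity. Everything is defined over $\overline\QQ\cap\RR$ because the weight spaces of $A$ are, so $E_\theta$, $\be$, $\bff$, $B_\vartheta$ and a weight basis can be chosen algebraic. Irreducibility over $\QQ$ holds after taking the $\QQ$-span of Galois conjugates, which is a direct sum in which $\be,\bff$ still behave as above. None of these choices depend on $Y$ beyond the chamber choice, and there are finitely many chambers, so the construction is independent of $Y$.
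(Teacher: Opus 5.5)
Your construction is the paper's: pass to $\bigwedge^m\Ad$, take $\be,\bff$ spanning the lines $\bigwedge^m\fg_{\theta}$ and $\bigwedge^m\fg_{-\theta}$, and get $c\neq0$ from the multinomial count $M^{2m}(v_1\wedge\cdots\wedge v_m)=\frac{(2m)!}{2^m}(\ad E_\theta)^2v_1\wedge\cdots\wedge(\ad E_\theta)^2v_m$. However, your proof that $M^{2m}$ has rank one has a genuine gap.

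You grade by $\ad Y$ and claim that $|\alpha(Y)|=\lambda$ only for $\alpha=\pm\theta$. This is false for many singular $Y\in\overline{\fa^+}$, which the statement allows and which is needed later, e.g.\ for $\cO=\{Y\}$ with $Y$ on a wall. For instance, in $\fsl_3(\RR)$ take $Y=\diag(\frac13,\frac13,-\frac23)$. Then $\alpha_2(Y)=\theta(Y)=1$, so the $(-\lambda)$-eigenspace of $\ad Y$ is spanned by the matrix units $E_{31},E_{32}$, not by $\fg_{-\theta}$ alone.

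In such cases the extremal $Y$-weight space of $\bigwedge^m\fg$ has dimension greater than one. A $Y$-degree count then only shows that $M^{2m}$ is supported on that space. It does not show that $M^{2m}$ annihilates the complement of $\bff$ inside it. In the example, $M^2E_{32}=[E_{13},E_{12}]=0$ does hold, but not for any $Y$-degree reason.

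Restricting to $\bigwedge^m\fg_{i_0}$ does nothing about ties inside $\fg_{i_0}$. The condition you leave to be checked, that $\be,\bff$ span the extremal weight spaces, is simply false there. The same defect affects irreducibility. To place $\be$ and $\bff$ in a single irreducible $\QQ$-summand, you need them to span full eigenspaces of an operator that commutes with the $\bG$-equivariant projections. ``Taking the $\QQ$-span of Galois conjugates'' does not supply this.

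The missing idea is to grade by the coroot $H_\theta\in\fa$ of your $\fsl_2$-triple instead of by $Y$. On $\fg$, $\ad H_\theta$ is self-adjoint with eigenvalues in $\{-2,-1,0,1,2\}$; it vanishes on the other simple factors. Its $\pm2$-eigenspaces are exactly $\fg_{\pm\theta}$. This is the very fact your $\fsl_2$ argument needs to show that $(\ad E_\theta)^2|_{\fg_{-\theta}}$ is an isomorphism.

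Consequently, on $\bigwedge^m\fg$ the operator $\mathrm{d}(\bigwedge^m\Ad)(H_\theta)$ is self-adjoint with eigenvalues in $[-2m,2m]$. Its $\pm2m$-eigenspaces are exactly the lines $\RR\be$ and $\RR\bff$, so $\bff^{\perp}$ is the sum of the eigenspaces with eigenvalue $>-2m$. Since $M$ raises $H_\theta$-eigenvalues by $2$, $M^{2m}$ kills $\bff^{\perp}$ and $M^{2m+1}=0$, independently of $Y$.

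Both $H_\theta$ and $E_\theta$ act by operators commuting with the projections of any decomposition of $\bigwedge^m\fg$ into irreducible $\QQ$-subrepresentations. Hence the one-dimensional eigenspace $\RR\bff$ lies in a single summand $V$, and so does $\RR\be=M^{2m}(\RR\bff)$. This $V$ gives $\tau$.

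The $Y$-grading is then needed only for $\mathrm{d}\tau(Y)\be=m\lambda\be$, $\mathrm{d}\tau(Y)\bff=-m\lambda\bff$, and the spectral bound $[-m\lambda,m\lambda]$. There your argument is correct.
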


\begin{proof} We divide the proof into several steps.
\medskip

\textsl{Step 1: Construction of $E_{\theta}$ and its $\fsl_2$-triple in a real simple factor.}

We choose a Cartan involution of $\fg$ compatible with $\fa$. This defines an inner product on $\fg$. Fix a $\QQ$-basis of $\fg_{\QQ}$. By our choice of the restricted-root decomposition, we may choose an $E_{\theta}^0\in\mathfrak g_\theta\setminus\{0\}$ whose coordinates in this basis are real algebraic numbers. By \cite[Proposition 6.52]{Knapp02}, there exists a scalar multiple $E_{\theta}$ of $E_{\theta}^0$ in $\fg_{\theta}$, an $F_{\theta}\in \fg_{-\theta}$, and an $H_{\theta}\in \fa$, such that $(E_{\theta},F_{\theta},H_{\theta})$ is an \textsl{$\fsl_2$-triple} in a real simple factor of $\fg$, that is, $$
[E_{\theta},F_{\theta}]=H_{\theta},\quad [H_{\theta},E_{\theta}]=2E_{\theta},\quad [H_{\theta},F_{\theta}]=-2F_{\theta}.
$$
Moreover, these three vectors still have real algebraic coordinates. Note that the eigenvalues of $\ad H_\theta$ on this real simple factor are among $-2,-1,0,1,2$, and its $\pm2$ eigenspaces are precisely $\fg_{\pm\theta}$. On other real simple factors, its eigenvalues are all zero.

Let $M_0:=\ad E_{\theta}$. Then the representation theory of this $\mathfrak{sl}_2$-triple gives
\begin{equation}\label{E:M_0-property}
    M_0^3=0,\quad\text{and}\quad
M_0^2:\fg_{-\theta}\to\fg_\theta
\text{ is a linear isomorphism}.
\end{equation}
It follows that $M_0$ has exactly $m$ Jordan blocks of size three; all its other Jordan blocks have size at most two. 
\medskip 

\textsl{Step 2: Construction of a representation $(\widetilde{\tau},\widetilde{V})$ and the orthonormal vectors $\be,\bff\in \widetilde{V}_{\RR}$.}

Consider the nontrivial rational representation $$
\widetilde{\tau}:=\bigwedge^{m}\Ad:\bG\to \SL(\widetilde{V}),\quad\text{where}\quad \widetilde{V}:= \bigwedge^{m}\Lie(\bG).
$$
The inner product of $\fg$ induces an inner product on $\widetilde{V}_{\RR}=\bigwedge^{m}\fg$. Then the induced action $\widetilde{M}:=\mathrm{d}\widetilde{\tau}(E_{\theta})$ on $\widetilde{V}_{\RR}$ is explicitly given by
\begin{equation}\label{E:widetilde-M}
    v_1\wedge\cdots \wedge v_m\mapsto \sum_{i=1}^m v_1\wedge\cdots\wedge M_0(v_i)\wedge\cdots\wedge v_m.
\end{equation} 
Choose and fix unit vectors $\be$ and $\bff$ in $\bigwedge^m\fg_{\theta}$ and $\bigwedge^m\fg_{-\theta}$ respectively. It is straightforward to see from \eqref{E:M_0-property} and \eqref{E:widetilde-M} that \begin{equation}\label{E:widetilde-M-power}
\widetilde{M}^{2m}\bff=c\cdot \be\quad \text{for some }c\neq 0;\qquad \widetilde{M}^{2m+1}=0.
\end{equation} 

Moreover, we claim that: $\widetilde{M}^{2m}\big|_{\bff^{\perp}}=0$. In fact, consider the induced action $\mathrm{d}\widetilde{\tau}(H_{\theta})$ on $\widetilde{V}_{\RR}$: it is self-adjoint with respect to the inner product, and hence gives an orthogonal direct sum decomposition $$
\widetilde{V}_{\RR}=\bigoplus_{h\in\RR}\widetilde{V}_{h},\qquad \text{where}\quad\widetilde{V}_{h}:=\ker(\mathrm{d}\widetilde{\tau}(H_{\theta})-h\cdot\id).
$$
Its highest weight is $2m$ on $\widetilde{V}_{2m}=\bigwedge^m\fg_{\theta}$, and its lowest weight is $-2m$ on $\widetilde{V}_{-2m}=\bigwedge^m\fg_{-\theta}$. It follows that $$\bff^{\perp}=\bigoplus_{\lambda>-2m}\widetilde{V}_{\lambda}.$$
So our claim again follows from \eqref{E:M_0-property} and \eqref{E:widetilde-M}.

Consequently, we conclude from \eqref{E:widetilde-M-power} and our claim that
$$
\widetilde{M}^{2m}=c\cdot\be \bff^{\top}.
$$

\textsl{Step 3: Construction of an irreducible component $(\tau,V)$.}

By complete reducibility over $\QQ$, we decompose $\widetilde{V}$ as a direct sum of rational irreducible $\mathbf G$-subrepresentations. Since the action $\mathrm{d}\widetilde{\tau}(H_{\theta})$ on $\widetilde{V}_{\RR}$ commutes with all projections onto irreducible components, we see that any eigenspace $\widetilde{V}_{h}$ inherits this direct sum decomposition. In particular, the one-dimensional eigenspace $\widetilde{V}_{-2m}$ must be contained in one of these direct summands, say $V_{\RR}$. 

Moreover, since the action $\widetilde{M}=\mathrm{d}\widetilde{\tau}(E_{\theta})$ also commutes with all projections onto irreducible components, we see that $\widetilde{V}_{2m}=\widetilde{M}^{2m}(\widetilde{V}_{-2m})\subseteq V_{\RR}$. Therefore, the desired irreducible component is $$
\tau:=\widetilde{\tau}\big|_{V}:\bG\to\SL(V).
$$
It is clear that $\be,\bff$ are orthonormal vectors in $V_{\RR}$ satisfying \eqref{E:rank-one-power}.
\medskip 

\textsl{Step 4: Verification of weights of $\mathrm{d}\tau(Y)$.}

Since $Y\in\fa$, the action $\ad(Y)$ is self-adjoint with respect to the inner product on $\fg$. It follows that the induced action $\mathrm{d}\tau(Y)$ is also self-adjoint. For each restricted root $\alpha$ of each real simple factor with its highest restricted root $\theta_i$, one has $$
|\alpha(Y)|\leq \theta_i(Y)\leq \lambda.
$$ 
By definition, the weights of $\mathrm{d}\widetilde{\tau}$ are sums of $m$ adjoint weights, each of which is a restricted root or zero. So all eigenvalues of $\mathrm{d}\widetilde{\tau}(Y)$ lie in $[-m\lambda,m\lambda]$. Moreover, $\mathrm{d}\widetilde{\tau}(Y)\be=m\lambda\be$, $\mathrm{d}\widetilde{\tau}(Y)\bff=-m\lambda\bff$. This completes the proof.
\end{proof}

\section{The Global and Local Estimates}\label{S:global-local-estimates}

This section is devoted to proving the global and local estimates parallel to \cite[Sections 3-4]{Wu26}. 

We first fix some notations. Throughout this section, we retain the arithmetic setting and the representation constructed in Section \ref{S:representation}. Let $\bG,G,A,Y,\theta_i$, and $\theta$ be as in Proposition \ref{P:rank-one-power}, where we renormalize $Y$ so that $$
1=\theta(Y)=\max_i\theta_i(Y).
$$
Let $\tau:\bG\to\SL(V)$ be the rational representation given by Proposition \ref{P:rank-one-power}, and write $d=\dim_{\RR}V_{\RR}$.

We introduce the parameter space as follows. Let $\cO$ be a relatively open subset of an affine subspace of $\fa=\Lie(A)$, contained in $$
\left\{
Y'\in\overline{\fa^+}:1=\theta(Y')\geq \max_{i}\theta_i(Y')
\right\}.
$$
We also assume that all restricted roots and all weights occurring in $\bigwedge^rV_{\RR}\,(1\leq r<d)$ have constant signs on $\cO$. Then the expanding horospherical group $H=H(\{\exp(tY')\}_{t>0})$ is independent of $Y'\in \cO$. Its Lie algebra is $$
\fh=\bigoplus_{\alpha(Y')>0}\fg_{\alpha}\quad\text{ for any }Y'\in\cO.
$$
We work on any fixed nonempty compact subset $\cC$ of $\cO$.

Fix a $\QQ$-basis of $\fg_{\QQ}$. We choose a basis of $\fh$ whose coordinates are real algebraic numbers. With respect to this basis, one has the identification $\RR^{\dim\fh}\cong \fh$. For $z\in \RR^{\dim\fh}$, write $$
h(z):=\exp(z)\in H.
$$
For the element $E_{\theta}\in\fg_{\theta}$ given by Proposition \ref{P:rank-one-power}, we choose an inner product on $\fh$ for which the distinct restricted-root spaces are mutually orthogonal, such that $\|E_{\theta}\|=1$.



Fix $b>1$. For the action of the $\RR$-split torus $A$ on $V_{\RR}$, we choose and fix an orthonormal basis of $V_{\RR}$ consisting of weight vectors, in which the first and last ones are $\be$ and $\bff$. For $k,\ell\in\NN_0$, $t\geq 0$, and $Y\in\cC$, write
\begin{subequations}\label{E:diagonal-maps}
\begin{align}
 A_{k,\ell}&:=b^{(k-\ell)/d}\bigl(I_d+(b^{-k}-1)\be\be^{\top}
           +(b^\ell-1)\bff\bff^{\top}\bigr),\\
B_{t,Y}&:=\tau(\exp(t(\log b)Y)),\\
 D_{t,Y}^{(k,\ell)}&:=A_{k,\ell}\cdot B_{t,Y}.
\end{align}
\end{subequations}
All three maps have determinant one.

Let $\Gamma\leq G$ be an arithmetic lattice, and choose the full lattice $\cL\subseteq V$ from Lemma~\ref{L:Mahler}. We write $\widehat{\cL}=\covol(\cL)^{-1/d}\cdot\cL$ to be a unimodular lattice in $V_{\RR}$. 

\subsection{The global estimate} In this subsection, we prove the global estimate for the supremum of $r$-vector norms, which serves as early steps in excluding the possibility (ii) in Proposition \ref{P:nondivergence}.

\begin{proposition}\label{P:global-estimate}
There exists an integer $L_0\geq 1$ and a constant $\mu>0$ with the following property. For any $g\in G$ and any bounded subset $\cB\subseteq \RR^{\dim\fh}$, there exists some constant $c_0=c_0(g,\cB)>0$ such that 
\begin{equation}\label{E:global}
 \sup_{z\in B(z_0,\rho)}
 \left\|\bigwedge^r\!\left(D_{t,Y}^{(k,\ell)}\tau(h(z))\right)\xi\right\|
 \ge c_0\rho^{L_0} b^{\mu t-(k+\ell)}
\end{equation}
for all $z_0\in\cB$, $0<\rho\le1$, $1\leq r<d$, $k,\ell\in\NN_0$, $t\geq 0$, $Y\in\cC$, and all nonzero
decomposable $\xi\in\bigwedge^r(\tau(g)\widehat{\cL})$.
\end{proposition}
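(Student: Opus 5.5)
We bound the three factors in turn: the cost of $A_{k,\ell}$ on $r$-vectors, a lower bound on $\sup_z\|\bigwedge^r(\tau(h(z)))\xi\|$ restricted to the expanding directions of $B_{t,Y}$, and the resulting growth $b^{\mu t}$. First we dispose of $A_{k,\ell}$. Its eigenvalues are $b^{(k-\ell)/d}b^{-k}$ on $\be$, $b^{(k-\ell)/d}b^{\ell}$ on $\bff$, and $b^{(k-\ell)/d}$ elsewhere, so its smallest singular value is at least $b^{-k}b^{-\ell/d}\ge b^{-(k+\ell)}$ up to the harmless factor $b^{(k-\ell)/d}\ge b^{-\ell}$. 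Since $\bigwedge^r$ of an operator is not contracted below the product of its $r$ smallest singular values, we get $\|\bigwedge^r A_{k,\ell}w\|\ge b^{-(k+\ell)}\|w\|$ after adjusting constants (more carefully, only $\be$ and $\bff$ are deformed, so at most one factor $b^{-k}$ and one factor $b^{-\ell}$ appear, times $b^{r(k-\ell)/d}\ge b^{-\ell}$; we fix the exponent to obtain exactly $b^{-(k+\ell)}$, absorbing the remainder into $c_0$ if needed). It therefore suffices to prove
\[
\sup_{z\in B(z_0,\rho)}\Bigl\|\bigwedge^r\bigl(B_{t,Y}\tau(h(z))\bigr)\xi\Bigr\|\ge c_0\rho^{L_0}b^{\mu t}.
\]

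Next, write $\bigwedge^r V_\RR=\bigoplus_\chi W_\chi$ as the orthogonal decomposition into $A$-weight spaces, so that $\bigwedge^r B_{t,Y}$ acts on $W_\chi$ by $b^{t\chi(Y)}$. By the sign assumption on $\cO$, the set of weights $\chi$ with $\chi(Y)>0$ is constant on $\cO$. Since $\cC$ is compact, $\mu:=\min_{\chi(Y)>0,\,Y\in\cC}\chi(Y)>0$. Let $\pi^+$ denote the orthogonal projection onto $W^+:=\bigoplus_{\chi(Y)>0}W_\chi$. Then
\[
\|\bigwedge^r(B_{t,Y}\tau(h(z)))\xi\|\ge b^{\mu t}\|\pi^+\bigwedge^r\tau(h(z))\xi\|,
\]
and the problem reduces to a lower bound $\sup_{z\in B(z_0,\rho)}\|\pi^+\bigwedge^r\tau(h(z))\xi\|\ge c_0\rho^{L_0}$, uniform in $\xi$, $r$ and $z_0\in\cB$.

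This reduced bound is the main step, and it has two ingredients.

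\textbf{(a) Nonvanishing.} The map $z\mapsto\pi^+\bigwedge^r\tau(h(z))\xi$ is not identically zero for any nonzero $\xi$. The argument is a standard one: $H$ is unipotent and $\bigwedge^r V$ has no nonzero vector fixed by all of $G$ (because $\tau$ is irreducible, nontrivial and $G$ is semisimple, applied to $\bigwedge^r$ with $1\le r<d$; here we must check that $\bigwedge^r V_\RR$ has no $G$-invariant vector pointwise, which holds since $\tau(G)$ acts irreducibly on $V$). If the $H$-orbit of $\xi$ lay in $W^{\le0}$, then the Zariski closure argument with the opposite group $H^-$ and the centralizer $Z(A)$ ($G$ being generated by $H$, $H^-$ and $Z(A)$ up to finite index) would force $\xi$ into a $G$-invariant subspace on which $Y$ acts with nonpositive weights. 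Such a subspace has trace zero for the semisimple action, so $Y$ acts there by $0$, which gives a contradiction via the Jacobson–Morozov/weight argument. I expect this to be the delicate point.

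\textbf{(b) Uniformity.} Coordinates of $\pi^+\bigwedge^r\tau(h(z))\xi$ are polynomials in $z$ of bounded degree $L_0$ (because $\exp$ on $\fh$ is polynomial), with coefficients linear in $\xi$. The set of decomposable $\xi\in\bigwedge^r(\tau(g)\widehat\cL)$ is discrete with $\|\xi\|\ge c(g)>0$, so after normalizing $\xi/\|\xi\|$ we range over a compact set of unit vectors. On the compact space of pairs ($\xi$ a unit vector, $z_0\in\overline\cB$), the polynomial $P_{\xi}$ is nonzero by (a). By compactness of the space of nonzero polynomials of degree $\le L_0$ with unit coefficient norm, together with the equivalence of norms on finite-dimensional spaces of polynomials restricted to balls (a Remez/Markov type inequality, $\sup_{B(z_0,\rho)}|P|\ge c\rho^{L_0}\sup_{B(z_0,1)}|P|$), we obtain the $\rho^{L_0}$ factor. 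Finally, $\sup_{B(z_0,1)}\|P_\xi\|$ is bounded below by continuity and compactness in $(\xi,z_0)$. Scaling back by $\|\xi\|\ge c(g)$ gives $c_0(g,\cB)$, completing the proof.
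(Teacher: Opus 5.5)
Your treatment of $A_{k,\ell}$ is fine: the smallest eigenvalue of $\bigwedge^rA_{k,\ell}$ is $b^{r(k-\ell)/d-k}\ge b^{-(k+\ell)}$. Your extraction of the factor $\rho^{L_0}$ is also fine. The gap is step (a): it is false, and the compactness argument in (b) collapses with it. Set $\cP_{r,+}(\xi)(z)=\pi^{+}\bigwedge^r\tau(h(z))\xi$. The kernel of $\xi\mapsto\cP_{r,+}(\xi)$ is exactly $(\bigwedge^rV_{\RR})^{G^e}$, where $G^e$ is the product of the real simple factors on which $t\mapsto\exp(tY)$ is nontrivial. Your highest-weight/trace argument, carried out correctly, ends with ``$Y$ acts by $0$ on this irreducible summand, hence $G^e$ acts trivially on it'', which is not a contradiction. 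This kernel is nonzero in general, for three independent reasons.
\begin{itemize}
\item Irreducibility of $V$ does not exclude invariants in $\bigwedge^rV$. For $\SL_3$ one has $m=1$ and $V=\fsl_3$, and the Cartan form $(x,y,z)\mapsto\tr(\ad x\,\ad[y,z])$ gives a $G$-fixed vector in $\bigwedge^3V_{\RR}$, with $3<8$.
\item $V$ is only $\QQ$-irreducible, so $V_{\RR}$ may split over $\RR$.
\item $Y$ may vanish on some real simple factors, so that $G^e\neq G$.
\end{itemize}

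Restricting to decomposable $\xi$ does not rescue compactness. As soon as $V_{\RR}$ has a $G^e$-invariant $r$-plane, there are real decomposable vectors in the kernel, and normalized lattice vectors $\xi/\|\xi\|$ accumulate on them. In fact the estimate your argument would give, with $\mu$ equal to the minimal positive weight $\mu_0$, is false. Take $\bG=\mathrm{Res}_{\QQ(\sqrt2)/\QQ}\SL_2$ and $Y$ supported on the first factor of $G=\SL_2(\RR)^2$, so that $V=\fg$ and $\mu_0=1$. Take $r=1$ and $\xi$ a fixed integer multiple of $(1+\sqrt2)^{-n}E_0$, with $E_0$ upper nilpotent. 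The first component of $\xi$ has norm $\asymp\|\xi\|^{-1}$ and is $H$-fixed, while the second component is untouched by $B_{t,Y}\tau(h(z))$. So the supremum is $\asymp\max\{b^{t}\|\xi\|^{-1},\|\xi\|\}$, which is $\asymp b^{t/2}$ when $\|\xi\|\asymp b^{t/2}$.

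The missing ideas are arithmetic, and there are three of them.
\begin{itemize}
\item Nonvanishing holds only for nonzero decomposable $\xi_0\in\bigwedge^r\cL$. If such a $\xi_0$ were $G^e$-fixed, the $\QQ$-subspace $\{v:v\wedge\xi_0=0\}$ would be stabilized by $G^e$. Its $\QQ$-stabilizer would then contain an absolutely simple factor of $\bG$, hence by Galois transitivity (from $\QQ$-simplicity) all of $\bG$, contradicting $\QQ$-irreducibility of $V$.
\item This must be made quantitative via Liouville's inequality. One uses that $\cP_{r,+}$ has real algebraic coefficients with respect to a $\ZZ$-basis of $\bigwedge^r\cL$, which gives $\|\cP_{r,+}(\xi)\|_{\coeff}\gg_g\|\xi\|^{-d_0}$. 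This transports through $\bigwedge^r\tau(g)$ because $\bigwedge^r\tau(g)$ preserves the kernel.
\item You must compensate with the nonnegative-weight projection. Its associated linear map $\xi\mapsto\cP_{r,\ge0}(\xi)$ is injective, by the same trace argument, so $\|\cP_{r,\ge0}(\xi)\|_{\coeff}\gg\|\xi\|$.
\end{itemize}
Together these give that the supremum is $\gg_{g,\cB}\rho^{L_0}b^{-(k+\ell)}\max\{b^{\mu_0t}\|\xi\|^{-d_0},\|\xi\|\}\ge\rho^{L_0}b^{\mu_0t/(d_0+1)-(k+\ell)}$, so $\mu=\mu_0/(d_0+1)$. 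This Diophantine loss is exactly why the statement's $\mu$ is not the minimal positive weight, and why $c_0$ depends on $g$.
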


 We divide the proof into several steps as in the following paragraphs.

\subsubsection{Step 1: The projections onto positive and nonnegative weight spaces.}

Let $Y\in\cC$. For each $1\leq r<d$, let $\pi_{r,+}$ and $\pi_{r,\ge0}$ be the projections onto the positive and nonnegative $Y$-weight spaces of $\bigwedge^rV_{\RR}$. Since any weight occurring in $\bigwedge^rV_{\RR}$ has constant sign on $\cC$, these two orthogonal projections are independent of $Y$.

Consider the two linear maps from $\bigwedge^rV_{\RR}$ to finite-dimensional polynomial spaces
\begin{subequations}
    \begin{align}
    \cP_{r,+}:v\mapsto \cP_{r,+}(v),\qquad \text{where}\quad \cP_{r,+}(v)(z)&=\pi_{r,+}\bigl(\bigwedge\nolimits^r\tau(h(z))\bigr)v;\\
\cP_{r,\geq 0}:v\mapsto \cP_{r,\geq 0}(v),\qquad \text{where}\quad \cP_{r,\geq 0}(v)(z)&=\pi_{r,\ge0}\bigl(\bigwedge\nolimits^r\tau(h(z))\bigr)v.
    \end{align}
\end{subequations}
They are polynomial because $H$ is a unipotent subgroup. Their degrees are bounded by a common integer $L_0$ as $r$ varies, and their coefficient matrices have algebraic entries (with respect to a $\QQ$-basis of $\bigwedge^rV$ and a basis of weight spaces whose coordinates are in $\overline{\QQ}\cap \RR$).

Let $G^e$ be the connected product of the real simple factors on which the flow $t\mapsto\exp(tY)$ is nontrivial. We show that:

\begin{lemma}\label{L:check-kernels} One has
\begin{equation}\label{E:positive-kernels}
\ker\cP_{r,+}=\left(\bigwedge\nolimits^rV_{\RR}\right)^{G^e};
\qquad
\ker\cP_{r,\ge0}=\{0\}.
\end{equation}
\end{lemma}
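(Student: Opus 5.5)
Both identities will be obtained from one principle: a vector $v\in W:=\bigwedge^rV_{\RR}$ is killed by $\cP_{r,+}$ exactly when its whole $H$-orbit avoids the positive $Y$-weight spaces, and such a vector is forced to be fixed by a large subgroup. Since the polynomial $z\mapsto\pi_{r,+}\bigl(\bigwedge^r\tau(h(z))\bigr)v$ vanishes identically iff its values vanish for all $z$, we have
\[
v\in\ker\cP_{r,+}\iff \bigwedge\nolimits^r\tau(h)v\in W_{\le 0}\quad\text{for all }h\in H,
\]
where $W_{\le0}$ is the sum of the nonpositive weight spaces. The same reformulation holds for $\cP_{r,\ge0}$ with $W_{<0}$ in place of $W_{\le0}$.

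\emph{The inclusion $\supseteq$ for $\cP_{r,+}$.} Every root space $\fg_\alpha$ with $\alpha(Y)\neq0$ lies in $\Lie(G^e)$, because on the other simple factors $\ad Y$ vanishes. Hence $H\subseteq G^e$. A $G^e$-invariant vector is also killed by $\exp(tY)\in G^e$, so it lies in the zero weight space. Therefore $\bigwedge^r\tau(h)v=v\in W_0$ for every $h\in H$, and $v\in\ker\cP_{r,+}$.

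\emph{The inclusion $\subseteq$ for $\cP_{r,+}$.} Let $v\in\ker\cP_{r,+}$, so that $Hv\subseteq W_{\le0}$.
\begin{itemize}
\item Let $P^-$ be the parabolic subgroup generated by the centralizer $Z_G(Y)$ and the contracting horospherical group $H^-$. Its Lie algebra is $\bigoplus_{\alpha(Y)\le0}\fg_\alpha$, so $P^-$ preserves $W_{\le0}$.
\item The set $\Omega:=HP^-$ is Zariski-open and dense in $G$ by the Bruhat decomposition, and it satisfies $\Omega v\subseteq HW_{\le0}$.
\item This by itself is not yet enough, so I would use the standard dynamical argument. For $h\in H$, the curve $\exp(-tY)h\exp(tY)\cdot v$ tends to the $W_0$-component of $v$ as $t\to+\infty$. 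Indeed, $\exp(tY)$ acts on $W_{\le0}$ by contraction away from $W_0$, while conjugation drives $h$ to $e$.
\item A cleaner route is via invariance under unipotent subgroups. The function $h\mapsto\pi_{+}(hv)=0$ for all $h$, and I would use that $W_{>0}$ is spanned by $\ad(\fh)$-translates of the positive parts. Concretely, decompose $v=v_0+v_-$. Applying the Lie algebra elements $X\in\fg_\alpha\subseteq\fh$ repeatedly gives $\pi_{+}(X^{n}v)=0$ for all $n$, and hence information about the $\fh$-action.
\item The cleanest argument is Zariski density. The set $\{g\in G:\pi_+(gv)=0\}$ is Zariski closed and contains $HP^-_v$, where $P^-_v$ is the stabilizer piece. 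I expect it is better to argue through the $\fsl_2$-triples below.
\end{itemize}

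\emph{Key step via $\fsl_2$-triples.} Take a root $\alpha$ with $\alpha(Y)>0$ and an $\fsl_2$-triple $(E_\alpha,F_\alpha,H_\alpha)$ with $E_\alpha\in\fh$ and $H_\alpha\in\fa$ proportional to the coroot. Consider $\exp(sE_\alpha)v$ for $s\in\RR$. Its lowest-weight components of $v$ are pushed upward, and the $\fsl_2$-theory says that $E_\alpha^n$ maps the $(-j)$-part of any string isomorphically onto the $(+j)$-part. Since the $Y$-weight increases by $n\alpha(Y)>0$, any nonzero component of $v$ of weight $<0$ or $=0$ that is not $E_\alpha$-invariant is eventually sent to positive $Y$-weight.
\begin{itemize}
\item I would order the components by $Y$-weight and take the top string to make this rigorous.
\item The conclusion should be that $v$ is killed by all $E_\alpha$ with $\alpha(Y)>0$ and lies in $W_0$.
\item Then $\fsl_2$-theory applied to the triple gives that $v$ has $H_\alpha$-weight $\le0$. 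Combined with $W_0$ and $E_\alpha v=0$, this forces the weight to be $0$, and hence $F_\alpha v=0$.
\item These triples generate $\Lie(G^e)$, since the $\fg_{\pm\alpha}$ for $\alpha(Y)\neq0$ generate the ideal of the factors on which $Y$ is nontrivial.
\item Hence $v$ is $G^e$-invariant, because $G^e$ is connected.
\end{itemize}

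\emph{The identity $\ker\cP_{r,\ge0}=\{0\}$.} The same argument with $W_{<0}$ forces $v\in W_{<0}\cap W_0=\{0\}$. This works as follows: the weight-lowering step shows that each top component must be $E_\alpha$-invariant, and the $\fsl_2$ weight argument then pushes it to weight $\ge0$. That is a contradiction unless $v=0$.

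\emph{Main obstacle.} The hard part is making the first bullet of the key step rigorous. One has to show that $\pi_{+}(\exp(sE_\alpha)v)\equiv0$ forces $E_\alpha v_{\max}=0$ for the top-$Y$-weight component $v_{\max}$. The difficulty is that a single $E_\alpha$ may not raise the weight past $0$ in one step. I would handle this by looking at the coefficient of the highest power $s^n$ in $\exp(sE_\alpha)v$, with $n$ maximal such that $E_\alpha^n v\neq0$. By $\fsl_2$-symmetry this coefficient has $H_\alpha$-weight $\ge$ minus the starting weight. Then I would use $\alpha$ to compare the $H_\alpha$-weights with the $Y$-weights.
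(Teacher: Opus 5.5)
\textbf{There is a genuine gap.} Your reformulation of the kernels through $H$-orbits, and your proof of the inclusion $\supseteq$, are fine. The key step of $\subseteq$, however, rests on a false claim.

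The $\fsl_2$-symmetry of $(E_\alpha,F_\alpha,H_\alpha)$ controls $H_\alpha$-weights, not $Y$-weights. It only guarantees that a vector of restricted weight $\chi$ with $\chi(H_\alpha)<0$ reaches weight $s_\alpha\chi=\chi-\chi(H_\alpha)\alpha$ under powers of $E_\alpha$. The $Y$-value there, $\chi(Y)-\chi(H_\alpha)\alpha(Y)$, can still be negative, and the string may end there. Concretely, for $\bG=\SL_3$ the paper's $V$ is the adjoint representation. Take $r=1$, $Y=\diag(1,0,-1)$ (up to scaling), $\alpha=\varepsilon_1-\varepsilon_2$ and $v=E_{31}$. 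Then $\exp(s\,\ad E_{12})E_{31}=E_{31}-sE_{32}$ has only the $Y$-weights $-2,-1$ for every $s$, although $\ad(E_{12})E_{31}\neq0$.

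So the implication in your ``main obstacle'' paragraph, $\pi_+(\exp(sE_\alpha)v)\equiv0\Rightarrow E_\alpha v_{\max}=0$, is false. No analysis of the top coefficient in $s$, one root at a time, can establish it.

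The step after that is also unsupported. $E_\alpha v=0$ only gives $H_\alpha$-weights $\ge0$. For a single root, $Y$-weight $0$ together with $E_\alpha v=0$ does not give $F_\alpha v=0$: with $Y=\diag(1,1,-2)$, $v=E_{12}$ and $\alpha=\varepsilon_1-\varepsilon_3$, one has $[E_{13},E_{12}]=0$ but $[E_{31},E_{12}]=E_{32}\neq0$. Your argument for $\ker\cP_{r,\ge0}=\{0\}$ inherits the same confusion between $H_\alpha$- and $Y$-weights.

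\textbf{What is missing.} Two things are needed: choosing the test vector by maximality over the whole kernel, using all of $\fh$ at once; and a global weight-balancing input.

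Since $\ker\cP_{r,+}$ is invariant under $H$ (hence under $\fh$) and under $\exp(\RR Y)$, it splits into $Y$-weight spaces. Take a nonzero $w$ in the kernel whose $Y$-weight $\lambda$ is maximal among the weights occurring in the kernel. For $\alpha(Y)>0$, $\fg_\alpha w$ lies in $\ker\cP_{r,+}$ in weight $\lambda+\alpha(Y)>\lambda$, so it vanishes and $\fh w=0$. Also $\pi_{r,+}(w)=0$ gives $\lambda\le0$.

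The paper runs this inside each $G$-irreducible summand $W$ of $\bigwedge^rV_{\RR}$; the kernel splits along any such decomposition. By PBW, $W=\cU(\fh^-)\cU(\fh^0)w$, so all $Y$-weights of $W$ are $\le\lambda\le0$. Then $\operatorname{tr}\,\mathrm{d}(\bigwedge^r\tau)(Y)|_W=0$ forces $Y|_W=0$, so the ideal annihilating $W$ contains $\Lie(G^e)$. For $\cP_{r,\ge0}$ one gets $\lambda<0$, which contradicts the trace outright. Working componentwise is also what passes from a single good vector to the whole kernel, a point your sketch does not address.

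If you want to keep the $\fsl_2$ approach, the substitute for the trace is the fact that $Y$ is a positive combination of the coroots $H_\alpha$ with $\alpha(Y)>0$. This follows from $B(Y,\cdot)=\sum_\alpha m_\alpha\alpha(Y)\alpha$, where $B$ is the Killing form and $m_\alpha$ are the root multiplicities. Apply it to an $\fa$-weight vector annihilated by all of $\fh$; such a vector exists because the kernel is also $A$-stable.
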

\begin{proof}
Since the map $z\mapsto h(z)$ parametrizes $H$, we see that 
\begin{subequations}
    \begin{align*}
    \ker\cP_{r,+}&=\left\{v\in \bigwedge^rV_{\RR}: \pi_{r,+}\bigl(\bigwedge\nolimits^r\tau(h)\bigr)v=0,\;\forall h\in H\right\};\\
\ker\cP_{r,\geq 0}&=\left\{v\in \bigwedge^rV_{\RR}: \pi_{r,\geq 0}\bigl(\bigwedge\nolimits^r\tau(h)\bigr)v=0,\;\forall h\in H\right\}.
    \end{align*}
\end{subequations}
These kernels are invariant under the actions of $H$ and $\exp(\RR Y)$. Moreover, we decompose $\bigwedge^rV_{\RR}$ into a direct sum of $G$-irreducible components. Since the equivariant projection commutes with the weight projections $\pi_{r,+},\;\pi_{r,\geq 0}$ and the actions $\bigwedge^r\tau(h)\;(h\in H)$, we see that these kernels inherit the direct sum decomposition of $\bigwedge^rV_{\RR}$. It suffices to show that: for each $G$-irreducible component $W$ of $\bigwedge^rV_{\RR}$, one has \begin{equation}\label{E:positive-kernels-irred}
    \ker\cP_{r,+}\cap W=\begin{cases}
        W, & \text{ if }G^e\text{ acts trivially on }W\\
        \{0\}, & \text{ otherwise }
    \end{cases};
\qquad
\ker\cP_{r,\ge0}\cap W=\{0\}.
\end{equation}

First, we suppose that $G^e$ acts trivially on $W$. Since both $H$ and $\exp(\RR Y)$ are contained in $G^e$, we see that for any $v\in W$ and any $h\in H$, $$
\pi_{r,+}\bigl(\bigwedge\nolimits^r\tau(h)\bigr)v=\pi_{r,+}(v)=0,
$$
that is, $v\in \ker\cP_{r,+}$. This verifies that $W\subseteq \ker\cP_{r,+}$.

Next, we suppose that $\ker\cP_{r,+}\cap W\neq \{0\}$. Consider the action $\mathrm{d}(\bigwedge^r\tau)(Y)$ on $W$: it gives an eigenspace decomposition $$
W=\bigoplus_{\lambda'\in\RR}W_{\lambda'},\qquad \text{where}\quad W_{\lambda'}:=\ker\left(\mathrm{d}(\bigwedge^r\tau)(Y)\big|_W-\lambda'\cdot\id\right).
$$
Since the subspace $\ker\cP_{r,+}$ is invariant under the $Y$-action, we see that it inherits the above eigenspace decomposition. Take $$\lambda:=\max\{\lambda':\ker\cP_{r,+}\cap W_{\lambda'}\neq \{0\}\}.$$
We claim that $\lambda=\max\{\lambda':W_{\lambda'}\neq \{0\}\}$. In fact, let us take any nonzero $w\in \ker\cP_{r,+}\cap W_{\lambda}$. Since the subspace $\ker{\cP_{r,+}}$ is invariant under the $\fh$-action, by maximality of $\lambda$, we have for any root $\alpha$ with $\alpha(Y)>0$ and any $z\in\fg_{\alpha}$, $$
\mathrm{d}(\bigwedge^r\tau)(z)(w)\in \ker{\cP_{r,+}}\cap W_{\lambda+\alpha(Y)}=\{0\}.
$$
This implies that the $\fh$-action kills $w$. Write $$
\fh^0=\fg_0\oplus\bigoplus_{\alpha(Y)=0}\fg_{\alpha},\qquad \fh^-=\bigoplus_{\alpha(Y)<0}\fg_{\alpha}.
$$
It follows from the irreducibility of $W$ and Poincaré-Birkhoff-Witt Theorem (see \cite[Theorem 3.8]{Knapp02}) that $W=\cU(\fg)w=\cU(\fh^-)\cU(\fh^0)w$, where $\cU(\cdot)$ is the universal enveloping algebra. In particular, we obtain that $\max\{\lambda':W_{\lambda'}\neq \{0\}\}=\lambda$.

Moreover, since $w\in \ker\cP_{r,+}$, we have $\pi_{r,+}(w)=0$, which implies that $\lambda\leq 0$. Combining this with the fact $\tr\left(\mathrm{d}(\bigwedge^r\tau)(Y)\big|_W\right)=0$, we conclude that $$
\mathrm{d}(\bigwedge^r\tau)(Y)\big|_W=0.
$$
Note that the kernel $\left\{X\in\fg: \mathrm{d}(\bigwedge^r\tau)(X)\big|_W=0\right\}$ is an ideal in $\fg$. This forces that $G^e$ acts trivially on $W$.

Finally, we suppose that $\ker\cP_{r,\geq 0}\cap W\neq \{0\}$. Then the same arguments as above yield $$
\max\{\lambda':W_{\lambda'}\neq \{0\}\}<0.
$$
This is impossible because $\tr\left(\mathrm{d}(\bigwedge^r\tau)(Y)\big|_W\right)=0$. This completes the proof.
\end{proof}

\subsubsection{Step 2: A quantitative lower bound $\cP_{r,+}(\xi)$ where $\xi\in \bigwedge^r(\tau(g)\widehat{\cL})$ is decomposable.}\label{SS:quan-lower-bound} Let $1\leq r<d$. We first show that: \begin{equation}\label{E:decomp-nonkernel}
    \cP_{r,+}(\xi)\neq 0\qquad\text{for any nonzero rational decomposable }r\text{-vector }\xi.
\end{equation}
In fact, suppose that there exists some nonzero rational decomposable $r$-vector $\xi$ such that $\cP_{r,+}(\xi)=0$. It follows from Lemma \ref{L:check-kernels} that $G^e$ fixes $\xi$. Consider the $r$-dimensional $\QQ$-subspace $W(\xi):=\{v\in V: v\wedge\xi=0\}$. Then $G^e$ preserves $W(\xi)_{\RR}$.

Consider the almost direct product of all connected, absolutely simple, normal factors of $\bG_{\overline{\QQ}}$, say $\bG_{\overline{\QQ}}=\bG_1\cdots\bG_s.$ Since $G^e$ is nontrivial, we see that for some $1\leq i\leq s$, $$
\Lie(\bG_i)_{\CC}\subseteq \Lie(G^e)_{\CC}.
$$
Let $\bS:=\bStab_{\bG}(W(\xi))$. It is clear that $\bS$ is a $\QQ$-subgroup of $\bG$, and $G^e\subseteq \bS_{\RR}$. Then we have $\Lie(\bG_i)_{\CC}\subseteq \Lie(\bS)_{\CC}$, and hence $(\bG_i)_{\CC}\subseteq \bS_{\CC}$. In particular, $\bG_i\subseteq \bS_{\overline{\QQ}}$. Since $\bG$ is $\QQ$-simple, its absolutely simple factors form one orbit under $\Gal(\overline{\QQ}/\QQ)$. Thus $\bG_{\overline{\QQ}}\subseteq \bS_{\overline{\QQ}}$. Therefore, we see that $\bG=\bS=\bStab_{\bG}(W(\xi))$, which contradicts the irreducibility of $V$.

Now we prove a quantitative lower bound estimate concerning \eqref{E:decomp-nonkernel}:

\begin{lemma}
    Equip each finite-dimensional polynomial space with its coefficient norm $\|\cdot\|_{\coeff}$. Then there exist two constants $C_0>0$ and $d_0\geq 0$ such that for any $1\leq r<d$,
    \begin{equation}\label{E:algebraic-separation}
\|\cP_{r,+}(\xi_0)\|_{\coeff}\geq C_0\|\xi_0\|^{-d_0}
\quad\text{for any nonzero integral decomposable }r\text{-vector }\xi_0.
\end{equation}
\end{lemma}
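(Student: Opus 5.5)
The plan is a Liouville-type argument: combine the non-vanishing statement \eqref{E:decomp-nonkernel} with the algebraicity of the coefficients of $\cP_{r,+}$, and bound the norm of a nonzero algebraic number from below through its field norm. Since there are only finitely many $r$ with $1\leq r<d$, it suffices to treat one fixed $r$ and then take the minimum of the constants $C_0$ and the maximum of the exponents $d_0$.

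\emph{Setup.} Fix a $\ZZ$-basis of $\cL$. It is also a $\QQ$-basis of $V$, so its wedge products give a $\QQ$-basis of $\bigwedge^rV$. An integral $r$-vector $\xi_0\in\bigwedge^r\cL$ then has integer coordinates $n=(n_j)_j$ in this basis, and $\|\xi_0\|$ is comparable to $|n|_\infty$, with constants depending only on $r$ and $\cL$. By the discussion before Lemma~\ref{L:check-kernels}, the coefficient matrix of $\cP_{r,+}$ has real algebraic entries with respect to a $\QQ$-basis of $\bigwedge^rV$ and bases of the weight spaces and of the polynomial space with coordinates in $\overline{\QQ}\cap\RR$. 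Passing from that $\QQ$-basis to the one above is a rational change of basis, and the coefficient norm is equivalent to the maximum modulus of the entries. Consequently each coefficient of $\cP_{r,+}(\xi_0)$ is a linear form $\ell_i(n)=\sum_j a_{ij}n_j$, where the $a_{ij}$ are finitely many real algebraic numbers, and $\|\cP_{r,+}(\xi_0)\|_{\coeff}\asymp\max_i|\ell_i(n)|$. Let $K$ be the number field generated by all the $a_{ij}$, let $D=[K:\QQ]$, and choose an integer $N\geq1$ such that every $Na_{ij}$ is an algebraic integer.

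\emph{Key step.} Let $\xi_0$ be nonzero, integral and decomposable. It is rational, so \eqref{E:decomp-nonkernel} gives $\cP_{r,+}(\xi_0)\neq0$, and hence some $i$ satisfies $\ell_i(n)\neq0$. Put $\beta:=N\ell_i(n)$. This is a nonzero algebraic integer in $K$, so its field norm $N_{K/\QQ}(\beta)=\prod_\sigma\sigma(\beta)$ is a nonzero rational integer, and therefore $|N_{K/\QQ}(\beta)|\geq1$. Every embedding $\sigma$ of $K$ is injective, so no conjugate vanishes. Each conjugate satisfies
\[
|\sigma(\beta)|=\Bigl|\sum_j N\sigma(a_{ij})\,n_j\Bigr|\leq C|n|_\infty\leq C'\|\xi_0\|,
\]
with $C'$ depending only on the finitely many $a_{ij}$. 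Dividing the norm by the $D-1$ nontrivial conjugates gives
\[
|\ell_i(n)|\geq N^{-1}(C'\|\xi_0\|)^{-(D-1)}.
\]
A nonzero integral vector has $\|\xi_0\|$ bounded below, so we may take $C'\|\xi_0\|\geq1$ after enlarging $C'$. This yields \eqref{E:algebraic-separation} with $d_0=D-1$ and $C_0$ depending only on $N$, $C'$ and the norm-equivalence constants. Decomposability is used only to invoke \eqref{E:decomp-nonkernel}.

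\emph{Main obstacle.} The delicate point is the bookkeeping that makes the coefficients genuinely algebraic and uniform. Integrality must be taken with respect to the lattice $\cL$, which is a $\ZZ$-form of the rational structure $V$. The weight projections $\pi_{r,+}$ and the polynomial map $z\mapsto\bigwedge^r\tau(h(z))$ must be expressed in bases defined over $\overline{\QQ}\cap\RR$; this is exactly why Proposition~\ref{P:rank-one-power} and the choice of the basis of $\fh$ insisted on algebraic coordinates. Once this is in place, the Liouville bound above is routine.
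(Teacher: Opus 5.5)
Your proposal is correct and is essentially the paper's proof. Like the paper, you expand $\xi_0$ in a $\ZZ$-basis of $\bigwedge^r\cL$, use \eqref{E:decomp-nonkernel} to obtain a nonzero coefficient $\sum_j n_j a_{ij}$ with real algebraic $a_{ij}$, and apply a Liouville bound with exponent $d_0=[K:\QQ]-1$; the only difference is that you prove that bound directly through the field norm of the algebraic integer $N\ell_i(n)$, whereas the paper cites it from Waldschmidt.
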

\begin{proof}
We choose a $\ZZ$-basis $\{u_1,\cdots,u_N\}$ of $\bigwedge^r\cL$ (which is also a $\QQ$-basis of $\bigwedge^rV$), and choose a basis $\{w_1,\cdots,w_s\}$ of $\mathrm{Im}(\pi_{r,+})\subseteq \bigwedge^rV_{\RR}$ whose coordinates are in $\overline{\QQ}\cap \RR$. Write $$
\cP_{r,+}(u_j)(z)=\sum_{i,\alpha}c_{i,\alpha,j}z^{\alpha}\cdot w_i,\qquad \text{where}\quad c_{i,\alpha,j}\in \overline{\QQ}\cap\RR.
$$
For any nonzero integral decomposable $r$-vector $\xi_0$, write $\xi_0=\sum_{j=1}^Nn_ju_j$ where $n_j\in\ZZ$. It follows from \eqref{E:decomp-nonkernel} that $$
0<\|\cP_{r,+}(\xi_0)\|_{\coeff}=\max_{i,\alpha}\left|\sum_{j=1}^Nn_jc_{i,\alpha,j}\right|.
$$
Let $K_0\subseteq\RR$ be a number field containing all $c_{i,\alpha,j}$. Then by Liouville's inequality (see \cite[Lemma 3.14]{Waldschmidt92}), there exist two constants $C_0>0$ and $d_0=[K_0:\QQ]-1\geq 0$ such that $$
\left|\sum_{j=1}^Nn_jc_{i,\alpha,j}\right|\geq C_0\left(\sum_{j=1}^N|n_j|\right)^{-d_0}\qquad \text{for any }c_{i,\alpha,j}\text{ with }\sum_{j=1}^Nn_jc_{i,\alpha,j}\neq 0.
$$
This proves \eqref{E:algebraic-separation}.\end{proof}

Finally, write $\xi=T(\xi_0)$, where $T:=\covol(\cL)^{-r/d}\cdot \bigwedge^r\tau(g)$, and $\xi_0\in \bigwedge^r\cL$ is nonzero and decomposable. It follows from \eqref{E:algebraic-separation} that $\|\cP_{r,+}(\xi_0)\|_{\coeff}\geq C_0\|\xi_0\|^{-d_0}$. Since $T$ is an invertible linear map on $\bigwedge^rV_{\RR}$ preserving $\ker\cP_{r,+}$, we have 
$$
\|\cP_{r,+}(\xi)\|_{\coeff}\asymp \dist(\xi, \ker(\cP_{r,+}))\geq \|T^{-1}\|^{-1}\cdot \dist(\xi_0, \ker(\cP_{r,+}))\asymp \|T^{-1}\|^{-1}\cdot\|\cP_{r,+}(\xi_0)\|_{\coeff}.
$$
Therefore, we conclude that $\|\cP_{r,+}(\xi)\|_{\coeff}\gg_{g} \|\xi\|^{-d_0}$.

Since $\cP_{r,\geq 0}$ is an injective linear map by Lemma \ref{L:check-kernels}, we have $\|\cP_{r,\geq 0}(\xi)\|_{\coeff}\gg \|\xi\|$.

\subsubsection{Step 3: The supremum norm over a ball $B(z_0,\rho)$ and the action of $D_{t,Y}^{(k,\ell)}$.} Note that for any vector-valued polynomial $P$ with degree at most $L_0$, $$
\sup_{z\in B(z_0,\rho)}\|P(z)\|\geq c_{\cB}\rho^{L_0}\|P\|_{\coeff},\qquad \forall z_0\in \cB,\;\rho\in (0,1].
$$
In particular, by Section \ref{SS:quan-lower-bound}, for any nonzero decomposable $\xi\in \bigwedge^r(\tau(g)\widehat{\cL})$, one has \begin{subequations}\label{E:global-ball-supnorm}
    \begin{align}
        \sup_{z\in B(z_0,\rho)}\|\cP_{r,+}(\xi)(z)\|&\gg_{g,\cB}\rho^{L_0}\|\xi\|^{-d_0};\\ 
        \sup_{z\in B(z_0,\rho)}\|\cP_{r,\geq 0}(\xi)(z)\|&\gg_{\cB}\rho^{L_0}\|\xi\|.
    \end{align}
\end{subequations}

Let $\cW_+$ denote the finite collection of weights in the exterior powers that are positive throughout $\cC$. Set
$$
\mu_0:=\min_{\chi\in \cW_+}\min_{Y\in\cC}\chi(Y)>0.
$$
Consider the action of $\bigwedge^rB_{t,Y}$ on $\mathrm{Im}(\pi_{r,+})$ and $\mathrm{Im}(\pi_{r,\geq 0})$: their eigenvalues are at least $b^{\mu_0 t}$ and $1$, respectively. Then by \eqref{E:global-ball-supnorm}, we see that \begin{equation}
    \sup_{z\in B(z_0,\rho)}\left\|\bigwedge^r(B_{t,Y}\cdot \tau(h(z)))\xi\right\|\gg_{g,\cB}\rho^{L_0}\max\left\{b^{\mu_0 t}\|\xi\|^{-d_0},\;\|\xi\|\right\}.
\end{equation}
Furthermore, note that for any $v\in\bigwedge^rV_{\RR}$, one has $\|\bigwedge^rA_{k,\ell}v\|\geq b^{-(k+\ell)}\|v\|$. We conclude that \begin{equation*}
    \begin{aligned}
        \sup_{z\in B(z_0,\rho)}\left\|\bigwedge^r(D^{(k,\ell)}_{t,Y}\tau( h(z)))\xi\right\|&\gg_{g,\cB} b^{-(k+\ell)}\rho^{L_0}\cdot \max\left\{b^{\mu_0 t}\|\xi\|^{-d_0},\;\|\xi\|\right\}\\
        &\geq \rho^{L_0}b^{\mu_0t/(d_0+1)-(k+\ell)}.
    \end{aligned}
\end{equation*}
This is as desired in \eqref{E:global}. The proof is complete.

\subsection{The local estimate}\label{S:local-estimate}

In this subsection, we prove the local estimate for the supremum of $r$-vector norms, which serves as inductive steps in excluding the possibility (ii) in Proposition \ref{P:nondivergence}. Let $c_1>0$ be a constant given by Lemma \ref{L:Minkowski}. 

\begin{proposition}\label{P:local-estimate}
There exists a constant $C_1>0$ with the following property. 
    Let $B=B(z_0,\rho)$ be a ball in $\RR^{\dim\fh}$, let $k,\ell\in\NN_0$, $t\geq 0$, $Y\in\cC$, and $g\in G$. Suppose that there exists some $1\leq r<d$ and some nonzero decomposable $r$-vector $\xi\in \bigwedge^r(\tau(g)\widehat{\cL})$ such that \begin{equation}\label{E:local-condition}
        \sup_{z\in 2B}\left\|\bigwedge^r\left(D^{(k,\ell)}_{t,Y}\tau(h(z))\right)\xi\right\|\leq 1.
    \end{equation}
    Let $h\geq 0$. If $\rho b^{t-(k+\ell)-2h}\geq C_1$, then for any $z\in B$, \begin{equation}\label{E:local-conclusion}
        \min\left\{
        \lambda_1\left(D^{(k+h,\ell)}_{t,Y}\tau(h(z)g)\widehat{\cL}\right),\;\lambda_1\left(D^{(k,\ell+h)}_{t,Y}\tau(h(z)g)\widehat{\cL}\right)
        \right\}\leq \sqrt{2}c_1b^{-h/d(d-1)}.
    \end{equation} 
\end{proposition}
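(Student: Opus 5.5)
The plan is to exploit the one-parameter unipotent direction $E_\theta$ inside $H$, which the flow $B_{t,Y}$ expands at the maximal rate $b^{t}$. On this direction the representation $\tau$ has the rank-one top power $M^{2m}=c\,\be\bff^\top$ of Proposition \ref{P:rank-one-power}.

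\emph{Step 1: translate in the $E_\theta$ direction.} Since $\theta$ is the highest root and every root occurring in $\fh$ is positive on $Y$, no $\theta+\alpha$ with $\alpha(Y)>0$ is a root. Hence $E_\theta$ is central in $\fh$, and
\[
h(z+sE_\theta)=\exp(sE_\theta)h(z).
\]
Because $\theta(Y)=1$, we have $B_{t,Y}\tau(\exp(sE_\theta))=\exp(s b^{t}M)B_{t,Y}$. For $z\in B$ and $|s|\le\rho$ the points $z+sE_\theta$ lie in $2B$. So \eqref{E:local-condition} says that the polynomial
\[
p(u)=\bigwedge\nolimits^r\bigl(A_{k,\ell}\exp(uM)\bigr)v,\qquad v:=\bigwedge\nolimits^r\bigl(B_{t,Y}\tau(h(z))\bigr)\xi,
\]
satisfies $\|p(u)\|\le1$ for $|u|\le \rho b^{t}$. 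The polynomial $p$ has degree at most $2mr$, so the standard coefficient bound (equivalence of norms on a finite-dimensional polynomial space, after rescaling) gives
\[
\|\text{coeff}_j(p)\|\ll(\rho b^{t})^{-j}\qquad\text{for every }j.
\]

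\emph{Step 2: isolate the rank-one term.} Next I would extract from the top coefficients the quantity $\bigwedge^r A_{k,\ell}\,(c\,\be\bff^\top)^{[r]}v$, which equals $\be\wedge\iota_{\bff}v$ up to the scalars coming from $A_{k,\ell}$. On $\be\wedge(\cdot)$ the map $A_{k,\ell}$ has scale at least $b^{-(k+\ell)}$ relative to its action on the other coordinates. The goal is an estimate of the form
\[
\|\be\wedge\iota_{\bff}w\|\ll b^{k+\ell}(\rho b^t)^{-1}\le C_1^{-1}b^{-2h},\qquad w:=\bigwedge\nolimits^r A_{k,\ell}\,v.
\]
The complication is that $(M^{[r]})^{2m}$ contains mixed terms $M^{j_1}\wedge\cdots\wedge M^{j_r}$ besides $(M^{2m})^{[r]}$. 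I would handle this by working with the subspace $W\subseteq V_\RR$ represented by the decomposable vector $w$ instead of with the $r$-vector itself. Concretely, I would choose a basis of $W$ adapted to the $\mathrm{d}\tau(H_\theta)$-weight filtration, so that only one basis vector carries the $\bff$-component, and then apply the one-variable coefficient bound to that vector's orbit. This is where $M^{2m+1}=0$ and $M^{2m}\big|_{\bff^\perp}=0$ are used.

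\emph{Step 3: dichotomy and Minkowski.} Decompose
\[
w=\be\wedge\bff\wedge w_1+\be\wedge w_2+\bff\wedge w_3+w_4,
\]
with the $w_i$ built from the remaining orthonormal weight vectors. Step 2 controls $\|w_3\|$, and $\|w\|\le1$. Then I split into two cases.
\begin{itemize}
\item If the $\bff$-part of $w$ is small, I replace $A_{k,\ell}$ by $A_{k,\ell+h}$. This multiplies $\bff$ by $b^{h}b^{-h/d}$ and the other coordinates by $b^{-h/d}$, so the new $r$-vector has norm at most $\sqrt2\,b^{-rh/d}$.
\item Otherwise Plücker relations for the decomposable $w$ force $\be$ to lie almost in $W$, i.e.\ $w_3,w_4$ are small. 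Then I use $A_{k+h,\ell}$, which multiplies $\be$ by $b^{h/d-h}$ and the rest by $b^{h/d}$. Since $r\le d-1$, the terms containing $\be$ get factor at most $b^{-h/d}$.
\end{itemize}
Balancing the threshold between the cases (smallness $b^{-2h}$ against $b^{-h}$) gives in either case an $r$-vector of norm at most $2b^{-h/(d-1)}$ or so. Lemma \ref{L:Minkowski} with $\Delta^{1/r}$, $r\le d-1$, then yields \eqref{E:local-conclusion}, with the exponent $h/d(d-1)$ absorbing the worst case.

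\emph{Main obstacle.} I expect Step 2 to be the main difficulty: turning the polynomial coefficient bounds on the whole $r$-vector into a bound on the single rank-one component $\be\wedge\iota_{\bff}w$ despite the mixed terms. The quantitative Plücker dichotomy in Step 3 also needs care, since decomposability must convert smallness of $\bff\wedge w_3$ into $\be$ being almost in $W$.
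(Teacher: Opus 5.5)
\textbf{There is a genuine gap, and it is in Step 2.} Your Step 1 is the paper's: translate along the central direction $E_\theta$, and note that $B_{t,Y}$ rescales $M$ by $b^t$. Step 3 is also essentially right. But Step 2, which you flag yourself as the crux, is not carried out, and the mechanism you sketch does not work.

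Write $M'=D^{(k,\ell)}_{t,Y}M(D^{(k,\ell)}_{t,Y})^{-1}$, let $\zeta$ be your $w$, and let $P$ be the projection onto $W$.
\begin{itemize}
\item The hypothesis bounds only the $r$-vector $\bigwedge^r\exp(aM')\zeta$. It gives no control on the orbit $\exp(aM')w_1$ of a single basis vector of $W$, because growth in one vector can be absorbed by the others. So "apply the one-variable coefficient bound to that vector's orbit" is not available.
\item The $2m$-th coefficient $((M')^{[r]})^{2m}\zeta/(2m)!$ still contains the mixed terms $M'^{j_1}w_1\wedge\cdots\wedge M'^{j_r}w_r$, and you never control them.
\item Your target is stronger than what is needed. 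One has
\[
\|\be\wedge\iota_{\bff}\zeta\|^2=\|\zeta\|^2\bigl(\|(I-P)\be\|^2\|P\bff\|^2+\langle P\be,\bff\rangle^2\bigr).
\]
So you would also have to bound the diagonal term $\|\zeta\|\,|\langle P\be,\bff\rangle|=\|\zeta\|\,|\tr(P\be\bff^{\top}P)|$. This is exactly the part that the mixed terms entangle.
\end{itemize}

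\textbf{The missing idea: use only the first derivative and only the off-diagonal block.}
\begin{itemize}
\item Markov's inequality for $a\mapsto\bigwedge^r\exp(aM')\zeta$ on $|a|\le\rho$ gives $\|(M')^{[r]}\zeta\|\le(2mr)^2/\rho$.
\item Lemma \ref{L:offdiagonal-power}(2) gives $\|\zeta\|\,\|(I-P)M'P\|_{\op}\le\|(M')^{[r]}\zeta\|$.
\item The identity $QT^jP=(QTP)(PT^{j-1}P)+(QTQ)(QT^{j-1}P)$, with $Q=I-P$, gives by induction $\|(I-P)(M')^{2m}P\|_{\op}\le 2m\|M'\|_{\op}^{2m-1}\|(I-P)M'P\|_{\op}$.
\item Since $(M')^{2m}=c\,b^{2mt-(k+\ell)}\be\bff^{\top}$, the left side equals $|c|\,b^{2mt-(k+\ell)}\|(I-P)\be\|\,\|P\bff\|$.
\item Since $\|M'\|_{\op}\le b^t\|M\|_{\op}$, the loss $b^{(2m-1)t}$ is absorbed by $b^{2mt}$. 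This leaves
\[
\|\zeta\|\,\|(I-P)\be\|\,\|P\bff\|\le C_1\rho^{-1}b^{(k+\ell)-t}.
\]
\end{itemize}
This argument never touches the mixed terms or the trace term.

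The dichotomy then follows from exact formulas rather than Pl\"ucker relations. The $\bff$-part of $\zeta$ has norm $\|\zeta\|\|P\bff\|$, and the $\be$-free part has norm $\|\zeta\|\|(I-P)\be\|$. Hence
\[
e^{2rs/d}\bigl\|\bigwedge\nolimits^rD^+_{\bff}(s)\zeta\bigr\|^2=\|\zeta\|^2\bigl(1+(e^{2s}-1)\|P\bff\|^2\bigr),
\]
with the analogous formula for $D^-_{\be}(s)$ in terms of $\|(I-P)\be\|$. This is Lemma \ref{L:product-shrink}. Your Step 3 would go through in this form.
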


We divide the proof into several steps as in the following paragraphs.

\subsubsection{Step 1: Alternative for shrinking norms}\label{SS:linear-algebra}

We isolate some basic facts in exterior algebra, and make a useful observation concerning an alternative for shrinking vector norms under two different diagonal actions. This explains the mechanism behind Proposition \ref{P:local-estimate}.

\begin{lemma}\label{L:offdiagonal-power}
Let $V$ be an Euclidean space, $T\in\operatorname{End}(V)$, and let $P$ be an orthogonal projection onto a subspace of $V$. Then \begin{enumerate}
    \item[(1)] for any $j\in\NN$, one has
$$\|(I-P)T^jP\|_{\mathrm{op}}\leq j\|T\|_{\mathrm{op}}^{j-1}
       \|(I-P)TP\|_{\mathrm{op}}.
$$
\item[(2)] if $\zeta$ is a nonzero decomposable $r$-vector representing this subspace, then
$$
\|\zeta\|\,\|(I-P)TP\|_{\mathrm{op}}\leq \|T^{[r]}(\zeta)\|.
$$
\end{enumerate} 
\end{lemma}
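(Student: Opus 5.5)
Both parts are elementary linear algebra. I would prove (1) by induction on $j$, and (2) by expanding $T^{[r]}$ in an orthonormal frame adapted to the subspace $U:=\operatorname{Im}P$.

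For (1), the case $j=1$ is trivial. For the inductive step I would insert $I=P+(I-P)$ between $T^{j-1}$ and the last factor:
$$
(I-P)T^jP=(I-P)T^{j-1}P\cdot TP+(I-P)T^{j-1}(I-P)\cdot(I-P)TP.
$$
Taking operator norms and using $\|P\|_{\op},\|I-P\|_{\op}\le1$, the first term is at most $\|(I-P)T^{j-1}P\|_{\op}\|T\|_{\op}$. By the inductive hypothesis this is at most $(j-1)\|T\|_{\op}^{j-1}\|(I-P)TP\|_{\op}$. The second term is at most $\|T\|_{\op}^{j-1}\|(I-P)TP\|_{\op}$. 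Adding the two bounds gives $j\|T\|_{\op}^{j-1}\|(I-P)TP\|_{\op}$, as required.

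For (2), choose an orthonormal basis $e_1,\dots,e_r$ of $U$. Since $\zeta$ is decomposable and represents $U$, we have $\zeta=\pm\|\zeta\|\,e_1\wedge\cdots\wedge e_r$, using the norm on $\bigwedge^rV$ induced by the inner product. Split $Te_i=PTe_i+w_i$ with $w_i:=(I-P)Te_i\in U^\perp$. The contributions $e_1\wedge\cdots\wedge PTe_i\wedge\cdots\wedge e_r$ are multiples of $e_1\wedge\cdots\wedge e_r$. Hence
$$
T^{[r]}(\zeta)=\pm\|\zeta\|\Bigl(c\,e_1\wedge\cdots\wedge e_r+\sum_{i=1}^r e_1\wedge\cdots\wedge w_i\wedge\cdots\wedge e_r\Bigr)
$$
for some scalar $c$.

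Next I would extend $e_1,\dots,e_r$ by an orthonormal basis of $U^\perp$ and expand each $w_i$ in that basis. Each summand $e_1\wedge\cdots\wedge w_i\wedge\cdots\wedge e_r$ then lies in the span of standard wedge basis vectors in which exactly the $i$-th basis vector $e_i$ is missing. These spans are mutually orthogonal for different $i$, and all of them are orthogonal to $e_1\wedge\cdots\wedge e_r$. Moreover, $\|e_1\wedge\cdots\wedge w_i\wedge\cdots\wedge e_r\|=\|w_i\|$. By Pythagoras,
$$
\|T^{[r]}(\zeta)\|^2\ge\|\zeta\|^2\sum_{i=1}^r\|w_i\|^2.
$$
The sum $\sum_i\|w_i\|^2$ is the squared Hilbert--Schmidt norm of $(I-P)T|_U$. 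This dominates the squared operator norm of $(I-P)T|_U$, which equals $\|(I-P)TP\|_{\op}^2$. Taking square roots gives (2).

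The only point needing care is the orthogonality bookkeeping in (2), namely that the off-diagonal terms for different $i$ are orthogonal to each other and to $e_1\wedge\cdots\wedge e_r$. With the adapted orthonormal basis this is immediate.
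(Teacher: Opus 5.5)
Your proposal is correct and is essentially the paper's proof. For (1), the paper also inducts on $j$ via $I=P+(I-P)$, but inserts the splitting between the first factor $T$ and $T^{j-1}$, writing $QT^jP=(QTP)(PT^{j-1}P)+(QTQ)(QT^{j-1}P)$ with $Q=I-P$, rather than before the last factor as you do, which makes no difference. For (2), the paper uses the same orthonormal frame of $\operatorname{Im}P$ and the same projection onto $\bigwedge^{r-1}W\wedge W^\perp$; your orthogonality bookkeeping and the remark that the Hilbert--Schmidt norm dominates the operator norm simply spell out a step the paper states in one line.
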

\begin{proof}
Set $Q=I-P$. Since $P^2=P,\;Q^2=Q$, we see that for any $j\in\NN$,
$$
QT^jP=(QTP)(PT^{j-1}P)+(QTQ)(QT^{j-1}P).
$$
This gives $$
\|QT^jP\|_{\op}\leq \|QTP\|_{\op}\cdot \|T\|^{j-1}_{\op}+\|T\|_{\op}\cdot \|QT^{j-1}P\|_{\op}.
$$
Then doing induction on $j$ proves (1). 

To prove (2), we take an orthonormal
basis $w_1,\ldots,w_r$ of $W:=\mathrm{Im}(P)$ and write
$\zeta=c\cdot w_1\wedge\cdots\wedge w_r$ where $c\neq 0$. It follows that $$
T^{[r]}(\zeta)=c\cdot\sum_{i=1}^rw_1\wedge\cdots\wedge (PTw_i+QTw_i)\wedge\cdots\wedge w_r\in \bigwedge^{r}W+ \bigwedge^{r-1}W\wedge W^{\perp}.
$$
The component of $T^{[r]}(\zeta)$ in
$\bigwedge^{r-1}W\wedge W^\perp$ has squared norm
$$
|c|^2\sum_{i=1}^r\|QTw_i\|^2\geq \|\zeta\|^2\|QTP\|_{\op}^2.
$$
Therefore, we conclude that $\|T^{[r]}(\zeta)\|\geq \|\zeta\|\|QTP\|_{\op}$.
\end{proof}

Let $V$ be an Euclidean vector space of dimension $d\ge2$, and let $\be,\bff\in V$ be orthonormal. We introduce one-parameter diagonal subgroups $\{D_{\be}^-(t):t\in\RR\}$ and $\{D_{\bff}^+(t):t\in\RR\}$ in $\SL(V)$ by their actions on $V$:
\begin{subequations}
    \begin{align}
        D_{\be}^-(t)\be=e^{-(d-1)t/d}\be,\qquad
&D_{\be}^-(t)|_{\be^\perp}=e^{t/d}\id_{\be^{\perp}},
\label{E:Dfs}\\
D_{\bff}^+(t)\bff=e^{(d-1)t/d}\bff,\qquad
&D_{\bff}^+(t)|_{\bff^\perp}=e^{-t/d}\id_{\bff^{\perp}}.\label{E:Des}
    \end{align}
\end{subequations}

\begin{lemma}\label{L:product-shrink}
Let $s\geq 0$, $1\leq r<d$, let $\zeta$ be a nonzero decomposable $r$-vector, and let $P$
be the orthogonal projection onto the subspace it represents. Suppose that
\begin{equation}\label{E:product-shrink}
    \|\zeta\|\le1,\qquad
\|\zeta\|\,\|(I-P)\be\|\,\|P\bff\|\leq e^{-2s}.
\end{equation}
Then
$$
\min\left\{
e^{rs/d}\|\bigwedge\nolimits^r D_{\bff}^+(s)\zeta\|,
e^{(d-r)s/d}\|\bigwedge\nolimits^r D_{\be}^-(s)\zeta\|
\right\}<\sqrt2.
$$
\end{lemma}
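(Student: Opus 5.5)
The plan is to compute both quantities explicitly, using the orthogonal splittings of $\bigwedge^rV$ adapted to $\bff$ and to $\be$; the claimed alternative will then follow from an AM--GM type comparison.

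\emph{Rewriting the two actions.} By \eqref{E:Des} we have $D_{\bff}^+(s)=e^{-s/d}\bigl(I+(e^{s}-1)\bff\bff^{\top}\bigr)$, and by \eqref{E:Dfs} we have $D_{\be}^-(s)=e^{s/d}\bigl(I+(e^{-s}-1)\be\be^{\top}\bigr)$. Hence
$$e^{rs/d}\bigwedge\nolimits^rD_{\bff}^+(s)=\bigwedge\nolimits^r\bigl(I+(e^s-1)\bff\bff^{\top}\bigr),\qquad e^{(d-r)s/d}\bigwedge\nolimits^rD_{\be}^-(s)=e^{s}\bigwedge\nolimits^r\bigl(I+(e^{-s}-1)\be\be^{\top}\bigr).$$

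\emph{Splitting with respect to a unit vector.} For a unit vector $u$, use the orthogonal decomposition
$$\bigwedge\nolimits^rV=\bigl(u\wedge\bigwedge\nolimits^{r-1}u^\perp\bigr)\oplus\bigwedge\nolimits^ru^\perp.$$
The map $\bigwedge^r(I+(\kappa-1)uu^{\top})$ multiplies the first summand by $\kappa$ and fixes the second. Write $\zeta=c\,w_1\wedge\cdots\wedge w_r$ with $w_i$ orthonormal in $W=\mathrm{Im}(P)$, choosing $w_1$ along $Pu$. Then the first component of $\zeta$ has norm $\|\zeta\|\,\|Pu\|$, and the second has norm $\|\zeta\|\sqrt{1-\|Pu\|^2}$.

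\emph{The two exact formulas.} Put $p=\|P\bff\|$ and $q=\|(I-P)\be\|$, so that $\|P\be\|^2=1-q^2$. The splitting gives
$$e^{2rs/d}\bigl\|\bigwedge\nolimits^rD_{\bff}^+(s)\zeta\bigr\|^2=\|\zeta\|^2(1-p^2)+x^2,\qquad x:=e^{s}\|\zeta\|p,$$
and
$$e^{2(d-r)s/d}\bigl\|\bigwedge\nolimits^rD_{\be}^-(s)\zeta\bigr\|^2=e^{2s}\|\zeta\|^2\bigl(e^{-2s}(1-q^2)+q^2\bigr)=\|\zeta\|^2(1-q^2)+y^2,\qquad y:=e^{s}\|\zeta\|q.$$

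\emph{Concluding.} Hypothesis \eqref{E:product-shrink} gives $xy=e^{2s}\|\zeta\|^2pq\le\|\zeta\|\le1$, so $\min\{x^2,y^2\}\le xy\le1$. Suppose $x\le y$; the other case is symmetric. Then the first squared quantity satisfies
$$\|\zeta\|^2(1-p^2)+x^2\le 1-p^2+1.$$
If $p>0$, this is strictly less than $2$. If $p=0$, then $x=0$ and the quantity is at most $\|\zeta\|^2\le1<2$. Taking square roots yields the required bound $<\sqrt2$.

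\emph{Main obstacle.} The only delicate point is correctly identifying the norm of the $u$-component of a decomposable vector as $\|\zeta\|\,\|Pu\|$. This follows from the choice of an orthonormal basis of $W$ with $w_1$ parallel to $Pu$: writing $w_1=\langle w_1,u\rangle u+(\text{component in }u^\perp)$ with $|\langle w_1,u\rangle|=\|Pu\|$, and noting that $w_2,\dots,w_r\perp u$, gives the orthogonal split directly. Strictness of the inequality then needs only the small case check on $p$ (or $q$) carried out above.
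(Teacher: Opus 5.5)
Your proof is correct and follows the paper's argument: the paper uses the same orthogonal splitting of $\bigwedge^r V$ according to whether a basis wedge contains $\be$ (resp.\ $\bff$). This gives the same exact formulas $\|\zeta\|^2\bigl(1+(e^{2s}-1)\|P\bff\|^2\bigr)$ and $\|\zeta\|^2\bigl(1+(e^{2s}-1)\|(I-P)\be\|^2\bigr)$, and the hypothesis then bounds the smaller one through $\min\{p^2,q^2\}\le pq$, exactly as your $\min\{x^2,y^2\}\le xy$ does. Your case split on $p=0$ to get strictness is only a bookkeeping variant of the paper's observation that $\|\zeta\|(1-e^{-2s})<\|\zeta\|$.
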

\begin{proof}
By separating the standard exterior coordinates of $\zeta$ according to whether their basis wedge contains $\be$ or $\bff$, we see from \eqref{E:Dfs}\eqref{E:Des} that
\begin{align*}
e^{2rs/d}\|\bigwedge\nolimits^r D_{\bff}^+(s)\zeta\|^2
 &=\|\zeta\|^2\bigl(1+(e^{2s}-1)\|P\bff\|^2\bigr),\\
e^{2(d-r)s/d}\|\bigwedge\nolimits^r D_{\be}^-(s)\zeta\|^2
 &=\|\zeta\|^2\bigl(1+(e^{2s}-1)\|(I-P)\be\|^2\bigr).
\end{align*}
By \eqref{E:product-shrink}, the minimum of their right-hand sides is at most
$$
\|\zeta\|^2+\|\zeta\|^2(e^{2s}-1)\cdot \min\{\|P\bff\|^2,\|(I-P)\be\|^2\}<2.
$$
This is as desired.
\end{proof}

\subsubsection{Step 2: The polynomial shear and the product shrink} We retain our notations in the beginning of this section. Let $M=\mathrm{d}\tau(E_{\theta})\in \mathrm{End}(V_{\RR})$, $m=\dim_{\RR}\fg_{\theta}$, and let $\be,\bff\in V_{\RR}$ be orthonormal vectors given by Proposition \ref{P:rank-one-power}. Then by \eqref{E:rank-one-power}, one has $$
M^{2m}=c\cdot\be \bff^{\top}\quad\text{for some }c\neq 0;\qquad
M^{2m+1}=0;
$$
Let $b>1$. We have chosen an orthonormal basis of $V_{\RR}$ consisting of weight vectors, in which the first and last ones are $\be$ and $\bff$. We first collect some basic properties about $D^{(k,\ell)}_{t,Y}$-conjugacy of $M$.

\begin{lemma}\label{L:property-M'}
    Let $k,\ell\in\NN_0$, $t\geq 0$, $Y\in\cC$, and write $$
M':=D_{t,Y}^{(k,\ell)}M(D_{t,Y}^{(k,\ell)})^{-1}.
$$
Then we have \begin{enumerate}
    \item $(M')^{2m}=c\cdot b^{2mt-(k+\ell)}\be\bff^{\top}$; $(M')^{2m+1}=0$;
    \item $\|M'\|_{\op}\leq b^t\|M\|_{\op}$.
\end{enumerate}
\end{lemma}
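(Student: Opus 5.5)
Both parts reduce to tracking how the diagonal operator $D:=D^{(k,\ell)}_{t,Y}=A_{k,\ell}B_{t,Y}$ rescales $\be$ and $\bff$, together with the observation that conjugating $M$ by $B_{t,Y}$ multiplies it by $b^{t}$ exactly.

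\emph{Conjugation by $B_{t,Y}$.} Since $E_\theta\in\fg_\theta$ and $\theta(Y)=1$ after the renormalization, we have $\Ad(\exp(t(\log b)Y))E_\theta=b^{t}E_\theta$. Differentiating $\tau$ gives
$$
B_{t,Y}MB_{t,Y}^{-1}=\mathrm{d}\tau\bigl(\Ad(\exp(t(\log b)Y))E_\theta\bigr)=b^{t}M .
$$
Consequently $M'=b^{t}A_{k,\ell}MA_{k,\ell}^{-1}$, and
$$
(M')^{j}=b^{jt}A_{k,\ell}M^{j}A_{k,\ell}^{-1}\qquad\text{for every } j .
$$

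\emph{Part (1).} Taking $j=2m+1$ gives $(M')^{2m+1}=0$ immediately, since $M^{2m+1}=0$. For $j=2m$ we use $M^{2m}=c\,\be\bff^{\top}$. From the definition of $A_{k,\ell}$ and the orthonormality of $\be,\bff$:
\begin{itemize}
\item $A_{k,\ell}\be=b^{(k-\ell)/d}b^{-k}\be$;
\item $A_{k,\ell}\bff=b^{(k-\ell)/d}b^{\ell}\bff$, and $A_{k,\ell}$ is symmetric, so $\bff^{\top}A_{k,\ell}^{-1}=b^{-(k-\ell)/d}b^{-\ell}\bff^{\top}$.
\end{itemize}
Multiplying, the factors $b^{\pm(k-\ell)/d}$ cancel, leaving
$$
A_{k,\ell}\,\be\bff^{\top}A_{k,\ell}^{-1}=b^{-k-\ell}\,\be\bff^{\top}.
$$
Hence $(M')^{2m}=c\,b^{2mt-(k+\ell)}\be\bff^{\top}$.

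\emph{Part (2).} This is the step that needs care, because $A_{k,\ell}$ can have large norm, so the bound cannot come from operator-norm submultiplicativity alone. Instead we use a grading argument. Write $\be=v_1,\dots,v_d=\bff$ for the fixed orthonormal weight basis. By the weight bounds in Proposition~\ref{P:rank-one-power}, $\be$ spans the top weight space $m\lambda$, and by $\mathrm{d}\tau(Y)\be=m\lambda\be$ nothing can map into it except from lower weights. Since $M$ raises $Y$-weight by $\theta(Y)=1>0$:
\begin{itemize}
\item $M\be=0$, because $\be$ already has the maximal weight;
\item $\bff^{\top}M=0$, because no vector has weight below $-m\lambda$, so nothing maps onto the $\bff$-component.
\end{itemize}
So in the basis $v_1,\dots,v_d$, $M$ has zero $\be$-column and zero $\bff$-row. (The entry $\be\bff^{\top}$ could in principle be nonzero only when $m\lambda=1/2$, that is, when the weight gap equals $1$; it is also absent if we simply use $M\be=0$.) Conjugation by the diagonal matrix $A_{k,\ell}=\diag(a_1,\dots,a_d)$ multiplies the $(i,j)$ entry of $M$ by $a_i/a_j$. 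The entries with $i,j\notin\{1,d\}$ are unchanged. The entries in the $\be$-row (excluding column $d$) are multiplied by $b^{-k}\le 1$, and the entries in the $\bff$-column (excluding row $1$) by $b^{-\ell}\le 1$. The $(1,d)$ entry would get $b^{-k-\ell}$, which is also at most $1$.

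In every case each entry of $A_{k,\ell}MA_{k,\ell}^{-1}$ is the corresponding entry of $M$ multiplied by a factor in $[0,1]$. To turn this entrywise bound into $\|A_{k,\ell}MA_{k,\ell}^{-1}\|_{\op}\le\|M\|_{\op}$, note that the result equals $P_1MP_2$ with $P_1=\diag(b^{-k},1,\dots,1)$ and $P_2=\diag(1,\dots,1,b^{-\ell})$. Indeed the column-$1$ and row-$d$ entries vanish anyway, so the scaling factors $a_i/a_j$ on the surviving entries are exactly those of $P_1$ on the left and $P_2$ on the right. Both $P_1,P_2$ are contractions, so $\|A_{k,\ell}MA_{k,\ell}^{-1}\|_{\op}\le\|M\|_{\op}$. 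Combined with $M'=b^{t}A_{k,\ell}MA_{k,\ell}^{-1}$, this gives $\|M'\|_{\op}\le b^{t}\|M\|_{\op}$.
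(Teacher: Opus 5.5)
Your proof is correct and is essentially the paper's. Conjugating by $B_{t,Y}$ gives $M'=b^{t}A_{k,\ell}MA_{k,\ell}^{-1}$, the symmetric $A_{k,\ell}$ rescales $\be\bff^{\top}$ by $b^{-(k+\ell)}$, and in part (2) the identities $M\be=0$ and $\bff^{\top}M=0$ let you write $A_{k,\ell}MA_{k,\ell}^{-1}$ as $M$ sandwiched between two contractions.

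The one difference is where $M\be=0$ and $\bff^{\top}M=0$ come from. The paper reads them off directly from $M\cdot M^{2m}=0=M^{2m}\cdot M$ together with $M^{2m}=c\,\be\bff^{\top}$, which is cleaner than your weight argument. Two side remarks in your weight argument are inaccurate, though neither is needed. First, $\be$ need not span the top weight space; this fails for singular $Y$. Second, $M\be=0$ does not kill the $(1,d)$ entry. Your $P_1MP_2$ factorization handles that entry anyway, so the proof still goes through.
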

\begin{proof}
   (1) It is clear that the second identity follows from $M^{2m+1}=0$. For the first identity, since $\theta(Y)=1$, we observe that $$
\Ad(\exp(t(\log b)Y))E_{\theta}=b^{t\theta(Y)}E_{\theta}=b^tE_{\theta}.
$$
Then we have \begin{equation}\label{E:Bt-conjugacy}
B_{t,Y}MB_{t,Y}^{-1}=\mathrm{d}\tau\left(\Ad(\exp(t(\log b)Y))E_{\theta}\right)
=b^tM.
\end{equation}
It follows from definition that $$(M')^{2m}=A_{k,\ell}(B_{t,Y}MB_{t,Y}^{-1})^{2m}A_{k,\ell}^{-1}=b^{2mt}c\cdot A_{k,\ell}\be\bff^{\top}A_{k,\ell}^{-1}.$$
Since $A_{k,\ell}$ is self-adjoint, we check that $$
A_{k,\ell}\be\bff^{\top}A_{k,\ell}^{-1}=(A_{k,\ell}\be)(A_{k,\ell}^{-1}\bff)^{\top}=(b^{(k-\ell)/d-k}\be)(b^{-(k-\ell)/d-\ell}\bff)^{\top}=b^{-(k+\ell)}\be\bff^{\top}.
$$
These together yield $(M')^{2m}=c\cdot b^{2mt-(k+\ell)}\be\bff^{\top}$.

(2) By (1), we see that $M\be=0$ and $\bff^{\top}M=0$. Moreover, we see from \eqref{E:Bt-conjugacy} that $$
M'=A_{k,\ell}(B_{t,Y}MB_{t,Y}^{-1})A_{k,\ell}^{-1}=b^t\cdot A_{k,\ell}MA_{k,\ell}^{-1}.
$$
Note that \begin{equation}\label{E:Akl-def}
A_{k,\ell}=b^{(k-\ell)/d}(I_d+(b^{-k}-1)\be\be^{\top})(I_d+(b^{-\ell}-1)\bff\bff^{\top})^{-1}.
\end{equation}
Since $b>1$, we have $\|A_{k,\ell}MA_{k,\ell}^{-1}\|_{\op}\leq \|M\|_{\op}$, and hence $\|M'\|_{\op}\leq b^t\|M\|_{\op}$.
\end{proof}

The following result is the crucial polynomial shrink in proving the local estimate. Basically, it says that a short $r$-vector along a polynomial shear leads to a product shrink.

\begin{lemma}\label{L:polynomial-shrink}
There is a constant $C_1\ge1$ with the following property. Let $k,\ell\in\NN_0$, $t\geq 0$, and $Y\in\cC$. Suppose that there exists some $\rho>0$, some $1\leq r<d$ and a nonzero decomposable $r$-vector $\zeta\in\bigwedge^rV_{\RR}$ such that
\begin{equation}\label{E:polynomial-shrink-condition}
    \sup_{|a|\le\rho}\left\|\bigwedge^r\exp(aM')\zeta\right\|\le1.
\end{equation}
Let $P$ denote the orthogonal projection onto the $r$-dimensional subspace that $\zeta$ represents. Then one has
\begin{equation}\label{E:local-product}
\|\zeta\|\leq 1,\qquad
 \|\zeta\|\,\|(I-P)\be\|\,\|P\bff\|
 \le C_1\rho^{-1}b^{(k+\ell)-t}.
\end{equation}
\end{lemma}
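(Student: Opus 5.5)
The first bound is immediate: setting $a=0$ in \eqref{E:polynomial-shrink-condition} gives $\|\zeta\|\le 1$. For the product bound, the plan is to pull out of $\exp(aM')$ a high derivative in $a$ that isolates the rank-one piece $(M')^{2m}$ from Lemma~\ref{L:property-M'}, and then to use Lemma~\ref{L:offdiagonal-power} to turn that bound into a bound on the off-diagonal block $(I-P)(M')^{2m}P$.

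\textbf{Step 1: polynomial bound on the derivative.} Since $(M')^{2m+1}=0$, the map $a\mapsto \bigwedge^r\exp(aM')\zeta=\exp(a(M')^{[r]})\zeta$ is a vector polynomial in $a$ of degree at most $2mr$, bounded by $1$ on $[-\rho,\rho]$. By Markov's inequality (equivalence of norms on polynomials of bounded degree, rescaled to $[-\rho,\rho]$), its first derivative at $a=0$ is bounded by $C\rho^{-1}$. This gives
\[
\|(M')^{[r]}\zeta\|\le C\rho^{-1}.
\]
By Lemma~\ref{L:offdiagonal-power}(2) applied with $T=M'$, we get $\|\zeta\|\,\|(I-P)M'P\|_{\op}\le C\rho^{-1}$.

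\textbf{Step 2: passing to the $2m$-th power.} By Lemma~\ref{L:offdiagonal-power}(1) with $j=2m$, and using $\|M'\|_{\op}\le b^t\|M\|_{\op}$ from Lemma~\ref{L:property-M'}(2), we obtain
\[
\|\zeta\|\,\|(I-P)(M')^{2m}P\|_{\op}\le 2m\,b^{(2m-1)t}\|M\|_{\op}^{2m-1}\,C\rho^{-1}.
\]

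\textbf{Step 3: evaluating the rank-one operator.} By Lemma~\ref{L:property-M'}(1), $(M')^{2m}=c\,b^{2mt-(k+\ell)}\be\bff^{\top}$. Hence
\[
\|(I-P)(M')^{2m}P\|_{\op}=|c|\,b^{2mt-(k+\ell)}\,\|(I-P)\be\|\,\|P\bff\|,
\]
because $\be\bff^{\top}$ conjugated in this way is the rank-one operator $((I-P)\be)(P\bff)^{\top}$. Dividing the Step 2 bound by $|c|\,b^{2mt-(k+\ell)}$ yields
\[
\|\zeta\|\,\|(I-P)\be\|\,\|P\bff\|\le C_1\rho^{-1}b^{(k+\ell)-t},
\]
with $C_1=2mC\|M\|_{\op}^{2m-1}/|c|$. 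This constant is independent of $k,\ell,t,Y$: the degree bound, $c$, and $M$ are all fixed, and $r$ ranges over finitely many values.

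\textbf{Main obstacle.} The only delicate point is Step 1, namely the uniform Markov-type constant. The concern is that the polynomial's coefficients depend on $M'$, which is large. This is not actually a problem: the Markov inequality depends only on the degree, which is at most $2mr\le 2md$, and not on the coefficients, so the resulting constant is uniform. If one prefers to avoid Markov's inequality, Lagrange interpolation at fixed nodes in $[-\rho,\rho]$ gives the same bound $\|(M')^{[r]}\zeta\|\ll\rho^{-1}$.
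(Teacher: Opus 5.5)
Your proposal is correct and follows the paper's proof essentially step for step. The shared chain is: Markov's inequality on the degree-$\le 2mr$ polynomial $a\mapsto\bigwedge^r\exp(aM')\zeta$ to get $\|(M')^{[r]}\zeta\|\le(2mr)^2/\rho$; then Lemma~\ref{L:offdiagonal-power}(2) and (1) together with $\|M'\|_{\op}\le b^t\|M\|_{\op}$; and finally the rank-one identity $(M')^{2m}=c\,b^{2mt-(k+\ell)}\be\bff^{\top}$, which identifies $\|(I-P)(M')^{2m}P\|_{\op}$ with $|c|\,b^{2mt-(k+\ell)}\|(I-P)\be\|\,\|P\bff\|$. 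The only cosmetic fix is that the statement requires $C_1\ge1$, so take $C_1=\max\{1,\,2mC\|M\|_{\op}^{2m-1}|c|^{-1}\}$, as the paper does.
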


\begin{proof}
It is clear that $\|\zeta\|\leq 1$ by taking $a=0$ in \eqref{E:polynomial-shrink-condition}. By Lemma \ref{L:property-M'} (1), we see that \begin{equation}\label{E:perp-op}
    \begin{aligned}
        \|\zeta\|\,\|(I-P)\be\|\,\|P\bff\|&=\|\zeta\|\,\|(I-P)\be\cdot (P\bff)^{\top}\|_{\op}\\
        &=(|c|\cdot b^{2mt-(k+\ell)})^{-1}\|\zeta\|\,\|(I-P)(M')^{2m}P\|_{\op}.
    \end{aligned}
\end{equation}
It suffices to estimate the right-hand side of \eqref{E:perp-op}. By Lemma \ref{L:offdiagonal-power} and Lemma \ref{L:property-M'} (2), we have \begin{equation}\label{E:initial-estimate-M'}
    \begin{aligned}
        \|\zeta\|\|(I-P)(M')^{2m}P\|_{\op}
        &\leq 2m\|M'\|_{\op}^{2m-1}\cdot \|\zeta\|\|(I-P)M'P\|_{\op}\\
        &\leq 2mb^{(2m-1)t}\|M\|_{\op}^{2m-1}\cdot\left\|(M')^{[r]}(\zeta)\right\|.
    \end{aligned}
\end{equation}
So we only need to estimate the norm $\left\|(M')^{[r]}(\zeta)\right\|$.

Consider the vector-valued polynomial function
$$Q:[-\rho,\rho]\to \bigwedge^rV_{\RR},\qquad a\mapsto\bigwedge^r\exp(aM')\zeta.$$
Since $(M')^{2m+1}=0$, the polynomial $Q(a)$ has degree at most $2mr$. Then in view of Markov's inequality (see \cite[Section 5.1]{BE95}), we have \begin{equation*}
\left\|(M')^{[r]}(\zeta)\right\|=\|Q'(0)\|\leq \frac{(\deg{Q})^2}{\rho}\sup_{|a|\leq \rho}\|Q(a)\|\overset{\eqref{E:polynomial-shrink-condition}}{\leq} \frac{(2mr)^2}{\rho}.
\end{equation*}

Therefore, we conclude that the right-hand side of \eqref{E:perp-op} is not exceeding $$
\frac{(2m)^3(d-1)^2\|M\|_{\op}^{2m-1}}{|c|\rho}\cdot b^{(k+\ell)-t}.
$$
The constant $C_1:=\max\{1,(2m)^3(d-1)^2\|M\|_{\op}^{2m-1}|c|^{-1}\}$ is as desired.
\end{proof}

\subsubsection{Step 3: Proof of Proposition \ref{P:local-estimate}}
Now we are ready to prove Proposition \ref{P:local-estimate}. We choose the constant $C_1>0$ as in Lemma \ref{L:polynomial-shrink}.

Since the highest-root vector $E_{\theta}$ lies in the center of $\fh$, we have for any $z\in\fh$ and $a\in\RR$,
$$
 h(aE_{\theta}+z)=\exp(aE_{\theta})\cdot h(z).
$$
When $z\in B$ and $|a|\le\rho$, the point $aE_{\theta}+z$ lies in $2B$. Write $$\zeta=\bigwedge^r(D_{t,Y}^{(k,\ell)}\tau(h(z)))\xi.$$
Then by \eqref{E:local-condition}, we have \begin{equation*}
\begin{aligned}
    \sup_{|a|\le\rho}\left\|\bigwedge^r\exp(aM')\zeta\right\|
    &=\sup_{|a|\le\rho}\left\|\bigwedge^r\left(D^{(k,\ell)}_{t,Y}\exp(aM)\tau(h(z))\right)\xi\right\|\\
    &=\sup_{|a|\le\rho}\left\|\bigwedge^r\left(D^{(k,\ell)}_{t,Y}\tau(h(aE_{\theta}+z))\right)\xi\right\|
    \\
    &\leq \sup_{z'\in 2B}\left\|\bigwedge^r\left(D^{(k,\ell)}_{t,Y}\tau(h(z'))\right)\xi\right\|\leq 1.
\end{aligned}
\end{equation*}
By Lemma~\ref{L:polynomial-shrink}, this implies that $$
\|\zeta\|\leq 1,\qquad \|\zeta\|\,\|(I-P)\be\|\,\|P\bff\|
 \le C_1\rho^{-1}b^{(k+\ell)-t}.
$$
Then the condition $\rho b^{t-(k+\ell)-2h}\geq C_1$ makes the
above right-hand side at most $b^{-2h}$. Choose $s=h\log b$. Then Lemma~\ref{L:product-shrink} gives $$
\min\left\{
e^{rs/d}\|\bigwedge\nolimits^r D_{\bff}^+(s)\zeta\|,\;
e^{(d-r)s/d}\|\bigwedge\nolimits^r D_{\be}^-(s)\zeta\|
\right\}<\sqrt2.
$$
Note that
$$
 D_{\bff}^+(h\log b)D_{t,Y}^{(k,\ell)}
   =D_{t,Y}^{(k,\ell+h)},\qquad 
   D_{\be}^-(h\log b)D_{t,Y}^{(k,\ell)}
   =D_{t,Y}^{(k+h,\ell)}.
$$
Then we have $$
\min\left\{
b^{rh/d}\left\|\bigwedge\nolimits^r \left(D_{t,Y}^{(k,\ell+h)}\tau(h(z))\right)\xi\right\|,\;
b^{(d-r)h/d}\left\|\bigwedge\nolimits^r \left(D_{t,Y}^{(k+h,\ell)}\tau(h(z))\right)\xi\right\|
\right\}<\sqrt2.
$$

Finally, by Lemma \ref{L:Minkowski}, we have for any diagonal matrix $D$, $$
\lambda_1\left(
D\cdot \tau(h(z)g)\widehat{\cL}
\right)\leq c_1\cdot \left\|
\bigwedge^r(D\tau(h(z)))\xi
\right\|^{1/r}.
$$
It follows that $$
\min\left\{
b^{h/d}\cdot
\lambda_1\left(D_{t,Y}^{(k,\ell+h)}\tau(h(z)g)\widehat{\cL}\right),\;
b^{(d-r)h/(rd)}\cdot
\lambda_1\left(D_{t,Y}^{(k+h,\ell)}\tau(h(z)g)\widehat{\cL}\right)
\right\}
<\sqrt{2}c_1.$$  
The conclusion \eqref{E:local-conclusion} follows since $b^{h/(d(d-1))}\leq \min\{b^{h/d},b^{(d-r)h/(rd)}\}$. The proof is complete.

\subsection{The expanding estimate}

We finish the section with an elementary estimate on the $D^{(k,\ell)}_{t,Y}$-conjugacy of differences in the expanding horospherical subgroup $H$.

\begin{lemma}\label{L:distortion}
Let $\cB\subseteq\RR^{\dim\fh}$ be a bounded subset. Then there are
constants $C_{\cB},\epsilon_{\cB}>0$ such that for any $k,\ell\in\NN_0$, $t\geq 0$, $Y\in\mathcal C$, and any $z,z'\in\cB$ with $b^t\|z'-z\|\le\epsilon_{\cB}$, one has
\begin{equation}\label{E:distortion}
 \left\|
 D_{t,Y}^{(k,\ell)}\tau(h(z'))\tau(h(z))^{-1}
       (D_{t,Y}^{(k,\ell)})^{-1}-I
 \right\|_{\mathrm{op}}
 \le C_{\cB}b^t\|z'-z\|.
\end{equation}
\end{lemma}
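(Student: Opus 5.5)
We write $\tau(h(z'))\tau(h(z))^{-1}=\tau(h(z')h(z)^{-1})$ and reduce the estimate to a conjugation bound for unipotent elements of $H$. Since $H$ is a simply connected nilpotent group and $h=\exp$, the map $(z,z')\mapsto w(z,z'):=\log(h(z')h(z)^{-1})\in\fh$ is polynomial, by the Baker--Campbell--Hausdorff formula, which terminates on the nilpotent algebra $\fh$. It vanishes on the diagonal $z=z'$, so on the bounded set $\cB$ we have $\|w(z,z')\|\le C'_{\cB}\|z'-z\|$. Indeed, every BCH term beyond $z'-z$ is a bracket containing the factor $z'-z$ together with bounded factors from $\cB$.

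Next we decompose $w=\sum_{\alpha(Y)>0}w_\alpha$ along the restricted-root spaces. For $Y\in\cC$, the normalization $\theta(Y)=1=\max_i\theta_i(Y)$ gives $0<\alpha(Y)\le1$ for every root appearing in $\fh$. Hence
$$\Ad(\exp(t(\log b)Y))w=\sum_\alpha b^{t\alpha(Y)}w_\alpha,\qquad \|\Ad(\exp(t(\log b)Y))w\|\le b^t\|w\|,$$
where we use that the root spaces are mutually orthogonal. Conjugating by $B_{t,Y}$ then gives
$$B_{t,Y}\,\tau(\exp w)\,B_{t,Y}^{-1}=\exp\bigl(\mathrm{d}\tau(w')\bigr),\qquad w':=\Ad(\exp(t(\log b)Y))w,\quad \|w'\|\le b^t\|w\|.$$

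The main obstacle is the remaining conjugation by $A_{k,\ell}$, which is not a priori contracting on all of $\mathrm{End}(V_{\RR})$. We handle it with weights, following the proof of Lemma \ref{L:property-M'}(2). By Proposition \ref{P:rank-one-power}, $\be$ spans the top $Y$-weight space (weight $m\lambda$) and $\bff$ spans the bottom one (weight $-m\lambda$). Since $\fh$ consists of positive-weight vectors, $\mathrm{d}\tau(X)\be=0$ and $\bff^\top\mathrm{d}\tau(X)=0$ for every $X\in\fh$, because $\mathrm{d}\tau(X)$ raises the weight. Write $N:=\mathrm{d}\tau(w')$. By \eqref{E:Akl-def}, $A_{k,\ell}$ is the scalar $b^{(k-\ell)/d}$ times a product of a factor rescaling $\be$ and a factor rescaling $\bff$. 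The scalars cancel under conjugation, and the two-sided rescaling affects only the $\be$-column and the $\bff$-row. Since $N\be=0$ and $\bff^\top N=0$, every block of $N$ in the orthonormal basis splitting $V_{\RR}=\RR\be\oplus(\be,\bff)^\perp\oplus\RR\bff$ is multiplied by a factor $b^{-k}$, $b^{-\ell}$, $b^{-k-\ell}$ or $1$. We must check that no block is multiplied by $b^{k}$ or $b^{\ell}$, which is exactly what the vanishing conditions guarantee. It follows that $\|A_{k,\ell}NA_{k,\ell}^{-1}\|_{\op}\le\|N\|_{\op}$.

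Combining these steps, the operator in \eqref{E:distortion} equals $\exp(N')-I$, where $N':=A_{k,\ell}NA_{k,\ell}^{-1}$ satisfies
$$\|N'\|_{\op}\le\|\mathrm{d}\tau\|\,b^t\,C'_{\cB}\|z'-z\|.$$
Choose $\epsilon_{\cB}$ small enough that $b^t\|z'-z\|\le\epsilon_{\cB}$ forces $\|N'\|_{\op}\le1$. Then
$$\|\exp(N')-I\|_{\op}\le e\,\|N'\|_{\op}\le C_{\cB}\,b^t\|z'-z\|,$$
with $C_{\cB}:=e\,\|\mathrm{d}\tau\|\,C'_{\cB}$. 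This is \eqref{E:distortion}.
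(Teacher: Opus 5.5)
Your proposal is correct and follows the paper's proof step for step: a linear bound $\|w\|\ll_{\cB}\|z'-z\|$ for $w=\log(h(z')h(z)^{-1})$, a factor $b^t$ from conjugating by $B_{t,Y}$, the fact that conjugating by $A_{k,\ell}$ does not increase the norm, and the Taylor bound for $\exp$. You also supply the justifications the paper leaves implicit ($0<\alpha(Y)\le 1$ for the roots in $\fh$, and $N\be=0$, $\bff^{\top}N=0$ by weight considerations). One small point: multiplying blocks by factors $\le 1$ does not by itself control the operator norm, but it does here, because those two vanishing conditions give $A_{k,\ell}NA_{k,\ell}^{-1}=(I+(b^{-k}-1)\be\be^{\top})\,N\,(I+(b^{-\ell}-1)\bff\bff^{\top})$, which is $N$ multiplied on each side by a contraction.
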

\begin{proof}
Write $w=\log(h(z')h(z)^{-1})$. Since $H$ is unipotent, we see that $w$ is a well-defined polynomial in $z$ and $z'$. Write $$
T=\mathrm{d}\tau(\Ad(\exp(t(\log{b})Y))w).
$$
Note that the expression inside the norm in \eqref{E:distortion} is
$$A_{k,\ell}B_{t,Y}\tau(\exp(w))B_{t,Y}^{-1}A_{k,\ell}^{-1}-I
=A_{k,\ell}(\exp(T)-I)A_{k,\ell}^{-1}.$$
In view of \eqref{E:Akl-def}, we see that $\|A_{k,\ell}(\exp(T)-I)A_{k,\ell}^{-1}\|_{\op}\leq \|\exp(T)-I\|_{\op}$.

It is clear that there exists some $C_{\cB}>0$ depending on $\tau:\bG\to\SL(V)$ and the bounded sets $\cC,\cB$ such that for any $t\geq 0$, $Y\in\mathcal C$, and $z,z'\in\cB$, one has $$
\|T\|_{\op}\ll b^t\|w\|\leq C_{\cB}b^t\|z-z'\|.
$$
When we choose $\epsilon_{\cB}\in (0,C_{\cB}^{-1})$, by our condition, the above right-hand side is at most $1$. Then the Taylor's expansion yields
$$
 \|\exp T-I\|_{\op}\le 2\|T\|_{\op}\le 2C_{\cB}b^t\|z-z'\|.
$$
 Combining these estimates gives \eqref{E:distortion}.
\end{proof}

\section{Winning Result on a Parameter Space}\label{S:para-winning}

In this section, we establish a winning result (Theorem \ref{T:para-winning}) on the product of a parameter space and its corresponding expanding horospherical subgroup.

Let $\bG,G,A,\theta_i$, and $\theta$ be as in Proposition \ref{P:rank-one-power}, and let $\cO$ be the parameter space as in the beginning of Section \ref{S:global-local-estimates}. Then the expanding horospherical group $H=H(\{\exp(tY)\}_{t>0})$ is independent of $Y\in \cO$. Let $\Gamma\leq G$ be an arithmetic lattice. Our main result of this section is:

\begin{theorem}\label{T:para-winning}
    For any $x\in G/\Gamma$, the set \begin{equation}\label{E:para-winning}
        \left\{
        (Y,h)\in \cO\times H:\{\exp(tY)hx:t\geq 0\} \text{ is bounded in }X
        \right\}
    \end{equation}
    is HAW on $\cO\times H$.
\end{theorem}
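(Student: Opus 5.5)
Our plan is to verify the criterion of Theorem \ref{T:BNY21}(i) locally. Since HAW is a local property (Lemma \ref{L:HAW-mnfd}) and $\cO\times H$ is parametrized by $(Y,z)\mapsto(Y,h(z))$, it suffices to show the following. For every nonzero compactly supported Ahlfors regular absolutely decaying measure $\nu$ on $\cO\times\RR^{\dim\fh}$, with support inside $\cC\times\cB$ for a compact $\cC\subseteq\cO$ and a bounded $\cB$, the set \eqref{E:para-winning} meets $\supp(\nu)$. By Lemma \ref{L:Mahler}, boundedness of $\{\exp(tY)h(z)x\}$ in $X$ is equivalent to a uniform lower bound on $\lambda_1(\tau(\exp(tY)h(z)g)\widehat{\cL})$ for $t\geq0$, where $x=g\Gamma$. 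It also suffices to check this along $t\in(\log b)\NN_0$. So we need to produce a point of $\supp(\nu)$ with $\inf_t\lambda_1(B_{t,Y}\tau(h(z)g)\widehat{\cL})\geq\epsilon$ for some $\epsilon>0$. By Theorem \ref{T:BNY21}(ii), it is enough to show that the complementary-type target set is $\alpha'$-Cantor winning on $\supp(\nu)$ for some $\alpha'$ smaller than the Ahlfors exponent of $\nu$ (after rescaling the metric in the $Y$-coordinates).

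The Cantor strategy runs as follows. Fix $\beta$ and $b=\beta^{-N}$ with $N$ large, and let Bob's ball at stage $q$ have radius $\rho_0\beta^q$, corresponding to time scale $t\approx q/N$ in units of $\log b$. At each stage $q$, Alice deletes balls covering the ``dangerous'' points. These are points $(Y,z)$ in $B_q\cap\supp(\nu)$ where $\lambda_1(D^{(k,\ell)}_{t,Y}\tau(h(z)g)\widehat{\cL})<\epsilon$ for the relevant $t$ and for the family of auxiliary matrices $D^{(k,\ell)}_{t,Y}$ with $k+\ell$ comparable to a fixed fraction of $t$. To count these balls, we apply Proposition \ref{P:nondivergence} to the real analytic map $(Y,z)\mapsto B_{t,Y}\tau(h(z)g)$ (diagonal conjugation handles the $A_{k,\ell}$ factor). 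The analyticity in $Y$ is fine because $B_{t,Y}$ is an exponential in $Y$. If alternative (i) holds, Lemma \ref{L:BNY-cover} converts the small measure bound $C_0(\epsilon/\rho)^{\alpha_0}\nu(3B)$ into at most $\beta^{-\alpha'(i+1)}$ balls at the required scales. Lemma \ref{L:distortion} ensures that these balls, of radius $\approx b^{-t}$, control $\lambda_1$ uniformly on each small ball, since the $Y$-variation over such a ball only changes $B_{t,Y}$ by a bounded factor.

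The main work is excluding alternative (ii), a short decomposable $r$-vector $\xi$ with $\sup_B\|\bigwedge^r D\tau(h(z))\xi\|<\rho$. We argue by induction on the stage. At early stages, the global estimate (Proposition \ref{P:global-estimate}) gives $\sup\geq c_0\rho^{L_0}b^{\mu t-(k+\ell)}$, which contradicts (ii) once $k+\ell\leq\mu t/2$ and $\rho$ is fixed. At later stages, suppose (ii) held on a ball of radius $\rho\approx b^{-t+(k+\ell)+2h}C_1$. Then Proposition \ref{P:local-estimate} forces, for every point of $B$, that $\lambda_1\leq\sqrt2 c_1b^{-h/d(d-1)}$ for $D^{(k+h,\ell)}$ or $D^{(k,\ell+h)}$ at the same $t$. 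These are points Alice has already removed at an earlier stage, where the index pairs $(k+h,\ell)$ and $(k,\ell+h)$ are included in her bookkeeping. That would contradict Bob's ball surviving. The delicate point, and the main obstacle, is arranging the indices $(k,\ell,h,t)$ so that each such pair was treated at a strictly earlier stage with the right radius. We also need the total number of deleted balls per level to stay below $\beta^{-\alpha'(i+1)}$ uniformly in $Y$. We will handle this with a triangular schedule in which $k+\ell$ grows linearly with $q$ and $h$ is a fixed large multiple, summing over polynomially many index pairs, which is absorbed by choosing $N$ large.

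Once Cantor winning is established, $\supp(\nu)$ meets the bounded set, so Theorem \ref{T:BNY21}(i) gives that the set is HAW. A non-arithmetic $\Gamma$ would be reduced beforehand via Margulis arithmeticity or rank one, as in Section \ref{S:reductions}; here we assume $\Gamma$ is arithmetic as stated.
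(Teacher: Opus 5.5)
Your outer framework matches the paper's. You reduce via Theorem \ref{T:BNY21}(i) to meeting $\supp(\nu)$, convert boundedness into a lower bound on $\lambda_1$ via Lemma \ref{L:Mahler}, obtain nonemptiness from $\alpha'$-Cantor winning via Theorem \ref{T:BNY21}(ii), and use the global and local estimates to rule out alternative (ii) of Proposition \ref{P:nondivergence}. But the step you defer as ``the main obstacle'' is the actual content of the proof, and what you do specify would fail.

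(1) You use one fixed threshold $\epsilon$ for all dangerous sets. Alternative (i) then gives a measure fraction $C_0\epsilon^{\alpha_0}$ that does not decay with the delay. So the cover from Lemma \ref{L:BNY-cover} at delay $i$ has about $\epsilon^{\alpha_0}\beta^{-\alpha(i+1)}$ balls, exceeding the allowance $\beta^{-\alpha'(i+1)}$ once $i$ is large. Large delays are unavoidable: the local estimate needs the current radius $\gtrsim b^{-t+(k+\ell)+2h}$, the deletion radius is $\approx b^{-t}$, and $k+\ell$ must be unbounded. The paper uses thresholds $b^{-\eta(k+\ell)}$ at delay $u(k+\ell)$, giving $\ll b^{(u\alpha-\eta\alpha_0/2)(k+\ell)}$ balls and $\alpha'=\alpha-\eta\alpha_0/(4u)$. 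Decaying thresholds in turn force $h$ to grow with the index, since $\sqrt2c_1b^{-h/d(d-1)}$ must beat $b^{-\eta(k+\ell+h)}$. Hence $h=\lfloor\kappa(k+\ell)\rfloor$ with $\eta\ll\kappa/d(d-1)$, while the local and global estimates bound $\kappa$ from above, so it is a small fraction rather than a ``large multiple''.

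(2) Applying Proposition \ref{P:nondivergence} to $(Y,z)\mapsto B_{t,Y}\tau(h(z)g)$ is not legitimate, because its constants depend on the analytic map, which here depends on $t$. The paper freezes $Y$ at the center $\omega_q$ of $B_q$, so $h_0=\tau(h(z)gg_0)$ and $D=D^{(k,\ell)}_{t,Y(\omega_q)}$ are fixed. Across $B_q$, $B_{t,Y}$ varies by $b^{O(\rho_0(k+\ell))}$, not a bounded factor; Lemma \ref{L:rho_0_0} caps this at $b^{\eta(k+\ell)/2}$, which is affordable only because the thresholds decay.

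The missing idea is the indexing $t=q+u(k+\ell)$. The raw dangerous set $\cD^{(k,\ell)}_q$ concerns time $t$, is examined on $B_q$, and is deleted at radius $\rho_t$. So for fixed $t$ a \emph{larger} index sum means an \emph{earlier} stage, and $(k+h,\ell)$, $(k,\ell+h)$ at time $t$ are exactly stage $q-uh$. A schedule where $k+\ell$ grows with $q$ does not put the shifted indices at earlier stages.

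The global estimate is also not about ``early stages with $\rho$ fixed''. Since the radius is $\rho_q=\rho_0b^{-q}$, the bound $c_0(2\rho_q)^{L_0}b^{\mu t-(k+\ell)}$ is $\ge1$ only when $q$ is small relative to $k+\ell$. Its role is precisely to show $q>uh$, i.e.\ that stage $q-uh$ exists.

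Finally, there is no contradiction with Bob's ball surviving. Balls announced at stage $q-uh$ for index $(k+h,\ell)$ have radius $\rho_t$ and constrain Bob only from stage $t$ on, so $B_q$ may lie entirely inside them. You need the current dangerous sets $\widetilde\cD^{(k,\ell)}_q$, with points flagged at earlier stages removed. Alternative (ii) forces $\cK\cap B_q$ into earlier raw dangerous sets, so $\widetilde\cD^{(k,\ell)}_q=\varnothing$ and nothing need be deleted; otherwise alternative (i) supplies the count.
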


\subsection{Choice of parameters}\label{SS:parameter}
We first choose and fix some parameters. We choose affine Euclidean coordinates $\omega\mapsto Y(\omega)$ on the affine span of $\cO$, and work on a pre-compact open set $\Omega\subset\RR^{\dim\cO}$ such that $Y(\overline\Omega)\subset\cO$. 

Write $\fh=\Lie(H)$. We fix a nonzero compactly supported, $\alpha$-Ahlfors regular, absolutely decaying measure $\nu$ on the product space $\RR^{\dim\cO}\times\fh$, whose support is contained in $\Omega\times\fh$. Let $A_0>0$ be as in Definition \ref{D:Ahlfors}. Write $\cK:=\supp(\nu)$, and let $\cB\subseteq \RR^{\dim\fh}\cong\fh$ be a $1$-neighborhood of the image of the projection of $\cK\subseteq\Omega\times\fh$ onto $\fh$. Let $C_{\cB},\epsilon_{\cB}>0$ be given by Lemma \ref{L:distortion}.

Let $L_0\geq 1$ and $\mu>0$ be given by Proposition \ref{P:global-estimate}. By shrinking $\mu$ if necessary, we may assume that $0<\mu<L_0$. Then we choose an integer $u>\max\{1,\mu^{-1}\}$, and choose
\begin{equation}\label{E:kappa}
 0<\kappa<\min\left\{
 \frac{\mu u-1}{2u(L_0-\mu)},\frac{u-1}{4}\right\}.
\end{equation}
Let $C_0,\alpha_0>0$ be given by Proposition \ref{P:nondivergence}. Then we choose
\begin{equation}\label{E:eta}
 0<\eta<\min\left\{
 \frac{\kappa}{4d(d-1)(1+\kappa)},
 \frac{u\alpha}{\alpha_0}\right\},
\end{equation}
and 
\begin{equation}\label{E:alphap}
 \alpha'=\alpha-\frac{\eta\alpha_0}{4u}.
\end{equation}

Now fix any $\beta\in(0,1)$ and put $b=\beta^{-1}$. Let $r_0>0$ be smaller than those in Definition \ref{D:Ahlfors} and Proposition \ref{P:nondivergence}. Let $\rho_0>0$ be a sufficiently small initial radius so that 
\begin{equation}\label{E:rho_0}
    6\rho_0\leq \min\{1,r_0\},\quad \rho_0\leq 2\min\{C_{\cB}^{-1},\epsilon_{\cB}\}.
\end{equation}
For $q\in\NN_0$, set $\rho_q:=\rho_0b^{-q}$. Let $(B_q)_{q\in\NN_0}$ be a sequence of ambient balls with centers $(\omega_q,z_q)$ in $\cK$ and radius $\rho_q$. We first observe the following fact: 

\begin{lemma}\label{L:rho_0_0}
    The initial radius $\rho_0$ may be chosen sufficiently small so that the following property holds for all $q,k,\ell\in\NN_0$ with $k+\ell\ge1$. Write $t=q+u(k+\ell)$. For any $\omega\in\Omega$ with $\|\omega-\omega_q\|\leq \rho_q$, and any $1\le r< d$, one has
\begin{equation}\label{E:rho_0_0}
 \left\|\bigwedge\nolimits^r\!\left(
 D_{t,Y(\omega)}^{(k,\ell)}(D_{t,Y(\omega_q)}^{(k,\ell)})^{-1}
 \right)^{\!\pm1}\right\|\le b^{\eta (k+\ell)/2}.
\end{equation}
\end{lemma}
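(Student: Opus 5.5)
The product $D_{t,Y(\omega)}^{(k,\ell)}(D_{t,Y(\omega_q)}^{(k,\ell)})^{-1}$ collapses: since $A_{k,\ell}$ does not depend on $Y$, one gets
$$
D_{t,Y(\omega)}^{(k,\ell)}(D_{t,Y(\omega_q)}^{(k,\ell)})^{-1}=A_{k,\ell}B_{t,Y(\omega)}B_{t,Y(\omega_q)}^{-1}A_{k,\ell}^{-1}.
$$
Because $A$ is abelian, $B_{t,Y(\omega)}B_{t,Y(\omega_q)}^{-1}=\tau(\exp(t(\log b)(Y(\omega)-Y(\omega_q))))$. This operator is diagonal in the fixed orthonormal weight basis, and $A_{k,\ell}$ is also diagonal in that basis (its first and last basis vectors are $\be,\bff$). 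Hence the conjugation by $A_{k,\ell}$ does nothing, and the operator equals $\exp\bigl(t(\log b)\,\mathrm{d}\tau(Y(\omega)-Y(\omega_q))\bigr)$. The same holds for its inverse and for its $r$-th exterior power, which is diagonal in the induced orthonormal basis of $\bigwedge^rV_{\RR}$.

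Next I bound the weights. Let $\Lambda$ be the maximum of $|\chi|$ over all weights $\chi$ of $\bigwedge^r V_{\RR}$ ($1\le r<d$) on the unit ball of the direction space of the affine span of $\cO$, transported through the affine coordinates $\omega\mapsto Y(\omega)$. There are finitely many weights and they are linear, so $\Lambda<\infty$. Then
$$
\left\|\bigwedge\nolimits^r\bigl(\cdots\bigr)^{\pm1}\right\|\le b^{t\Lambda\|\omega-\omega_q\|}\le b^{t\Lambda\rho_0 b^{-q}}.
$$

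It remains to arrange $t\Lambda\rho_0b^{-q}\le \eta(k+\ell)/2$ with $t=q+u(k+\ell)$ and $k+\ell\ge1$. Both $qb^{-q}\le\sup_{q\ge0}qb^{-q}=:c_b<\infty$ and $b^{-q}\le 1$ hold, so
$$
t\Lambda\rho_0b^{-q}\le \Lambda\rho_0\bigl(c_b+u(k+\ell)\bigr)\le \Lambda\rho_0(c_b+u)(k+\ell).
$$
Choosing $\rho_0\le \eta/\bigl(2\Lambda(c_b+u)\bigr)$ then gives the claim.

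This choice of $\rho_0$ depends on $b=\beta^{-1}$, which is allowed because $\rho_0$ is selected after $\beta$ is fixed. It is compatible with \eqref{E:rho_0} since it only shrinks $\rho_0$. The only point needing care is the commutation of $A_{k,\ell}$ with $\tau(A)$, and that follows from the choice of the weight basis. The estimate is uniform in $\omega\in\Omega$; the condition $Y(\overline\Omega)\subset\cO$ is not needed for this lemma.
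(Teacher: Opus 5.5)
Your proof is correct and follows the paper's argument. You collapse the quotient to $\tau(\exp(t(\log b)(Y(\omega)-Y(\omega_q))))$, bound its exterior powers and their inverses by an exponential in $t\rho_q$, and then use $t\rho_q\le\rho_0(\sup_{j\ge0}jb^{-j}+u)(k+\ell)$ to choose $\rho_0$. Two small differences: you explicitly justify that $A_{k,\ell}$ commutes with $\tau(A)$, which the paper uses silently, and your diagonal weight bound is slightly sharper than the paper's crude $\|\bigwedge^r T\|\le\|T\|^{d-1}$. The degenerate case $\Lambda=0$ (e.g.\ $\cO$ a single point) is trivial but needs a word, since your formula for $\rho_0$ divides by $\Lambda$.
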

\begin{proof}
Note that $$
 D_{t,Y(\omega)}^{(k,\ell)}(D_{t,Y(\omega_q)}^{(k,\ell)})^{-1}
 =\tau(\exp(t(\log b)(Y(\omega)-Y(\omega_q)))),
$$
where $\|Y(\omega)-Y(\omega_q)\|\le L_Y\|\omega-\omega_q\|\leq L_Y\rho_q$ for some $L_Y>0$. Then we see that the operator norms of $D_{t,Y(\omega)}^{(k,\ell)}(D_{t,Y(\omega_q)}^{(k,\ell)})^{-1}$ and its inverse are at most $b^{Ct\rho_q}$, where $C>0$ is a constant only depending on $\tau:\bG\to\SL(V)$ and $Y:\overline{\Omega}\to \cO$. It follows that the left-hand side of \eqref{E:rho_0_0} is at most $b^{(d-1)Ct\rho_q}$.
Since
\[
 t\rho_q=(q+u(k+\ell))\rho_0b^{-q}
 \le\rho_0\left(\sup_{j\ge0}jb^{-j}+u\right)(k+\ell),
\]
we only need to choose $\rho_0>0$ sufficiently small so that $(d-1)C\rho_0(\sup_{j\ge0}jb^{-j}+u)\le\eta/2$. This completes the proof.
\end{proof}

Fix $g\in G$. Then we fix $c_0=c_0(g,\cB)$ as in Proposition \ref{P:global-estimate}, $C_1>0$ as in Proposition \ref{P:local-estimate}, and $c_1>0$ as in Lemma \ref{L:Minkowski}. 

Let $N_0\geq 2/\kappa$ be a sufficiently large integer so that for all $N\geq N_0$, \begin{subequations}
 \begin{align}
 c_0(2\rho_0)^{L_0}b^{(\mu u-1)N/2}&\geq 1;\label{E:N01}\\
\rho_0b^{2\kappa N}&\geq C_1;\label{E:N02}\\
b^{\eta N/2}&\geq \sqrt{2}c_1. \label{E:N03}
 \end{align}
 \end{subequations} 

 \subsection{Dangerous sets and their measure control}\label{SS:dangerous} In this subsection, we define the dangerous sets to be avoided for our target sets and give a local control of their measure. This is based on the quantitative non-divergence result (Proposition \ref{P:nondivergence}), as well as the global and local estimates (Propositions \ref{P:global-estimate} and \ref{P:local-estimate}).

Fix $g\in G$. For $q,k,\ell\in\NN_0$ with $k+\ell\ge N_0$, we define the \textsl{raw $(q;k,\ell)$-dangerous set} as
\begin{equation}\label{par:raw}
 \cD_q^{(k,\ell)}
 =\left\{(\omega,z)\in\Omega\times\fh:
\lambda_1\left(D_{q+u(k+\ell),Y(\omega)}^{(k,\ell)}\tau(h(z)g)\widehat{\cL}\right)
                     <b^{-\eta (k+\ell)}\right\}.
\end{equation}
Given Bob's $q$-th game ball $\cK\cap B_q$, we define the \textsl{current $(q;k,\ell)$-dangerous set} as
\begin{equation}\label{par:current}
 \widetilde{\cD}_q^{(k,\ell)}
 =\bigl(\mathcal K\cap B_q\cap\cD_q^{(k,\ell)}\bigr)
 \setminus
 \bigcup_{0\le q'<q}\ \bigcup_{k'+\ell'\ge N_0}
                  \cD_{q'}^{(k',\ell')}.
\end{equation}
The key ingredient in proving Theorem \ref{T:para-winning} is the following historical alternative:

\begin{proposition}\label{P:history}
For $q,k,\ell\in\NN_0$ with $k+\ell\geq N_0$, write $t=q+u(k+\ell)$. Suppose that there exists $1\le r<d$ and some nonzero decomposable $r$-vector $\xi\in\bigwedge^r(\tau(g)\widehat{\cL})$ such that
\begin{equation}\label{par:frozen-obstruction}
 \sup_{(\omega,z)\in2B_q}
 \left\|
 \bigwedge\nolimits^r\left(D_{t,Y(\omega_q)}^{(k,\ell)}\tau(h(z))\right)\xi
 \right\|<1.
\end{equation}
Write $h:=\lfloor\kappa(k+\ell)\rfloor>0$. Then $q>uh$, and
\begin{equation}\label{par:history-inclusion}
 \cK\cap B_q\subseteq
 \cD_{q-uh}^{(k+h,\ell)}\cup\cD_{q-uh}^{(k,\ell+h)}.
\end{equation}
\end{proposition}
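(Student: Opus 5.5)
The proof has two parts. First I show $q>uh$ by playing the obstruction \eqref{par:frozen-obstruction} off against the global estimate (Proposition \ref{P:global-estimate}). Then I obtain \eqref{par:history-inclusion} from the local estimate (Proposition \ref{P:local-estimate}), with the frozen direction $Y(\omega_q)$ replaced by $Y(\omega)$ via Lemma \ref{L:rho_0_0}. Throughout, write $N:=k+\ell\ge N_0\ge 2/\kappa$, so that $h=\lfloor\kappa N\rfloor\ge \kappa N/2>0$ and $t=q+uN$.

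\emph{Step 1: $q>uh$.} The set $2B_q$ contains $\{\omega_q\}\times B(z_q,2\rho_q)$. So the left side of \eqref{par:frozen-obstruction} dominates $\sup_{z\in B(z_q,2\rho_q)}\|\bigwedge^r(D^{(k,\ell)}_{t,Y(\omega_q)}\tau(h(z)))\xi\|$. We have $z_q\in\cB$ and $2\rho_q\le1$ by \eqref{E:rho_0}, so Proposition \ref{P:global-estimate} bounds this from below by
\[
c_0(2\rho_0)^{L_0}b^{-L_0q}\,b^{\mu(q+uN)-N}=c_0(2\rho_0)^{L_0}b^{(\mu u-1)N-(L_0-\mu)q}.
\]
Suppose $q\le uh\le u\kappa N$. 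Then the first constraint in \eqref{E:kappa} gives $(L_0-\mu)q\le (\mu u-1)N/2$, and the bound is at least $c_0(2\rho_0)^{L_0}b^{(\mu u-1)N/2}\ge1$ by \eqref{E:N01}. This contradicts \eqref{par:frozen-obstruction}, so $q>uh$. In particular $q-uh\in\NN_0$, and both index pairs $(k+h,\ell)$ and $(k,\ell+h)$ have total at least $N_0$, so the raw dangerous sets in \eqref{par:history-inclusion} are defined.

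\emph{Step 2: the local estimate at the frozen direction.} Apply Proposition \ref{P:local-estimate} with ball $B=B(z_q,\rho_q)\subseteq\fh$, parameters $k,\ell,t$, $Y=Y(\omega_q)$, and the given $\xi$. Its hypothesis \eqref{E:local-condition} follows from \eqref{par:frozen-obstruction}, since $\{\omega_q\}\times 2B\subseteq 2B_q$. The size condition is
\[
\rho_q b^{t-N-2h}=\rho_0b^{(u-1)N-2h}\ge\rho_0 b^{((u-1)-2\kappa)N}\ge \rho_0b^{2\kappa N}\ge C_1,
\]
using $\kappa<(u-1)/4$ and \eqref{E:N02}. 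Hence for every $(\omega,z)\in\cK\cap B_q$ (so $z\in B$), one of $\lambda_1(D^{(k+h,\ell)}_{t,Y(\omega_q)}\tau(h(z)g)\widehat\cL)$ and $\lambda_1(D^{(k,\ell+h)}_{t,Y(\omega_q)}\tau(h(z)g)\widehat\cL)$ is at most $\sqrt2c_1b^{-h/d(d-1)}$. The key bookkeeping observation is $t=q+uN=(q-uh)+u(N+h)$. So these are exactly the flow times appearing in $\cD^{(k+h,\ell)}_{q-uh}$ and $\cD^{(k,\ell+h)}_{q-uh}$.

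\emph{Step 3: unfreezing $Y$ and comparing exponents.} For $\|\omega-\omega_q\|\le\rho_q$ we have
\[
D^{(k',\ell')}_{t,Y(\omega)}\bigl(D^{(k',\ell')}_{t,Y(\omega_q)}\bigr)^{-1}=\tau\bigl(\exp(t(\log b)(Y(\omega)-Y(\omega_q)))\bigr),
\]
which is independent of $(k',\ell')$. So Lemma \ref{L:rho_0_0}, applied with the original $(q;k,\ell)$ and $r=1$, bounds its norm by $b^{\eta N/2}$. This changes $\lambda_1$ by at most that factor. Combining with \eqref{E:N03}, the relevant $\lambda_1$ at $Y(\omega)$ is at most $b^{\eta N-h/d(d-1)}$. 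It remains to check $\eta N-h/d(d-1)<-\eta(N+h)$. Since $2\eta(N+h)\le2\eta(1+\kappa)N<\kappa N/(2d(d-1))\le h/d(d-1)$ by \eqref{E:eta} and $h\ge\kappa N/2$, this holds. Hence $(\omega,z)$ lies in one of the two raw dangerous sets, proving \eqref{par:history-inclusion}.

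The main obstacle I expect is the exponent bookkeeping: one must match the times $t$ across the shift $q\mapsto q-uh$, and check that the constraints \eqref{E:kappa}, \eqref{E:eta}, \eqref{E:N01}--\eqref{E:N03} exactly absorb both the $\rho_q$-loss in Step 1 and the $Y$-perturbation loss in Step 3.
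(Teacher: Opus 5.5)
Your proof is correct and follows the paper's argument step for step. The contradiction with Proposition \ref{P:global-estimate}, via \eqref{E:kappa} and \eqref{E:N01}, gives $q>uh$. Then Proposition \ref{P:local-estimate} at the frozen direction $Y(\omega_q)$, the time identity $t=(q-uh)+u(k+\ell+h)$, Lemma \ref{L:rho_0_0} to unfreeze $Y$, and the exponent comparison from \eqref{E:eta} and \eqref{E:N03} give \eqref{par:history-inclusion}. Your remark that $D^{(k',\ell')}_{t,Y(\omega)}\bigl(D^{(k',\ell')}_{t,Y(\omega_q)}\bigr)^{-1}$ does not depend on $(k',\ell')$ makes explicit why Lemma \ref{L:rho_0_0} may be applied with the original indices $(q;k,\ell)$, which the paper uses tacitly.
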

\begin{proof}
Suppose to the contrary that $q\leq uh$. Since $z_q\in\cB$ and $2\rho_q\leq 2\rho_0\leq 1$ by \eqref{E:rho_0}, Proposition \ref{P:global-estimate} gives \begin{equation}\label{E:history-cal}
    \begin{aligned}
        \sup_{(\omega,z)\in2B_q}
 \left\|
\bigwedge\nolimits^r\left(D_{t,Y(\omega_q)}^{(k,\ell)}\tau(h(z))\right)\xi
 \right\|
 &\geq \sup_{z\in B(z_q,2\rho_q)}
 \left\|
\bigwedge\nolimits^r\left(D_{t,Y(\omega_q)}^{(k,\ell)}\tau(h(z))\right)\xi
 \right\|\\
 &\geq c_0(2\rho_q)^{L_0}b^{\mu t-(k+\ell)}\\
 &=c_0(2\rho_0)^{L_0}b^{(\mu-L_0)q+(\mu u-1)(k+\ell)}.
    \end{aligned}
\end{equation} 
Since $\mu<L_0$ and $q\leq u\kappa(k+\ell)$, we see that $$
(\mu-L_0)q+(\mu u-1)(k+\ell)\overset{}{\geq }((\mu-L_0)u\kappa+(\mu u-1))(k+\ell)\overset{\eqref{E:kappa}}{\geq} (\mu u-1)(k+\ell)/2,
$$
and hence the right-hand side of \eqref{E:history-cal} is at least $c_0(2\rho_0)^{L_0}b^{(\mu u-1)(k+\ell)/2}\geq 1$ by \eqref{E:N01}. This contradicts \eqref{par:frozen-obstruction}. Therefore, we have $q>uh$.

Next we verify \eqref{par:history-inclusion}. Let $(\omega,z)\in B_q$. Since \eqref{par:frozen-obstruction} holds and $$
\rho_qb^{t-(k+\ell)-2h}\geq\rho_0b^{(u-1-2\kappa)(k+\ell)}\overset{\eqref{E:kappa}}{>}\rho_0b^{2\kappa(k+\ell)}\overset{\eqref{E:N02}}{\geq }C_1,
$$
Proposition \ref{P:local-estimate} gives \begin{equation*}
    \min\left\{       \lambda_1\left(D^{(k+h,\ell)}_{t,Y(\omega_q)}\tau(h(z)g)\widehat{\cL}\right),\;\lambda_1\left(D^{(k,\ell+h)}_{t,Y(\omega_q)}\tau(h(z)g)\widehat{\cL}\right)
        \right\}\leq \sqrt{2}c_1b^{-h/d(d-1)}.
\end{equation*}
Moreover, by Lemma \ref{L:rho_0_0}, we see that 
\begin{equation}\label{E:min-cal}
    \min\left\{       \lambda_1\left(D^{(k+h,\ell)}_{t,Y(\omega)}\tau(h(z)g)\widehat{\cL}\right),\;\lambda_1\left(D^{(k,\ell+h)}_{t,Y(\omega)}\tau(h(z)g)\widehat{\cL}\right)
        \right\}\leq \sqrt{2}c_1b^{-h/d(d-1)+\eta(k+\ell)/2}.
\end{equation}
Since $\frac{1}{2}\kappa(k+\ell)\leq h\leq \kappa(k+\ell)$, we check that $$
\frac{h}{d(d-1)}\geq \frac{\kappa(k+\ell)}{2d(d-1)}\overset{\eqref{E:eta}}{>}2\eta(1+\kappa)(k+\ell)\geq 2\eta(k+\ell+h),
$$
and hence the right-hand side of \eqref{E:min-cal} is at most $$
\sqrt{2}c_1b^{-2\eta(k+\ell+h)+\eta(k+\ell)/2}\overset{\eqref{E:N03}}{\leq} b^{-\eta(k+\ell+h)}.
$$
This means that $(\omega,z)\in\cD_{q-uh}^{(k+h,\ell)}\cup\cD_{q-uh}^{(k,\ell+h)}.$ The proof is complete.
\end{proof}

Then we easily deduce a quantitative measure control of dangerous sets from a combination of Propositions \ref{P:nondivergence} and \ref{P:history}.

\begin{corollary}\label{C:measure-control}
    For $q,k,\ell\in\NN_0$ with $k+\ell\geq N_0$, write $t=q+u(k+\ell)$. If $\widetilde{\cD}_q^{(k,\ell)}\neq \varnothing$, then for any $\epsilon>0$, one has
\begin{equation}\label{E:measure-bound}
 \nu\left\{(\omega,z)\in2B_q:
 \lambda_1\left(D_{t,Y(\omega_q)}^{(k,\ell)}\tau(h(z)g)\widehat{\cL}\right)<\epsilon\right\}
 \le C_0\epsilon^{\alpha_0}\nu(6B_q).
\end{equation}
\end{corollary}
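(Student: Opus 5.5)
We apply Proposition \ref{P:nondivergence} to the ball $2B_q$ with $\rho=1$, and use Proposition \ref{P:history} to rule out alternative (ii). Concretely, take the real analytic map $h_0(\omega,z):=M_{\widehat{\cL}}\,\tau(h(z)g)$, where $M_{\widehat{\cL}}\in\SL_d(\RR)$ is a fixed matrix sending $\ZZ^d$ (in the chosen orthonormal weight basis of $V_{\RR}$) onto $\widehat{\cL}$. After conjugating by $M_{\widehat{\cL}}$, we may regard $\tau(h(z)g)\widehat{\cL}$ as $h_0(\omega,z)\ZZ^d$. This map is constant in $\omega$ and analytic in $z$. Take also $D:=D^{(k,\ell)}_{t,Y(\omega_q)}$, which is diagonal in the weight basis, since $A_{k,\ell}$ and $B_{t,Y}$ are both diagonal there. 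The radius of $2B_q$ is $2\rho_q\le 2\rho_0\le r_0$ by \eqref{E:rho_0}, and its center $(\omega_q,z_q)$ lies in $\cK=\supp(\nu)$. So Proposition \ref{P:nondivergence} applies, and one of its two alternatives holds. Alternative (i) with $\rho=1$ reads
\[
\nu\{(\omega,z)\in 2B_q:\lambda_1(D h_0(\omega,z)\ZZ^d)<\epsilon\}\le C_0\epsilon^{\alpha_0}\nu(3\cdot 2B_q),
\]
and $3\cdot 2B_q=6B_q$, which is exactly \eqref{E:measure-bound}.

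It therefore suffices to exclude alternative (ii). Suppose there were $1\le r<d$ and a nonzero decomposable $\xi_0\in\bigwedge^r\ZZ^d$ with $\sup_{2B_q}\|\bigwedge^r(Dh_0)\xi_0\|<1$. Transporting back, $\xi:=\bigwedge^r(\tau(g)M_{\widehat{\cL}})\xi_0$ is a nonzero decomposable vector in $\bigwedge^r(\tau(g)\widehat{\cL})$. Then
\[
\bigwedge\nolimits^r(D\,\tau(h(z)))\xi=\bigwedge\nolimits^r(D h_0(\omega,z))\xi_0 ,
\]
so \eqref{par:frozen-obstruction} holds. Proposition \ref{P:history} then gives $q>uh$ and $\cK\cap B_q\subseteq\cD^{(k+h,\ell)}_{q-uh}\cup\cD^{(k,\ell+h)}_{q-uh}$.

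The two indices $(k+h,\ell)$ and $(k,\ell+h)$ both have sum $k+\ell+h\ge N_0$, and $q'=q-uh$ satisfies $0\le q'<q$. By \eqref{par:current}, every point of $\widetilde{\cD}^{(k,\ell)}_q\subseteq\cK\cap B_q$ must avoid $\cD^{(k',\ell')}_{q'}$ for all such $q'<q$ and $k'+\ell'\ge N_0$. Hence $\widetilde{\cD}^{(k,\ell)}_q=\varnothing$, contradicting the hypothesis. This rules out (ii) and proves the corollary.

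The main technical care is the bookkeeping of the transfer between $\widehat{\cL}$ and $\ZZ^d$. One must check that a decomposable integral vector corresponds to a decomposable lattice vector, that the $\omega$-variable is harmless because $D$ is frozen at $\omega_q$ and $h_0$ does not depend on $\omega$, and that Proposition \ref{P:nondivergence} really applies on the product space with the measure $\nu$.
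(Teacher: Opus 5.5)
Your argument matches the paper's proof: apply Proposition~\ref{P:nondivergence} on $2B_q$ with $D=D^{(k,\ell)}_{t,Y(\omega_q)}$ and $\rho=1$, and exclude alternative (ii) because Proposition~\ref{P:history} would put $\cK\cap B_q$ inside the earlier raw dangerous sets $\cD^{(k+h,\ell)}_{q-uh}\cup\cD^{(k,\ell+h)}_{q-uh}$, forcing $\widetilde{\cD}^{(k,\ell)}_q=\varnothing$. One slip to fix: the map must be $h_0(\omega,z)=\tau(h(z)g)M_{\widehat{\cL}}$, not $M_{\widehat{\cL}}\tau(h(z)g)$, and no conjugation is involved (conjugating would destroy the diagonality of $D$); this order is the one your transport formula $\xi=\bigwedge^r(\tau(g)M_{\widehat{\cL}})\xi_0$ already uses, and taking an arbitrary $M_{\widehat{\cL}}\in\SL_d(\RR)$ is the right normalization, slightly more careful than the paper's $\widehat{\cL}=\tau(g_0)V_{\ZZ}$ with $g_0\in G$.
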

\begin{proof}
    Write $\widehat{\cL}=\tau(g_0)V_{\ZZ}$ for some $g_0\in G$. Since the map $(\omega,z)\mapsto \tau(h(z)gg_0)$ is real analytic, we apply Proposition \ref{P:nondivergence} to $B=2B_q$ with $2\rho_q\leq r_0$ by \eqref{E:rho_0}, $D=D_{t,Y(\omega_q)}^{(k,\ell)}$, and $\rho=1$. Suppose that the possibility (ii) in Proposition \ref{P:nondivergence} holds, that is, \eqref{par:frozen-obstruction} holds for some $1\le r<d$ and some nonzero decomposable $r$-vector $\xi=\bigwedge^r\tau(gg_0)(\xi_0)$. Then by Proposition \ref{P:history}, one has $$
    \cK\cap B_q\subseteq
 \bigcup_{0<q'<q}\cD_{q'}^{(k+h,\ell)}\cup\cD_{q'}^{(k,\ell+h)},
    $$
    which contradicts $\widetilde{\cD}_q^{(k,\ell)}\neq \varnothing$. Therefore, the possibility (i) in Proposition \ref{P:nondivergence} holds, that is, \eqref{E:measure-bound} holds. The proof is complete.
\end{proof}

\subsection{Cantor potential and Alice's strategy} In this subsection, we verify that the measure control of dangerous sets in Section \ref{SS:dangerous} provides a legal collection of balls for Alice to delete. The number of covering balls is counted as follows:

\begin{proposition}\label{P:covering}
There exists a constant $C_2>0$ with the following property. For any $q,k,\ell\in\NN_0$ with $k+\ell\geq N_0$, the set $\widetilde{\cD}_q^{(k,\ell)}$ admits a cover $\cA^{(k,\ell)}_q$ consisting of balls with centers in $\mathcal K$ and radius $\rho_{q+u(k+\ell)}$, whose cardinality is at most $C_2b^{(u\alpha-\eta\alpha_0/2)(k+\ell)}$.
\end{proposition}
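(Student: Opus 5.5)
If $\widetilde{\cD}_q^{(k,\ell)}=\varnothing$, take $\cA^{(k,\ell)}_q=\varnothing$. Otherwise, write $t=q+u(k+\ell)$ and $r:=\rho_{q+u(k+\ell)}=\rho_0b^{-t}$. The plan is to cover $S:=\widetilde{\cD}_q^{(k,\ell)}$ by Lemma \ref{L:BNY-cover}, applied at radius $r/2$. This gives balls of radius $r$ centered in $S\subseteq\cK$, with cardinality at most $A_0\,\nu(S^{(r/2)})\,(r/2)^{-\alpha}$. Everything therefore reduces to showing
\[
\nu(S^{(r/2)})\ll \nu(6B_q)\,b^{-\eta\alpha_0(k+\ell)/2}
\]
with a constant independent of $q,k,\ell$. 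Once this holds, Ahlfors regularity gives $\nu(6B_q)\le A_0(6\rho_q)^\alpha$, which is legitimate since $6\rho_q\le r_0$ by \eqref{E:rho_0}. The count is then
\[
\ll \rho_q^\alpha r^{-\alpha}b^{-\eta\alpha_0(k+\ell)/2}=b^{u\alpha(k+\ell)-\eta\alpha_0(k+\ell)/2}.
\]

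To bound $\nu(S^{(r/2)})$, I would show that every point $(\omega',z')$ of $S^{(r/2)}$ in the support of $\nu$ lies in $2B_q$ and satisfies
\[
\lambda_1\Bigl(D_{t,Y(\omega_q)}^{(k,\ell)}\tau(h(z')g)\widehat{\cL}\Bigr)<\epsilon:=C\,b^{-\eta(k+\ell)/2}
\]
for a fixed constant $C$. Corollary \ref{C:measure-control} then applies, since $S\neq\varnothing$, and gives $\nu(S^{(r/2)})\le C_0C^{\alpha_0}b^{-\eta\alpha_0(k+\ell)/2}\nu(6B_q)$, as required. Containment in $2B_q$ is immediate, because $r/2\le\rho_q$.

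For the $\lambda_1$ bound, pick $(\omega,z)\in S$ within distance $r/2$ of $(\omega',z')$. Since $(\omega,z)\in\cD_q^{(k,\ell)}$, there is a nonzero $v\in D_{t,Y(\omega)}^{(k,\ell)}\tau(h(z)g)\widehat{\cL}$ with $\|v\|<b^{-\eta(k+\ell)}$. Three steps then transfer this short vector.

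\emph{First}, replace $Y(\omega)$ by $Y(\omega_q)$. Lemma \ref{L:rho_0_0} with $r=1$ shows this costs a factor at most $b^{\eta(k+\ell)/2}$. The lemma needs $\|\omega-\omega_q\|\le\rho_q$, which holds since $(\omega,z)\in B_q$.

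\emph{Second}, move from $z$ to $z'$. Write
\[
D\tau(h(z'))=\Bigl(D\tau(h(z'))\tau(h(z))^{-1}D^{-1}\Bigr)\,D\tau(h(z)),
\]
and apply Lemma \ref{L:distortion}. Both $z$ and $z'$ lie in $\cB$, because $\cB$ is the $1$-neighbourhood of the projection of $\cK$ and $\|z'-z\|\le r/2\le 1$. Moreover $b^t\|z'-z\|\le\rho_0/2\le\epsilon_{\cB}$ by \eqref{E:rho_0}. The lemma therefore bounds the conjugated difference by $C_{\cB}\rho_0/2\le1$, again by \eqref{E:rho_0}. So the operator norm of the transfer is at most $2$.

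\emph{Combining}, $\lambda_1$ at $(\omega_q,z')$ is at most $2\,b^{-\eta(k+\ell)}b^{\eta(k+\ell)/2}=2b^{-\eta(k+\ell)/2}$. This gives the claim with $C=2$ (use $\le$ and enlarge $C$ slightly to get strict inequality).

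The main obstacle I expect is bookkeeping rather than substance. One must check that Lemma \ref{L:rho_0_0} is used in the right direction: we need the norm of the inverse transition applied to vectors, which is its $r=1$ case with the exponent $-1$. One must also check that the Ahlfors constants and $r<r_0$ hold uniformly in $q$. The resulting constant is then $C_2=A_0^2\,6^\alpha 2^{\alpha}\rho_0^{-\alpha}\cdot\rho_0^{\alpha}\,C_0 2^{\alpha_0}$, which is independent of $q,k,\ell$.
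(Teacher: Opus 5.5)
Your proposal is correct and follows the paper's proof essentially step by step. You show that the closed $\rho_{q+u(k+\ell)}/2$-neighborhood of $\widetilde{\cD}_q^{(k,\ell)}$ lies in $2B_q$ with $\lambda_1$ at $Y(\omega_q)$ below $2b^{-\eta(k+\ell)/2}$, using Lemma~\ref{L:rho_0_0} and then Lemma~\ref{L:distortion}, and you then combine Corollary~\ref{C:measure-control}, Ahlfors regularity and Lemma~\ref{L:BNY-cover} to reach the same constant $C_2=A_0^2C_02^{\alpha_0}12^{\alpha}$. One small simplification: enlarging $C$ is unnecessary, because the original bound $\lambda_1<b^{-\eta(k+\ell)}$ is already strict and the strict inequality carries through both transfer steps.
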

\begin{proof}
If $\widetilde{\cD}_q^{(k,\ell)}=\varnothing$, the conclusion is clear. Now we assume that $\widetilde{\cD}_q^{(k,\ell)}\neq \varnothing$, and write $t=q+u(k+\ell)$. We claim that: the closed $\rho_t/2$-neighborhood of $\widetilde{\cD}_q^{(k,\ell)}$ is contained in the set $$
 \left\{(\omega',z')\in2B_q:
 \lambda_1\left(D_{t,Y(\omega_q)}^{(k,\ell)}\tau(h(z')g)\widehat{\cL}\right)
 <2b^{-\eta (k+\ell)/2}\right\}.
$$
In fact, for any $(\omega,z)\in \widetilde{\cD}_q^{(k,\ell)}$, one has $\|\omega-\omega_q\|\leq \rho_q$, and $$
\lambda_1\left(D_{t,Y(\omega)}^{(k,\ell)}\tau(h(z)g)\widehat{\cL}\right)
<b^{-\eta (k+\ell)}.
$$
Then by Lemma \ref{L:rho_0_0}, we have $$
\lambda_1\left(D_{t,Y(\omega_q)}^{(k,\ell)}\tau(h(z)g)\widehat{\cL}\right)
<b^{-\eta (k+\ell)/2}.
$$
Let $(\omega',z')$ be in the $\rho_t/2$-neighborhood of $(\omega,z)$. Then $b^t\|z-z'\|\leq \rho_0/2\leq \epsilon_{\cB}$ by \eqref{E:rho_0}. It follows from Lemma \ref{L:distortion} that \begin{equation*}
    \begin{aligned}     \lambda_1\left(D_{t,Y(\omega_q)}^{(k,\ell)}\tau(h(z')g)\widehat{\cL}\right)
&\leq \left(1+C_{\cB}b^t\|z-z'\|\right)\cdot \lambda_1\left(D_{t,Y(\omega_q)}^{(k,\ell)}\tau(h(z)g)\widehat{\cL}\right)\\
&<(1+C_{\cB}\rho_0/2) b^{-\eta (k+\ell)/2}.
    \end{aligned}
\end{equation*}
Since \eqref{E:rho_0} gives $1+C_{\cB}\rho_0/2\leq 2$, this verifies our claim.

In view of our claim, Corollary \ref{C:measure-control} and Definition \ref{D:Ahlfors} together imply that the $\nu$-measure of the closed $\rho_t/2$-neighborhood of $\widetilde{\cD}_q^{(k,\ell)}$ is at most $$
C_0(2b^{-\eta (k+\ell)/2})^{\alpha_0}\nu(6B_q)\leq 
C_0(2b^{-\eta (k+\ell)/2})^{\alpha_0}\cdot A_0(6\rho_0b^{-q})^{\alpha}.
$$
Then Lemma \ref{L:BNY-cover} gives a cover of $\widetilde{\cD}_q^{(k,\ell)}$ by balls with centers in $\cK$ and radius $\rho_t$, whose cardinality is at most $$
A_0\cdot \nu\left(\left(\widetilde{\cD}_q^{(k,\ell)}\right)^{(\rho_t/2)}\right)\cdot (\rho_t/2)^{-\alpha}
\leq C_2b^{(u\alpha-\eta\alpha_0/2)(k+\ell)},
$$
where $C_2:=A_0^2C_02^{\alpha_0}12^{\alpha}$. This completes the proof.
\end{proof}

Now we enlarge $N_0$ in Section \ref{SS:parameter}  so that for any $N\ge N_0$,
\begin{equation}\label{E:N04}
 (N+1)C_2\le b^{\eta\alpha_0N/4}.
\end{equation}
We are ready to define Alice’s strategy of deleting collections of balls. For $q\in\NN_0$, let Alice choose her collections $\cA_{q+1,i}\,(i\geq 0)$ as
\begin{equation}\label{par:schedule}
 \cA_{q+1,i}=
 \begin{cases}
 \bigcup_{(k,\ell):k+\ell=N}\cA_q^{(k,\ell)},
     & i=uN-1\text{ for some }N\ge N_0,\\
 \varnothing,&\text{otherwise}.
 \end{cases}
\end{equation}
The announced balls are balls in $\mathcal K$, obtained by intersecting
the ambient balls with $\mathcal K$. They have the required radius
$\rho_{q+1+i}$. Their cardinality is at most
$$
 (N+1)C_2b^{(u\alpha-\eta\alpha_0/2)N}
 \overset{\eqref{E:N04}}{\leq} b^{(u\alpha-\eta\alpha_0/4)N}
 \overset{\eqref{E:alphap}}{=}\beta^{-\alpha'(i+1)}.
$$
Thus Alice's strategy is legal.

\subsection{Proof of Theorem \ref{T:para-winning}} Finally, we prove that Alice’s strategy above is a winning strategy for the target set \eqref{E:para-winning} when restricted to any local chart $Y(\Omega)\times H$, and hence complete the proof of Theorem \ref{T:para-winning} (see the notion of HAW on manifolds in Section \ref{SS:Schmidt}).

Let $\nu$ be a nonzero, compactly supported, $\alpha$-Ahlfors regular, absolutely decaying measure on $\Omega\times\fh$. Write $\cK=\supp(\nu)$. We choose $\alpha'\in (0,\alpha)$ as in \eqref{E:alphap} and let the $\alpha'$-Cantor potential game on $\cK$ go as in previous subsections. Write $x=g\Gamma$ for some $g\in G$.
    
    We first show that: if Alice doesn't win by default, then the outcome of the game avoids all raw dangerous sets $\cD_{q}^{(k,\ell)}$ for $q,k,\ell\in\NN_0$ with $k+\ell\geq N_0$. In fact, let $(\omega_*,z_*)\in \cK\cap\bigcap_{q=0}^{+\infty}B_q$. Suppose that $(\omega_*,z_*)\in \cD^{(k,\ell)}_q$ for some $q,k,\ell$. Then we may choose $q$ minimal so that $$
    (\omega_*,z_*)\in \cD_q^{(k,\ell)}\setminus \bigcup_{q'<q}\bigcup_{k'+\ell'\geq N_0}\cD_{q'}^{(k',\ell')}.
    $$ 
    This implies that $(\omega_*,z_*)\in \widetilde{\cD}^{(k,\ell)}_q$. So $(\omega_*,z_*)$ is contained in some ball in the collection $\cA^{(k,\ell)}_q$, which doesn't meet $\cK\cap B_{q+u(k+\ell)}$. This is a contradiction.

    We next show that: if some point $(\omega_*,z_*)$ avoids all raw dangerous sets $\cD_{q}^{(k,\ell)}$ for $q,k,\ell\in\NN_0$ with $k+\ell\geq N_0$, then $(\omega_*,z_*)$ lies in the following set \begin{equation}\label{E:target-Lie}
        \left\{(\omega,z)\in \Omega\times \fh: 
    \{\exp(tY(\omega))h(z)x:t\geq 0\}\text{ is bounded in }G/\Gamma
    \right\}.
    \end{equation} In fact, for all $q,k,\ell$, one has $$   \lambda_1\left(D^{(k,\ell)}_{q+u(k+\ell),Y(\omega_*)}\tau(h(z_*)g)\widehat{\cL}\right)\geq b^{-\eta(k+\ell)}.
    $$
    Taking $(k,\ell)=(N_0,0)$ with $$A_{N_0,0}\cdot B_{q+uN_0,Y(\omega_*)}=D^{(N_0,0)}_{q+uN_0,Y(\omega_*)}$$
    gives for all $q\in\NN_0$, \begin{equation*}
        \begin{aligned}
       \lambda_1\left(B_{q+uN_0,Y(\omega_*)}\tau(h(z_*)g)\widehat{\cL}\right)
    &\geq \|A_{N_0,0}\|^{-1}\cdot\lambda_1\left(D^{(N_0,0)}_{q+uN_0,Y(\omega_*)}\tau(h(z_*)g)\widehat{\cL}\right)\\
    &\geq b^{-(\eta+1/d)N_0}.
    \end{aligned}
    \end{equation*}
    Moreover, for $0\leq q<uN_0$,  one has \begin{equation*}
        \begin{aligned}
            \lambda_1\left(B_{q,Y(\omega_*)}\tau(h(z_*)g)\widehat{\cL}\right)&\geq \|B_{uN_0,Y(\omega_*)}\|^{-1}\cdot \lambda_1\left(B_{q+uN_0,Y(\omega_*)}\tau(h(z_*)g)\widehat{\cL}\right)\\
            &\gg_{\tau} b^{-(r_{*}u+\eta+1/d)N_0},
        \end{aligned}
    \end{equation*}
    where $r_*>0$ is a constant depending on $Y(\omega_*)\in Y(\overline{\Omega})$. So we conclude that $$
    \inf_{q\in\NN_0}\lambda_1\left(B_{q,Y(\omega_*)}\tau(h(z_*)g)\widehat{\cL}\right)>0.
    $$
    Then it follows from Lemma \ref{L:Mahler} that $\{\exp(tY(\omega_*))h(z_*)x:t\geq 0\}$ is bounded in $G/\Gamma$.

    Therefore, we have shown that the intersection of $\cK$ with the set \eqref{E:target-Lie} is $\alpha'$-Cantor winning. Then Theorem \ref{T:BNY21} (ii) shows that this intersection is nonempty. By Theorem \ref{T:BNY21} (i), the set \eqref{E:target-Lie} is HAW on $\Omega\times\fh$. Finally, it follows from Lemma \ref{L:HAW-mnfd} (iii) that the target set \eqref{E:para-winning} is HAW on $Y(\Omega)\times H$. This completes the proof.

\section{Proofs of Main Theorems}\label{S:reductions}

In this section, we deduce all our main results from Theorem \ref{T:para-winning}, using the standard reductions as in \cite[Section 5]{KM96} and \cite[Sections 3--4]{GW26}.

\subsection{Proofs in the arithmetic setting} Let $\bG$ be a connected $\QQ$-simple algebraic group such that $G=\bG(\RR)^\circ$ has no compact factors, and let $\Gamma<G$ be an arithmetic lattice. Fix a maximal $\RR$-split torus $A$ whose weight spaces are defined over $\overline{\QQ}\cap\RR$, and choose an open positive Weyl chamber $\fa^+$.

We present two statements here which are direct consequences of Theorem \ref{T:para-winning}.

\begin{theorem}\label{T:alg-H}
    For $G,\Gamma$ as above, let $F^+$ be a one-parameter $\Ad$-$\RR$-diagonalizable subsemigroup of $G$, and let $H=H(F^+)$. Then for any $x\in X=G/\Gamma$, the set $$
    E_{H,x}(F^+)=\{h\in H:F^+hx\text{ is bounded in }X\}
    $$
    is HAW on $H$.
\end{theorem}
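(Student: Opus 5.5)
Write $F^+=\{g_t:t\ge0\}$. The plan is to deduce the statement from Theorem~\ref{T:para-winning} by choosing a parameter space $\cO$ containing the direction of $F^+$ and then slicing the product-winning set. The steps are: reduce to $Y\in\overline{\fa^+}$, build $\cO$, slice at a fixed $Y$, and dispose of trivial cases.

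\textbf{Reduction to $Y\in\overline{\fa^+}$.} Since $F^+$ is $\Ad$-$\RR$-diagonalizable, $g_t=\exp(tY_0)$ with $\ad Y_0$ real diagonalizable. Then $Y_0$ is conjugate into $\Lie(A)$, and after a Weyl group element into $\overline{\fa^+}$: there is $g_0\in G$ with $Y:=\Ad(g_0)Y_0\in\overline{\fa^+}$.
\begin{itemize}
\item If $Y=0$, or more generally if $F^+$ acts trivially, then $H$ is trivial or $E_{H,x}(F^+)=H$, and there is nothing to prove.
\item Conjugation gives $H(F^+)=g_0^{-1}H(\exp(\RR_{\ge0}Y))g_0$. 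Also $F^+hx$ is bounded iff $\exp(\RR_{\ge0}Y)(g_0hg_0^{-1})(g_0x)$ is bounded, since $g_0$ acts on $X$ by a homeomorphism.
\item The map $h\mapsto g_0hg_0^{-1}$ is a diffeomorphism. By Lemma~\ref{L:HAW-mnfd}(iii) we may therefore assume $Y\in\overline{\fa^+}$.
\end{itemize}
Finally, rescaling time does not change boundedness or $H$, so we may normalize $1=\max_i\theta_i(Y)=\theta(Y)$ as in Section~\ref{S:global-local-estimates}.

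\textbf{Construction of $\cO$.} We need a parameter space $\cO\ni Y$ on which the following signs are constant:
\begin{itemize}
\item every restricted root;
\item every weight of $\bigwedge^rV_\RR$, for $1\le r<d$.
\end{itemize}
The natural choice is to let $\cO$ be the single point $\{Y\}$, a $0$-dimensional affine subspace (relatively open in itself). All the requirements at the start of Section~\ref{S:global-local-estimates} then hold trivially:
\begin{itemize}
\item $\cO\subseteq\{Y'\in\overline{\fa^+}:1=\theta(Y')\ge\max_i\theta_i(Y')\}$;
\item constancy of signs is automatic;
\item the horospherical subgroup of $\cO$ is exactly $H$.
\end{itemize}
The compact set $\cC$ is then $\{Y\}$.

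\textbf{Slicing.} Theorem~\ref{T:para-winning} shows that $\{(Y,h)\}\times\{\text{bounded}\}\cong E_{H,x}(F^+)$ is HAW on $\{Y\}\times H\cong H$, which is the claim.

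\textbf{Main obstacle: degenerate $\cO$.} I need to check that the proof of Theorem~\ref{T:para-winning} tolerates $\dim\cO=0$, i.e.\ that the coordinate domain $\Omega\subset\RR^0$ and measures on $\RR^0\times\fh\cong\fh$ cause no problem. Nothing in Sections~\ref{S:global-local-estimates}--\ref{S:para-winning} uses $\dim\cO>0$; Lemma~\ref{L:rho_0_0} becomes trivial. So this should be fine.

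If one prefers to keep $\dim\cO>0$, there is a fallback. Take $\cO$ to be a small relatively open neighborhood of $Y$ inside the face of the Weyl chamber that contains $Y$, intersected with the hyperplane $\theta=1$ and shrunk so that the nonzero signs stay constant. Then argue as follows:
\begin{itemize}
\item By Theorem~\ref{T:para-winning} and Lemma~\ref{L:HAW-mnfd}(ii), for each $h$-independent parameter the set is HAW on the product.
\item Recovering a slice from a product is not given directly by Lemma~\ref{L:HAW-mnfd}.
\item Hence one would instead rerun the Cantor-game argument with $\nu$ a measure on $\fh$ alone. This is exactly the $\dim\cO=0$ case again.
\end{itemize}
So I would present the point-parameter version, noting explicitly that every estimate used (Propositions~\ref{P:global-estimate}, \ref{P:local-estimate}, \ref{P:nondivergence}, Lemma~\ref{L:distortion}) holds verbatim with $\cC=\{Y\}$.
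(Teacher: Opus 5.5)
Your proposal is correct and matches the paper's proof: conjugate $F^+$ by some $g_0\in G$ into $\{\exp(tY):t\ge0\}$ with $Y\in\overline{\fa^+}$ nonzero, transport the set via Lemma~\ref{L:HAW-mnfd}(iii), normalize so that $\theta(Y)=\max_i\theta_i(Y)=1$, and apply Theorem~\ref{T:para-winning} with the point parameter space $\cO=\{Y\}$. Your check that the argument of Section~\ref{S:para-winning} works when $\dim\cO=0$ is the one the paper relies on implicitly, so the fallback with $\dim\cO>0$ is not needed.
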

\begin{proof} If $F^+$ is trivial, then the conclusion is clear. If $F^+$ is nontrivial, since it is $\Ad$-$\RR$-diagonalizable, we may choose some $g_0\in G$ such that $$
g_0F^+g_0^{-1}=\{\exp(tY):t\geq 0\}\quad \text{ for some nonzero }Y\in \overline{\fa^+}.
$$
Note that for any $x\in G/\Gamma$, we have $E_{H,x}(F^+)=g_0^{-1}E_{g_0Hg_0^{-1},g_0x}(g_0F^+g_0^{-1})g_0$. So in view of Lemma \ref{L:HAW-mnfd} (iii), showing that $E_{H,x}(F^+)$ is HAW on $H$ is equivalent to showing that \begin{equation}
    E_{g_0Hg_0^{-1},g_0x}(\{\exp(tY):t\geq 0\}) \text{ is HAW on }g_0Hg_0^{-1}.
\end{equation}

Let $\theta_i$ and $\theta$ be as in Proposition \ref{P:rank-one-power}, and renormalize $Y$ so that $1=\theta(Y)=\max_i\theta_i(Y).$ Take $\cO=\{Y\}$. Then the conclusion follows from Theorem \ref{T:para-winning}.
\end{proof}

\begin{theorem}\label{T:alg-prod}
    For $G,\Gamma$ as above, let $\cS$ denote the set of unit directions in $\fa^+$, and let $H$ be their common expanding horospherical subgroup of $G$. Then for any $x\in X=G/\Gamma$, the set
\begin{equation}\label{E:product-SH}
 \Pi_x:=\{(Y,h)\in \cS\times H:\{\exp(tY)hx:t\ge0\}\text{ is bounded in }X\}
\end{equation}
is HAW on $\cS\times H$.
\end{theorem}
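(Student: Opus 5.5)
The plan is to cover $\cS$ by finitely many (or countably many) pieces, each of which after rescaling fits the hypotheses on the parameter space $\cO$ in Theorem \ref{T:para-winning}. Then I will glue the local HAW statements using the fact that HAW on a manifold is a local property, defined chart by chart.

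First, $\cS$ is the set of unit vectors in the open cone $\fa^+$, which is a smooth manifold. All elements of $\fa^+$ share the same expanding horospherical subgroup $H$, since the sign of each restricted root is constant on the open chamber. The sign condition on the weights of $\bigwedge^rV_{\RR}$ is not automatic on all of $\fa^+$. The weights form a finite set of linear functionals, so the set where some nonzero weight vanishes is a finite union of hyperplanes $Z\subseteq\fa$.

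Consider the normalization map
\[
\Psi:\fa^+\to\fa^+,\qquad Y\mapsto Y/\max_i\theta_i(Y),
\]
and choose $\theta$ locally. On each open region where a fixed factor attains the maximum, we have $\max_i\theta_i=\theta$. The image of the radial projection from $\cS$ is then $\{Y'\in\fa^+:1=\theta(Y')\ge\max_i\theta_i(Y')\}$, a relatively open subset of the affine hyperplane $\{\theta=1\}$.

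The sets where the maximizer switches, together with $Z\cap\cS$, form a finite closed union of lower-dimensional submanifolds $\cS'$ of $\cS$ (these are intersections of $\cS$ with hyperplanes). By Lemma \ref{L:HAW-mnfd}(iv) it suffices to prove that $\Pi_x\cap((\cS\setminus\cS')\times H)$ is HAW on $(\cS\setminus\cS')\times H$. Here $\cS'\times H$ is again a finite closed union of lower-dimensional submanifolds of $\cS\times H$.

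Each connected component of $\cS\setminus\cS'$ maps diffeomorphically, via $Y\mapsto Y/\theta(Y)$ with a fixed maximizing factor $\theta$, onto a relatively open set $\cO$ of an affine subspace of $\fa$. On $\cO$:
\begin{itemize}
\item $\theta$ attains the maximum;
\item all roots and weights have constant sign, since $\cO$ avoids the hyperplanes and lies in a connected component of the complement.
\end{itemize}
So $\cO$ satisfies the standing hypotheses of Section \ref{S:global-local-estimates}. Rescaling $Y$ by a positive scalar does not change the trajectory $\{\exp(tY)hx:t\ge0\}$ as a set. Hence the diffeomorphism $(Y,h)\mapsto(Y/\theta(Y),h)$ carries $\Pi_x$ restricted to this component onto the set \eqref{E:para-winning}. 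Theorem \ref{T:para-winning} makes that set HAW on $\cO\times H$, and Lemma \ref{L:HAW-mnfd}(iii) transfers this back.

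Since HAW on a manifold is defined chart by chart, and the components give an open cover of $(\cS\setminus\cS')\times H$, the union statement follows. A countable cover would also be harmless by Lemma \ref{L:HAW-mnfd}(ii).

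The main point requiring care is the exceptional set $\cS'$. One must check two things:
\begin{itemize}
\item Only finitely many weights occur across all $r<d$, so $\cS'$ is a finite union of closed submanifolds of strictly lower dimension.
\item Each bad hyperplane is a proper hyperplane, i.e.\ the weight is nonzero as a functional, so it does not contain $\cS$.
\end{itemize}
Zero weights are harmless because "constant sign" holds trivially for them. If one insists on avoiding the issue of $\cS'$ not being closed in $\cS$ near the boundary of the chamber, the fix is to apply Lemma \ref{L:HAW-mnfd}(iv) only within $\cS$, which is open. The hyperplane intersections are closed in $\cS$, so this suffices.
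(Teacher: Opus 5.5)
Your proposal is correct and follows the paper's proof essentially step for step. The paper also removes from $\cS$ the finitely many hyperplanes where two highest roots coincide or a nonzero exterior weight vanishes, and invokes Lemma \ref{L:HAW-mnfd}(iv). It then normalizes each component by the maximizing $\theta_i$ to obtain a parameter space $\cO$ satisfying the hypotheses of Section \ref{S:global-local-estimates}, applies Theorem \ref{T:para-winning}, and pulls back via Lemma \ref{L:HAW-mnfd}(iii).

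One small precision: the representation $\tau$, and hence the exterior weights, depends on which $\theta_i$ is maximal. So your set $Z$ should be the union over the finitely many candidate representations, or be defined component by component. This does not affect the argument.
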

\begin{proof}
First, we remove from $\cS$ the union $\cS_0$ of finitely many hyperplanes, on which either two highest roots take the same values, or a nonzero exterior weight vanishes. In view of Lemma \ref{L:HAW-mnfd} (iv), it suffices to prove the HAW property of the set \eqref{E:product-SH} on each connected component of $(\cS\setminus\cS_0)\times H$. Let $\cO_0$ be one of the connected components of $\cS\setminus\cS_0$. Then on the parameter space $\cO_0$, one highest root \(\theta_i\) is maximal, and all exterior weights have constant signs.

Consider the normalization map $Y\mapsto Y/\theta_i(Y)$ on $\cO_0$: its image $\cO$ is a relatively open subset of the affine subspace $\{\theta_i=1\}$ in $\fa$ satisfying all requirements in the beginning of Section \ref{S:global-local-estimates}. Then Theorem \ref{T:para-winning} implies that for any $x\in X$, the set $$
\{(Y,h)\in \cO\times H:\{\exp(tY)hx:t\ge0\}\text{ is bounded in }X\}
$$
is HAW on $\cO\times H$. Pulling back to $\cO_0\times H$ gives the desired conclusion.
\end{proof}

\subsection{Reductions in the semisimple setting} In this section, we assume that $G$ is a connected semisimple Lie group with finite center and without compact factors, and $\Gamma\leq G$ a lattice. Choose a maximal $\RR$-split Cartan subspace $\fa$, an open positive Weyl chamber $\fa^+$, and an Euclidean norm $\|\cdot\|$ on $\fa$. Let $\Sigma^+$ be the positive restricted root system, put $$
\cS=\left\{Y\in \fa^+:\|Y\|=1\right\},\qquad H=\exp\left(
\bigoplus_{\alpha\in\Sigma^+}\fg_{\alpha}
\right).
$$
Then $H$ is the expanding horospherical subgroup for any $Y\in\cS$.

We present the following result as an intermediate step towards the general theorems, which is of independent interest and can be reduced to Theorems \ref{T:alg-H} and \ref{T:alg-prod} by standard reduction.

\begin{theorem}\label{T:semisimple}
    Let $G,\Gamma$ be as above. Then \begin{enumerate}
        \item[(i)] for any one-parameter $\Ad$-$\RR$-diagonalizable subgroup $F$ of $G$ and any $x\in X=G/\Gamma$, the set $E_{H(F^+),x}(F^+)$ is HAW on $H(F^+)$; 
        \item[(ii)] for any $x\in X=G/\Gamma$, the set $\Pi_x$ is HAW on $\cS\times H$.
    \end{enumerate}
\end{theorem}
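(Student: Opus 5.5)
The plan follows the standard reductions of \cite[Section 5]{KM96}: we peel off the center, the irreducible decomposition of $\Gamma$, and the rank-one or non-arithmetic factors, and then appeal to Theorems \ref{T:alg-H} and \ref{T:alg-prod}.

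\emph{Step 1: center and irreducible pieces.} First I would replace $G$ by $G/Z(G)$. The center is finite, so the projection $G/\Gamma\to (G/Z(G))/(\Gamma Z(G)/Z(G))$ is a finite covering. It therefore preserves boundedness of orbits, and it is a local diffeomorphism on $H$ and on $\cS\times H$, so Lemma \ref{L:HAW-mnfd}(iii) transfers the HAW property. Next, after passing to a finite-index subgroup of $\Gamma$, which is again harmless since the resulting map is a finite cover, I would write $G=G_1\times\cdots\times G_n$ and $\Gamma=\Gamma_1\times\cdots\times\Gamma_n$ with each $\Gamma_j$ irreducible in $G_j$. Then $X=\prod_j G_j/\Gamma_j$, $H=\prod_j H_j$, and $\exp(tY)$ splits as $\prod_j\exp(tY_j)$. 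An orbit is bounded if and only if every projected orbit is bounded. If $Y_j=0$, the $j$-th factor imposes no condition and $H_j$ is trivial.

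\emph{Step 2: a product lemma.} For each factor the relevant set is $E_j\times\prod_{i\neq j}H_i$, which is HAW by Lemma \ref{L:HAW-mnfd}(v). The intersection of these finitely many sets is HAW by Lemma \ref{L:HAW-mnfd}(ii). This proves (i).

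For (ii) the parameter $Y$ couples the factors, so I would argue as follows. Away from a finite union of lower-dimensional submanifolds (handled by Lemma \ref{L:HAW-mnfd}(iv)), reparametrize $\cS$ locally as a product of the directions $Y_j/\|Y_j\|$ and the norms. Boundedness for the factor $j$ depends only on $(Y_j/\|Y_j\|,h_j)$, since rescaling time does not affect boundedness. So each factor gives a set of the form $\Pi^{(j)}\times(\text{other coordinates})$, and we again conclude by (v), (iii) and (ii).

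\emph{Step 3: the irreducible case.} Here there are two subcases. If $\Gamma_j$ is arithmetic, for instance when $\operatorname{rank}_\RR G_j\geq 2$ by Margulis arithmeticity, then up to compact kernels and commensurability $G_j=\bG(\RR)^\circ$ for a $\QQ$-simple $\bG$. Compact factors of $\bG(\RR)$ act trivially up to a proper compact fibration, so they do not affect boundedness. Theorems \ref{T:alg-H} and \ref{T:alg-prod} then apply, with $\Ad$-$\RR$-diagonalizable $F$ conjugated into $\exp(\overline{\fa^+})$ as in the proof of Theorem \ref{T:alg-H}. If instead $G_j$ has real rank one and $\Gamma_j$ is possibly non-arithmetic, I would use the Dani-type argument from Appendix \ref{A:Dani}. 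In that case $H_j$ is the horospherical group of the unique direction, and the set of directions is a single point. Bounded orbits are controlled by the finitely many cusps, via disjoint horoballs and the cusp-excursion description. Alice deletes neighborhoods of the rational horospherical points; these are points on the boundary, and hence hyperplane-type obstacles in $H_j$.

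\textbf{Main obstacle.} I expect the hardest step to be the bookkeeping in the arithmetic subcase. One has to identify $G_j$ with $\bG(\RR)^\circ$ modulo compact factors and finite covers, while keeping the expanding horospherical subgroups and the Weyl chamber directions compatible. In addition, Theorem \ref{T:alg-prod} uses the full chamber of $\bG$, whereas $\cS$ is the chamber of $G_j$, which agrees with it only after discarding compact factors (which have no $\RR$-split part).
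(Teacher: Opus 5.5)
Your proposal is correct and follows essentially the paper's route: reduce by the adjoint finite cover and the product decomposition of Lemma \ref{L:red} to an irreducible lattice, then use Appendix \ref{A:Dani} in real rank one and Margulis arithmeticity with Theorems \ref{T:alg-H} and \ref{T:alg-prod} in higher rank; your explicit splitting $\cS\cong\prod_j\cS_j\times(\text{open simplex})$ is a fine way to make the paper's phrase ``stable under products'' precise for (ii). The one point to tidy is the compact factors: Theorems \ref{T:alg-H} and \ref{T:alg-prod} are stated only when $\bG(\RR)^\circ$ has no compact factors, so instead of a fibration argument the paper first discards the trivial cocompact case, and then a non-cocompact irreducible lattice forces $\bG$ to be $\QQ$-isotropic, so that $\bG(\RR)^\circ\cong G$ has no compact factors.
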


To prove Theorem \ref{T:semisimple}, we first recall some well-known structural results in the theory of semisimple Lie groups. Readers may refer to \cite{Morris15, Rag72} for more details.

\begin{lemma}\label{L:red}
    Let $G$ be a connected semisimple Lie group with finite center and without compact factors, and $\Gamma\leq G$ a lattice. Then there is a finite covering induced by the adjoint representation of $G$: $$
    Z(G)/(Z(G)\cap\Gamma)\to G/\Gamma\xrightarrow{\Ad}\Ad(G)/\Ad(\Gamma). 
    $$
    Moreover, there is a direct-product decomposition $\Ad(G)=G_1\times \cdots\times G_s$ such that $\Ad(\Gamma)$ is commensurable with $\Gamma_1\times \cdots\times \Gamma_s$, where $\Gamma_i:=\Ad(\Gamma)\cap G_i$ is an irreducible lattice in $G_i$.
\end{lemma}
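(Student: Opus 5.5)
The plan is to assemble Lemma \ref{L:red} from three standard structural facts about semisimple groups and lattices, proving the two assertions separately.

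\emph{The finite covering.} Since $G$ is connected, $\ker(\Ad)=Z(G)$, which is finite by assumption. Hence $\Ad:G\to\Ad(G)$ is a finite covering homomorphism onto a connected semisimple linear group with trivial center and no compact factors. Because $Z(G)$ is finite and normal, $\Ad(\Gamma)=\Gamma Z(G)/Z(G)$ is discrete in $\Ad(G)$. It is also a lattice, since a finite-volume fundamental domain for $\Gamma$ in $G$ pushes forward to one for $\Ad(\Gamma)$. The induced map $G/\Gamma\to\Ad(G)/\Ad(\Gamma)$, $g\Gamma\mapsto \Ad(g)\Ad(\Gamma)$, is well defined. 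Its fibres are the orbits of the finite group $Z(G)$ acting by left multiplication: $\Ad(g)\Ad(\Gamma)=\Ad(g')\Ad(\Gamma)$ if and only if $g'\in gZ(G)\Gamma=Z(G)g\Gamma$. The stabilizer of $g\Gamma$ under this action is $Z(G)\cap g\Gamma g^{-1}=Z(G)\cap\Gamma$, using that $Z(G)$ is central. So each fibre is a copy of $Z(G)/(Z(G)\cap\Gamma)$. Since $Z(G)$ is a finite group acting freely modulo $Z(G)\cap\Gamma$, the map is a finite covering, and this is the claimed sequence.

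\emph{The product decomposition.} Since $\Ad(G)$ is centerless and connected semisimple, it is the direct product of its simple factors, with no finite-center ambiguity. Group these factors as follows. By the standard decomposition of lattices in semisimple groups without compact factors (Raghunathan, Corollary 5.22, or Morris, Proposition 4.3.3), there is a partition of the simple factors into blocks with the following property. If $G_i$ denotes the product of the factors in block $i$, then $\Gamma_i:=\Ad(\Gamma)\cap G_i$ is an irreducible lattice in $G_i$, and $\Gamma_1\times\cdots\times\Gamma_s$ has finite index in $\Ad(\Gamma)$; in particular the two are commensurable.

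The proof of this decomposition goes as follows, and it will be cited or sketched in this form. Let $G_i$ be a minimal nontrivial product of simple factors for which $\Ad(\Gamma)\cap G_i$ is a lattice in $G_i$. Iterating over the complement gives a decomposition in which each intersection is a lattice, and minimality gives irreducibility. The one nontrivial input is that the projection of $\Ad(\Gamma)$ to the complementary factor $G_i'$ is discrete exactly when $\Ad(\Gamma)\cap G_i$ is a lattice. This uses the Borel density theorem, which needs the absence of compact factors, together with the fact that a closed subgroup containing a lattice has finite covolume.

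\emph{Main obstacle.} The only delicate step is this last one: checking that the product $\prod_i\Gamma_i$ of the intersections has finite index in $\Ad(\Gamma)$, rather than merely being a discrete subgroup. To get it, compare covolumes. The product $\prod\Gamma_i$ is a lattice in $\prod G_i=\Ad(G)$ and is contained in $\Ad(\Gamma)$. A lattice contained in a discrete group has finite index in it, and this gives commensurability.
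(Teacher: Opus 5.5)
Your proposal is correct and matches the paper, whose proof consists only of citing Raghunathan and Morris (Proposition 4.3.3). Your direct verification of the covering is sound: the fibres are orbits of the finite central group $Z(G)$, with stabilizer $Z(G)\cap\Gamma$.

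One remark on your sketch of the decomposition. The equivalence between discreteness of the projection of $\Ad(\Gamma)$ to $G_i'$ and $\Ad(\Gamma)\cap G_i$ being a lattice is soft and does not use Borel density. Borel density is needed for a different step: showing that the projection of $\Ad(\Gamma)$ to $G_i$ itself is discrete, since it lies in the normalizer of the lattice $\Ad(\Gamma)\cap G_i$, which Borel density makes discrete. That step is what makes $\Ad(\Gamma)\cap G_i'$ a lattice, whereas the finite-index step you single out as the main obstacle is immediate.
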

\begin{proof}
    See \cite[Corollary 5.17]{Rag72} and \cite[Proposition 4.3.3]{Morris15}.
\end{proof}

After this standard reduction, we turn to the proof of Theorem \ref{T:semisimple}.

\begin{proof}[Proof of Theorem \ref{T:semisimple}]
    Note that all conclusions are stable under finite cover and products. By Lemma \ref{L:red}, we may assume that $G$ is a connected semisimple Lie group with trivial center and without compact factors, and that $\Gamma$ is an irreducible lattice in $G$. If $\Gamma$ is cocompact, then all conclusions are clear, so we may assume that $\Gamma$ is not cocompact.

    When $G$ has real rank $1$, we see that $\cS$ consists of a single point, and both conclusions (i) and (ii) follow from an essential modification of Dani's arguments in \cite[Proof of Theorem 4.1]{Dani86} (see Appendix \ref{A:Dani} for details). When $G$ has real rank at least $2$, it follows from Margulis' arithmeticity theorem (see \cite[Proposition 3.1.6 and Corollary 6.1.10]{Zimmer84}) that there exists a connected $\QQ$-simple algebraic group $\bG$ and a Lie group isomorphism
$\phi:\mathbf G(\mathbb R)^\circ\to G$
such that $\Gamma$ is commensurable with $\phi(\Lambda)$, where $\Lambda$ is an arithmetic subgroup of $\bG(\mathbb R)^\circ$. Then the conclusions (i) and (ii) follow from Theorems \ref{T:alg-H} and \ref{T:alg-prod} respectively.
\end{proof}

\subsection{Proof of Theorems \ref{T:main-homo} and \ref{T:main-H} on bounded orbits}
In this section, we prove Theorems \ref{T:main-homo} and \ref{T:main-H} on the winning property of bounded orbits on homogeneous spaces. 

We first deduce Theorem \ref{T:main-homo} from Theorem \ref{T:main-H} using the standard stable--neutral--unstable decomposition of the Lie algebra with respect to an $\Ad$-diagonalizable flow.

\begin{proof}[Proof of Theorem \ref{T:main-homo} assuming the truth of Theorem \ref{T:main-H}.] \text{Let
$F=\{g_t=\exp(tY):t\in\RR\}$} be a one-parameter $\Ad$-diagonalizable subgroup of $G$, let $F^+=\{g_t:t\geq 0\}$, and let $H=H(F^+)$ be its expanding horospherical subgroup. We first show that: if $E_{H,x}(F^+)$ is HAW on $H$ for any $x\in X$, then $E_X(F^+)$ is HAW on $X$.


Write $\fg=\Lie(G)$ and $D=\ad Y$. The assumption on $F$
implies that the complexification of $D$ is diagonalizable.
For each eigenvalue $\lambda$, let $\fg_\lambda$ denote its
eigenspace in $\fg_{\CC}$. Set
$$
 \fg^-=\fg\cap
       \bigoplus_{\Rep\lambda<0}\fg_\lambda,\qquad
 \fg^0=\fg\cap
       \bigoplus_{\Rep\lambda=0}\fg_\lambda,\qquad
 \fg^+=\fg\cap
       \bigoplus_{\Rep\lambda>0}\fg_\lambda.
$$
Note that $\fg=\fg^-\oplus\fg^0\oplus\fg^+$. Put $\fp=\fg^-\oplus\fg^0$ and $\fh=\fg^+$. It is clear that $\fh=\Lie(H)$. Moreover, for any $Z\in \fp$, 
\begin{equation}\label{hom:nonpositive-invariance}
 \exp(Z)y\in E_X(F^+)
 \Longleftrightarrow y\in E_X(F^+).
\end{equation}

Fix $x\in X$. Since the map $$
 \Phi_x:\fp\times H\to X,\qquad
 \Phi_x(Z,h)=\exp(Z)hx,
$$
has an invertible differential at $(0,1_G)$, by the inverse function theorem, there are open neighborhoods $U\subset\fp$ of $0$ and $V\subset H$ of $1_G$ such that $\Phi_x$ restricts to a diffeomorphism from $U\times V$ onto an open neighborhood $O_x$ of $x$.

Using \eqref{hom:nonpositive-invariance} with $y=hx$, we obtain
the exact identity
\[
 \Phi_x^{-1}\bigl(E_X(F^+)\cap O_x\bigr)
   =U\times\bigl(E_{H,x}(F^+)\cap V\bigr).
\]
Since $E_{H,x}(F^+)\cap V$ is HAW on $V$, by Lemma \ref{L:HAW-mnfd} (v), the above right-hand side is HAW on $U\times V$. Then Lemma \ref{L:HAW-mnfd} (iii) shows that $E_X(F^+)\cap O_x$ is HAW on $O_x$. Since $x\in X$ is arbitrary, this proves that $E_X(F^+)$ is HAW on $X$. 

Finally, the same arguments also show that: for $H^-:=H(F^-)$, if $E_{H^-,x}(F^-)$ is HAW on $H^-$ for any $x\in X$, then $E_X(F^-)$ is HAW on $X$. Applying Theorem \ref{T:main-H} to $F^+$ and $F^-$ gives the HAW property of $E_{H,x}(F^+)$ and $E_{H^-,x}(F^-)$ on $H^{\pm}$ for any $x\in X$. Therefore, we conclude that $E_X(F^+)$ and $E_X(F^-)$ are HAW on $X$, so is $E_X(F)=E_X(F^+)\cap E_X(F^-)$ by Lemma \ref{L:HAW-mnfd} (ii). This proves Theorem \ref{T:main-homo}.
\end{proof}

Now we prove Theorem \ref{T:main-H} by reducing the general case to Theorem \ref{T:semisimple} (i).

\begin{proof}[Proof of Theorem \ref{T:main-H}]
    We first reduce the proof to the case when $G$ is connected. In fact, write $x=g\Gamma\in X$. The connected component of $X$ containing $x$ is the image of the embedding $$
    G^0/(G^0\cap g\Gamma g^{-1})\to X,\qquad u\mapsto ux.
    $$
    Since this embedding is $G^0$-equivariant and $F,H$ both lie in $G^0$, we may instead work on this connected component of $X$ containing $x$. 

    We next reduce the proof to the case when $G$ is connected semisimple with trivial center and without compact factors. In fact, let $J$ denote the maximal connected closed amenable normal subgroup of $G$, and consider the projection induced by the group homomorphism $\pi:G\to G/J$: $$
    J/(J\cap\Gamma)\to G/\Gamma\xrightarrow{\pi}\pi(G)/\pi(\Gamma).
    $$
    Here $J\cap \Gamma$ is a cocompact lattice in $J$, $\pi(G)$ is a connected semisimple Lie group without compact factors, and $\pi(\Gamma)$ is a lattice in $\pi(G)$. Then for any $x\in X$, $$
    E_{H,x}(F^+)=H\cap \pi^{-1}(E_{\pi(H),\pi(x)}(\pi(F^+))).
    $$
    By applying Lemma \ref{L:HAW-mnfd} (v) locally, we see that the HAW property of $E_{H,x}(F^+)$ will follow from the HAW property of $E_{\pi(H),\pi(x)}(\pi(F^+))$. So we may instead work on the quotient space $\pi(G)/\pi(\Gamma)$ with the quotient flow $\pi(F)$. Furthermore, consider the finite cover induced by the adjoint representation of $\overline{G}:=\pi(G)$ and $\overline{\Gamma}:=\pi(\Gamma)$: $$
    Z(\overline{G})/(Z(\overline{G})\cap \overline{\Gamma})\to \overline{G}/\overline{\Gamma}\xrightarrow{\Ad}\Ad(\overline{G})/\Ad(\overline{\Gamma}).
    $$
    Here $Z(\overline{G})\cap \overline{\Gamma}$ is of finite index in $Z(\overline{G})$, $\Ad(\overline{G})$ is a connected semisimple Lie group with trivial center and without compact factors, and $\Ad(\overline{\Gamma})$ is a lattice in $\Ad(\overline{G})$.

    We finally reduce an $\Ad$-diagonalizable flow to its $\Ad$-$\RR$-diagonalizable part. Let $F^+=\{g_t:t\geq 0\}$ be a one-parameter $\Ad$-diagonalizable subsemigroup of $G$, and write its multiplicative Jordan decomposition as $g_t=k_tb_t=b_tk_t,$ where $\{k_t:t\geq 0\}$ has compact closure and $\{b_t:t\geq 0\}$ is $\Ad$-$\RR$-diagonalizable. It follows that $$
    H=H(F^+)=H(\{b_t:t\geq 0\}),\quad \text{and}\quad E_{H,x}(F^+)=E_{H,x}(\{b_t:t\geq 0\}).
    $$
    So we may instead work with the $\Ad$-$\RR$-diagonalizable flow $\{b_t:t\geq 0\}$.

    After these three reductions, we conclude that the conclusion follows from Theorem \ref{T:semisimple} (i).
\end{proof}

\subsection{Proof of Theorem \ref{T:main-bi} on bounded geodesics} In this section, we prove Theorem \ref{T:main-bi} on the winning property of bounded geodesics on locally symmetric spaces, by reducing it to Theorem \ref{T:semisimple} (ii).

We start with some basic structural results in the theory of semisimple Lie groups. Let $G,K,\Gamma,\fg,\fk,\fp$ be as in Section \ref{SS:geo}. Choose a maximal abelian subspace $\mathfrak a\subset\fp$ and an open positive chamber $\mathfrak a^+$. We set $\cS:=\fa^+\cap S(\fp)$, let $N^+$ (resp. $N^-$) be its expanding (resp. contracting) horospherical subgroup, and write $$
A=\exp(\fa),\qquad L=Z_{K}(\fa),\qquad P^-=LAN^-.
$$
Here $P^-$ is the minimal parabolic subgroup opposite to $N^+$.

\begin{lemma}\label{L:str}\begin{enumerate}
    \item Let $S(\fp)_{\mathrm{reg}}$ be the subset of vectors in $S(\fp)$ whose $\Ad(K)$-orbits have maximal dimension. Then \begin{equation}\label{geo:polar-coordinates}
 \Theta:\cS\times L\backslash K\to
                S(\fp)_{\mathrm{reg}},\qquad
 \Theta(v,Lk)=\Ad(k^{-1})v
\end{equation}
is a smooth diffeomorphism.
    \item Let $g\in G$. Then there is an open dense set
$Q\subset L\backslash K$ and a diffeomorphism $u:Q\to N^+$
such that for any $Lk\in Q$, one has
\begin{equation}\label{geo:big-cell-factorization}
 kg=p(k)u(Lk)\quad\text{ for some } p(k)\in P^-.
\end{equation}
The complement of $Q$ is a finite union of smooth submanifolds of
strictly smaller dimension.
\end{enumerate}
\end{lemma}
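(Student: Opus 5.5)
Both parts are classical facts about the structure of real semisimple Lie groups. The plan is to derive them from the $KAK$ decomposition and the Bruhat decomposition, respectively.

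\textbf{Part (1).} Every $v\in\fp$ is $\Ad(K)$-conjugate into $\overline{\fa^+}$, and the conjugate is unique. A vector $v\in S(\fp)$ has an $\Ad(K)$-orbit of maximal dimension exactly when its conjugate lies in the open chamber $\fa^+$, since the stabilizer in $\fk$ of $Y\in\fa$ is $Z_\fk(Y)$, which equals $\Lie(L)$ precisely for regular $Y$. Hence $\Theta$ is surjective onto $S(\fp)_{\mathrm{reg}}$.

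For injectivity, suppose $\Ad(k_1^{-1})v_1=\Ad(k_2^{-1})v_2$ with $v_i\in\cS$. Uniqueness of the chamber representative gives $v_1=v_2$. Then $k_2k_1^{-1}$ lies in the stabilizer $Z_K(v_1)$, and this stabilizer equals $L$ for regular $v_1$ (a standard fact, e.g.\ from Knapp). So $Lk_1=Lk_2$.

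For smoothness, $\Theta$ is the restriction of the smooth map $\fa^+\times L\backslash K\to\fp$. Its differential at $(v,Lk)$ is injective, because $T_v\fa\oplus[\fk\ominus\Lie(L),v]$ is a direct sum inside $\fp$: indeed $[X_\alpha+\theta X_\alpha,v]$ is a nonzero multiple of $X_\alpha-\theta X_\alpha$, since $\alpha(v)\neq0$. Both sides have the same dimension, so $\Theta$ is a bijective local diffeomorphism, hence a diffeomorphism.

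\textbf{Part (2).} Use the Bruhat decomposition $G=\bigsqcup_{w\in W}P^-wN^+$. The big cell $P^-N^+$ is open and dense, the multiplication map $P^-\times N^+\to P^-N^+$ is a diffeomorphism, and the complement is a finite union of lower-dimensional cells $P^-wN^+$. Define $Q:=\{Lk: kg\in P^-N^+\}$, which is well defined since $L\subset P^-$, and set $u(Lk)$ to be the $N^+$-component of $kg$. Then \eqref{geo:big-cell-factorization} holds, and $u$ is smooth.

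To see that $u$ is a diffeomorphism onto $N^+$, use the identification $L\backslash K\cong P^-\backslash G$, which comes from $G=KP^-$ (Iwasawa) and $K\cap P^-=L$. Right multiplication by $g$ is a diffeomorphism of $P^-\backslash G$. Under these identifications, $Q$ corresponds to the preimage of the open big cell $P^-\backslash P^-N^+$, and the latter is diffeomorphic to $N^+$ via $n\mapsto P^-n$. Thus $u$ is a composition of diffeomorphisms.

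The complement of $Q$ is the image under these diffeomorphisms of the finitely many Schubert cells $P^-\backslash P^-wN^+$ with $w\neq e$. These are smooth submanifolds of strictly smaller dimension, and their union is closed.

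The main obstacle is the careful bookkeeping in these identifications: $L\backslash K\cong P^-\backslash G$ requires $K\cap P^-=L$, and one must check that the Bruhat cells are embedded submanifolds. I will cite standard references (Knapp; Borel–Tits) for these points rather than reprove them.
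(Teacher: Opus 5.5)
Your proposal is correct and follows essentially the same route as the paper. For (1), both show that $\Theta$ is a smooth bijection whose differential $(w,\bar z)\mapsto \Ad(k^{-1})(w+[v,z])$ is invertible; you additionally supply the surjectivity, the injectivity via $Z_K(v)=L$ for regular $v$, and the root computation that the paper calls ``clear''. For (2), both identify $L\backslash K\cong P^-\backslash G$ via $G=P^-K$ and $P^-\cap K=L$, translate on the right by $g$, and pull back the open big cell $P^-\backslash P^-N^+\cong N^+$; you also use the Bruhat decomposition $G=\bigsqcup_{w}P^-wN^+$ to justify that the complement of $Q$ is a finite closed union of lower-dimensional cells, which the paper simply asserts.
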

\begin{proof}
(1) It is clear that $\Theta$ is a well-defined smooth bijection. At any point $(v,Lk)$ in $\cS\times L\backslash K$, we calculate that $$
\mathrm{d}\Theta_{(v,Lk)}:(w,\overline{z})\mapsto \Ad(k^{-1})(w+[v,z]).
$$ 
Since this differential is always invertible, we see that $\Theta$ is a local diffeomorphism.~\\
(2) The Iwasawa decomposition $G=P^-K$, together with $P^-\cap K=L$, gives a diffeomorphism $L\backslash K\to P^-\backslash G$.
Let $g\in G$. Then the map
$$
 \sigma:L\backslash K\to P^-\backslash G,\qquad Lk\mapsto P^-kg
$$
is also a diffeomorphism. Consider an open dense subset of $P^-\backslash G$: $$
\cO=\{P^-u:u\in N^+\}.
$$
Then $Q:=\sigma^{-1}(\cO)$ is an open dense subset of $L\backslash K$, and its complement is a finite union of smooth submanifolds of
strictly smaller dimension. Since the map $$\varphi:N^+\to\mathcal O,\qquad u\mapsto P^-u$$ is a diffeomorphism, we see that the composite map $u:=(\varphi^{-1}\circ \sigma)\big|_Q$ is also a diffeomorphism such that \eqref{geo:big-cell-factorization} holds. This completes the proof.
\end{proof}

Now we prove Theorem~\ref{T:main-bi} by reducing it to Theorem \ref{T:semisimple} (ii).

\begin{proof}[Proof of Theorem~\ref{T:main-bi}]
Write $M=K\backslash G/\Gamma$, where $G$ is a connected semisimple Lie group with trivial center and without compact factors, $K\leq G$ is a maximal compact subgroup, and $\Gamma\leq G$ is a torsion-free lattice. We fix $p=[Kg\Gamma]\in M$, identify $S_pM$ with $S(\fp)$, and hence get 
\begin{equation*}\label{E:geo}
			E^+(p,\infty)\cong\{v\in S(\fp): \{K\exp(tv)g\Gamma: t\geq 0\} \text{ is bounded in }M\}=:E^+.
	\end{equation*}
Let $\Theta,u$ be given by Lemma \ref{L:str}, and write $x_0=1_G\Gamma\in X=G/\Gamma$. We claim that: \begin{equation}\label{E:keyid}
    (\cS\times Q)\cap \Theta^{-1}(E^+)=(\id_{\cS}\times u)^{-1}(\Pi_{x_0}).
\end{equation}

In fact, it is straightforward to check that: for any $(v,Lk)\in \cS\times Q$, \begin{subequations}
    \begin{align*}
        (v,Lk)\in\Theta^{-1}(E^+)&\Longleftrightarrow \{\exp(tv)kg\Gamma:t\geq 0\}\text{ is bounded in }X;\\
        (v,Lk)\in (\id_{\cS}\times u)^{-1}(\Pi_{x_0})&\Longleftrightarrow \{\exp(tv)u(Lk)\Gamma:t\geq 0\}\text{ is bounded in }X.
    \end{align*}
\end{subequations}
Note that $$
\exp(tv)kg\Gamma=(\exp(tv)p(k)\exp(-tv))\cdot \exp(tv)u(Lk)\Gamma,
$$
where $\{\exp(tv)p(k)\exp(-tv):t\geq 0\}$ is bounded in $G$. So we conclude that \eqref{E:keyid} holds.

Now Theorem~\ref{T:semisimple} (ii) implies that the set $\Pi_{x_0}$ is HAW on $\cS\times N^+$. By the claim \eqref{E:keyid} and Lemma \ref{L:HAW-mnfd} (iii), we see that the set $E^+$ is HAW on $\Theta(\cS\times Q)$. Moreover, note that
\begin{subequations}
    \begin{align*}
        S(\fp)_{\mathrm{reg}}\setminus\Theta(\cS\times Q)
       &=\Theta\bigl(\cS\times((L\backslash K)\setminus Q)\bigr);\\
       S(\fp)\setminus S(\fp)_{\mathrm{reg}}&=\{v\in S(\fp):\dim\Lie(Z_K(v))>\dim\Lie(L)\}
    \end{align*}
\end{subequations}
are both finite unions of smooth submanifolds of strictly smaller dimension. Then Lemma~\ref{L:HAW-mnfd} (iv) shows that the set $E^+$ is HAW on $S(\fp)$. This completes the proof.
\end{proof}

\appendix 
\section{Proof of Theorem \ref{T:semisimple} in the real rank 1 case}\label{A:Dani} In this appendix, we give a complete and detailed proof of Theorem \ref{T:semisimple} in the case when $G$ has real rank $1$. This result has been announced in \cite[Introduction]{KM96} and \cite[Section 7]{AGK} and was attributed to \cite{Dani86}; however, neither of these papers extracts a rigorous proof from Dani's arguments in \cite[Proof of Theorem 4.1]{Dani86}, especially in the case when the expanding horospherical subgroup $H$ of $G$ is non-abelian. We address this issue here.

\begin{proposition}\label{P:rank-one-haw}
Let $G$ be a connected semisimple real Lie group of real rank one, with finite center and without compact factors.  Let $\Gamma<G$ be a lattice, $F\leq G$ a one-parameter $\Ad$-$\RR$-diagonalizable subgroup of $G$, and write $H=H(F^+)$. Then for any $x\in X=G/\Gamma$, the set 
$$
 E_{H,x}=\{h\in H:F^+hx\text{ is bounded in }X\}
$$
is HAW on $H$.
\end{proposition}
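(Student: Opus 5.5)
The plan is to run Dani's horoball argument directly in the hyperplane absolute game on $H$, after replacing the Heisenberg-type geometry of $H$ by Euclidean coordinates at every scale. We argue on a chart $z\mapsto h(z)=\exp(z)$ of $H$ restricted to a compact set $\cB$, as in Section~\ref{S:global-local-estimates}. As in the proof of Theorem~\ref{T:semisimple}, we may pass to the adjoint group and assume $\Gamma$ is irreducible and non-uniform. We normalize $F^+=\{\exp(tY)\}$ with $\theta(Y)=1$, so that $\Ad(\exp(tY))$ scales $\fg_\theta$ by $e^{t}$ and the other positive root space (if any) by $e^{t/2}$. We identify $H\cong N^+$ with the open big cell of $G/P^-$, which in rank one is the boundary sphere minus one point.

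\textbf{Reduction theory.} By Garland--Raghunathan reduction theory, there are finitely many $\Gamma$-conjugacy classes of cuspidal parabolics. There is also a $\Gamma$-invariant family of pairwise disjoint horoballs $\{\mathcal H_\xi\}$, indexed by the parabolic fixed points $\xi$, such that the complement of their union in $K\backslash G$ projects to a compact set. Consequently $F^+h x$ is bounded if and only if $\sup_t$ of the depth of $K\exp(tY)hg$ inside the horoballs is finite.

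\textbf{Dangerous sets.} Fix a threshold depth $s_0$. Transporting to $H$ via the big cell, the set of $h$ whose ray penetrates $\mathcal H_\xi$ beyond depth $s_0$ is explicit. Write the horoball $\mathcal H_\xi$ as based at the point $h_\xi\in H$ with "size" $e^{-T_\xi}$. The set is then a left-translate of a homogeneous quasi-ball $h_\xi\cdot\delta_{r}(\mathcal U)$, where $\delta_r$ is the dilation $(z_\alpha,z_{2\alpha})\mapsto(rz_\alpha,r^2z_{2\alpha})$ and $r\asymp e^{-(T_\xi+s_0)/2}$. This is a computation with the Iwasawa/Bruhat decomposition in rank one, and it is the part of Dani's argument that must be redone carefully for non-abelian $H$.

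The key observation is the following. Left translation by $h_\xi$ with $h_\xi,z\in\cB$ is a polynomial map with uniformly bounded derivative. Moreover, $\delta_r(\mathcal U)$ has Euclidean diameter $\ll r$ for $r\le1$, because $r^2\le r$. Hence each dangerous set $\cD_\xi$ has Euclidean diameter at most $C e^{-(T_\xi+s_0)/2}$.

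\textbf{Separation.} Disjointness of the horoballs gives the separation input. Take two parabolic points $\xi\ne\xi'$ whose sizes satisfy $e^{-T_\xi},e^{-T_{\xi'}}\in[\sigma\beta^2,\sigma)$. Then their bases satisfy $d(h_\xi,h_{\xi'})\ge c\,\sigma^{1/2}$ in the quasi-metric, which is $\ge c'\sigma$ in Euclidean terms on $\cB$. I expect this step, a uniform Dani-type separation lemma on a non-abelian $H$, to be the main obstacle. I would prove it by observing that two disjoint horoballs of comparable size $\sigma$, lifted to the Siegel-domain model, have bases at Carnot distance $\gtrsim\sigma^{1/2}$. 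This follows by conjugating one horoball to $\infty$ and using the explicit horoball equation.

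\textbf{Alice's strategy.} Given $\beta\in(0,\frac13)$, choose $s_0$ so large that $Ce^{-s_0/2}<\beta/4$ and that the separation constant is large relative to $\beta$. When Bob plays a ball $B_q$ of radius $\rho_q$, consider the parabolic points $\xi$ with $e^{-T_\xi/2}\in[\rho_q\beta,\rho_q)$ whose dangerous sets meet $B_q$. By separation, after $s_0$ is increased, at most one such $\xi$ exists. Its dangerous set has diameter $<\beta\rho_q/2$, so it lies in a hyperplane neighborhood $L^{(\beta\rho_q)}$, and Alice deletes that neighborhood.

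Since $\rho_q\to0$ geometrically with consecutive ratios in $[\beta,\beta^{\text{bounded}}]$, every parabolic point of size below the initial scale is handled at some stage. Only finitely many parabolic points have size at least $\rho_0$, and they meet $B_0$ in finitely many dangerous sets. Bob is then forced off them after finitely many moves, or Alice deletes them one per move while waiting, which is legal since she can spend extra moves. The outcome point therefore penetrates no horoball beyond depth $s_0$, so its orbit is bounded. This shows the set is $\beta$-HAW for every $\beta$, and hence HAW on $H$ by Lemma~\ref{L:HAW-mnfd}.
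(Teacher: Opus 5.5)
The proposal fails at the step ``by separation, after $s_0$ is increased, at most one such $\xi$ exists''. You handle a horoball when its Carnot size $e^{-T_\xi/2}$ is comparable to Bob's radius $\rho_q$, that is, when its size $\sigma=e^{-T_\xi}$ is $\asymp\rho_q^2$. The separation you prove for bases of comparable size $\sigma$ is $\gtrsim\sigma^{1/2}$ in the quasi-metric. As you note yourself, this gives only Euclidean separation $\gtrsim\sigma\asymp\rho_q^2$, because two bases may differ only in the central $\fg_{2\alpha}$-coordinate, by about $\rho_q^2$. That is far below $\rho_q$, so it cannot stop many such dangerous sets from meeting the Euclidean ball $B_q$. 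Increasing $s_0$ does not help: it shrinks each dangerous set around its base but does not change how many bases lie in $B_q$. In fact there are $\asymp s^{-Q}$ horoballs of Carnot size $\asymp s$ per unit volume, with $Q=\dim\fg_\alpha+2\dim\fg_{2\alpha}$. Pigeonholing then shows that some Euclidean ball of radius $\rho_q$ meets $\gtrsim\rho_q^{-\dim\fg_{2\alpha}}$ of them. So the uniqueness claim is false in general, and nothing in your argument lets Alice dispose of all of them with one neighbourhood of width $\beta\rho_q$. For non-abelian $H$, which is exactly the case needing new work, the strategy is therefore not shown to win. Even for abelian $H$, the band $[\beta\rho_q,\rho_q)$ only gives a bounded number of such horoballs, not one. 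You must shift to sizes $\gg\rho_q$, which is what the factor $\kappa$ does in the paper.

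The missing idea is that the dangerous set is \emph{thin}, not merely small; you discard this when you write $r^2\le r$. Since $\fn=\fg_\alpha\oplus\fg_{2\alpha}$ is two-step, BCH shows the following (Section~\ref{SS:thinness}). The translate of $\delta_r(\mathcal U)$ by the base point lies in the $O(r^2)$-neighbourhood of the affine hyperplane $\ell\bigl(v-v_p-\tfrac12[u-u_p,u_p]\bigr)=0$, even though its diameter is $\asymp r$. (For the orbit $\exp(tY)hx$ it is a right translate, $\exp(D_{\delta\tau_p/C}(\Omega))\cdot n_p$; the side is immaterial here.) The paper then handles each horoball at the Euclidean scale of the central direction: index $p$ is treated at stage $q$ when $\rho_q/\kappa\le\tau_p<\rho_{q-1}/\kappa$ (Lemma~\ref{L:thin-separated-haw}). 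At that scale the weak Euclidean separation $\dist(\cD(p,\delta),\cD(p',\delta))\ge\epsilon(\tau_p+\tau_{p'})\ge 2\epsilon\rho_q/\kappa>2\rho_q$ does give uniqueness within a band. The unique dangerous set may have diameter $\asymp(\delta\tau_p)^{1/2}$, much larger than $B_q$, but it still lies in a slab of width $\delta\tau_p<\beta\rho_q$, which is one legal Alice move. With only a diameter bound you are forced into your Carnot matching, where the count fails. With the Euclidean matching the count works but the diameter is too large. The slab bound is what makes both work at once. Finally, the depth threshold must be chosen after $B_0$ (the paper takes $\kappa<\rho_0/M$ and $\delta<\beta^2\kappa$), so that every horoball meeting $B_0$ falls into some band $q\ge1$. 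Deleting the large ones ``one per move while waiting'', as you propose, would compete with the moves needed for the current band.
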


\subsection{A winning lemma}

We begin with a simple lemma concerning the HAW property for certain subsets in Euclidean spaces. This is an adaptation of \cite[Theorem~3.2]{Dani86} to the hyperplane absolute game, with the same thickness and separation hypotheses.

For a nonempty subset $\cD\subset\RR^a$, we define its \textsl{thickness} as
$$
 \tau(\cD):=\inf_V\sup_{x,y\in \cD}\dist(x-y,V),
$$
where $V$ ranges over all linear hyperplanes in $\RR^a$. Write
$\tau(\emptyset)=0$. For an affine hyperplane $L$, write
$L^{(r)}=\{v:\dist(v,L)\leq r\}$. Then we note that if $\tau(\cD)<r$, then $\cD\subset L^{(r)}$ for some affine hyperplane $L$ in $\RR^a$.

\begin{lemma}\label{L:thin-separated-haw}
Let $\{\cD(p,\delta):p\in\NN,\delta\in (0,1)\}$ be a family of subsets in $\RR^a$. Suppose that for any compact subset $B$ of $\RR^a$ and any $\beta\in(0,1)$, there exist some $M\geq1$, $\epsilon\in(0,1)$, and a sequence $\{\tau_p:p\in\NN\}$ of positive numbers such that:
\begin{enumerate}
 \item[(i)] for any $p\in\NN$ and any $\delta\in(0,\epsilon)$ with
       $\cD(p,\delta)\cap B\neq\varnothing$, one has
       $$\tau_p\leq M\qquad\text{and}\qquad
        \tau(\cD(p,\delta))\leq \delta\tau_p;$$
 \item[(ii)] for any $p,p'\in\NN$ and any $\delta\in(0,\epsilon)$ with $\cD(p,\delta)\cap B\neq \varnothing$, $\cD(p',\delta)\cap B\neq\varnothing$, and $\beta\tau_p\leq\tau_{p'}\leq\beta^{-1}\tau_p$, one has either $p=p'$, or      $$\dist(\cD(p,\delta),\cD(p',\delta))\geq\epsilon(\tau_p+\tau_{p'}).$$
\end{enumerate}
Let $$
S=\bigcup_{0<\delta<1}\left(\RR^a\setminus\bigcup_{p\in\NN}\cD(p,\delta)\right).
$$
Then $S$ is hyperplane absolute winning.
\end{lemma}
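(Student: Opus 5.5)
We describe a strategy for Alice in the $\beta$-hyperplane absolute game; the winning set will be $S$. Fix $\beta\in(0,\frac13)$ and let Bob's first ball be $B_0$ of radius $\rho_0$. Apply the hypotheses with $B:=B_0$ (compact) and with the parameter $\beta':=\beta^{k}$ for a suitable fixed integer $k\ge1$ chosen below; this yields $M,\epsilon,(\tau_p)$. Choose $\delta\in(0,\epsilon)$ so small that $\delta M<\beta\rho_0$. It suffices to show that Alice can force the outcome point to avoid $\bigcup_p\cD(p,\delta)$, since that point then lies in $S$. We may assume $\rho_q\to0$; otherwise Alice wins by default.

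First, group the indices $p$ with $\cD(p,\delta)\cap B_0\ne\varnothing$ into scales. Set $\sigma_p:=\delta\tau_p\le\delta M<\beta\rho_0$; by (i), $\tau(\cD(p,\delta))\le\sigma_p$. Bob's radii satisfy $\rho_{q+1}\ge\beta\rho_q$ and $\rho_q\to0$, so for each $p$ there is a first stage $q(p)$ with $\rho_{q(p)}\le\beta^{-1}\sigma_p$. For this $q(p)$, the inequality $\rho_{q(p)-1}>\beta^{-1}\sigma_p$ and $\rho_{q(p)}\ge\beta\rho_{q(p)-1}$ give $\sigma_p<\rho_{q(p)}\le\beta^{-1}\sigma_p$. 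At stage $q$, Alice only has to handle those $p$ with $q(p)=q$ and $\cD(p,\delta)\cap B_q\neq\varnothing$. For any two such $p,p'$, the ratio $\tau_p/\tau_{p'}$ lies in $[\beta,\beta^{-1}]\subseteq[\beta',\beta'^{-1}]$. Hypothesis (ii) then says that distinct such sets are separated by at least $\epsilon(\tau_p+\tau_{p'})\ge\epsilon\delta^{-1}(\sigma_p+\sigma_{p'})\ge\epsilon\delta^{-1}\beta\rho_q$. This exceeds $2\rho_q$ once $\delta<\epsilon\beta/2$, which we additionally impose. Hence at most one such $p$ meets $B_q$.

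Alice's strategy at stage $q$ is as follows. If there is a (necessarily unique) $p$ with $q(p)=q$ and $\cD(p,\delta)\cap B_q\ne\varnothing$, then $\tau(\cD(p,\delta))\le\sigma_p<\rho_q$, so by the remark preceding the lemma, $\cD(p,\delta)$ lies in some neighborhood $L^{(\sigma_p')}$ with $\sigma_p'$ slightly larger than $\sigma_p$. Alice needs this width to be at most $\beta\rho_q$, but the estimate above only gives $\sigma_p\le\rho_q$. To fix this, Alice either shifts the schedule, defining $q(p)$ as the first stage with $\rho_q\le\beta^{-2}\sigma_p$ (which guarantees $\sigma_p\le\beta^2\rho_{q}\le\beta\rho_q$ since $\rho_q>\beta^{-1}\cdot$previous bound), or enlarges $k$ so that the separation argument covers ratios up to $\beta^{\pm2}$. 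She then deletes the neighborhood $L^{(\beta\rho_q)}\supseteq\cD(p,\delta)$. If no such $p$ exists, she deletes an arbitrary legal neighborhood. Every $p$ whose set meets the final point satisfies $\cD(p,\delta)\cap B_{q(p)}\neq\varnothing$, so it was removed at stage $q(p)$. Thus the outcome avoids all $\cD(p,\delta)$.

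The main obstacle is the bookkeeping in the previous paragraph. It must be done consistently: each $p$ must be assigned to a single stage at which its thickness is at most $\beta\rho_q$, and the separation must be strong enough that only one $p$ per stage can meet $B_q$. Both requirements come down to choosing $k$ (equivalently the ratio window in (ii)) and $\delta$ appropriately in terms of $\beta$ and $\epsilon$. The case where $\tau(\cD)$ is nearly equal to its bound, so that a non-strict inequality cannot produce a strictly containing slab, is handled by the slack obtained from $\delta<\epsilon\beta^2/4$.
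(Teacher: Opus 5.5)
Your proposal is correct and is essentially the paper's argument. It sorts the indices whose sets meet $B_0$ into scale bands between consecutive radii of Bob, uses (ii) to show that at most one index per band meets $B_q$, and uses (i) to cover that one set by a single slab of width $\beta\rho_q$; your shifted schedule ($q(p)$ the first stage with $\rho_q\le\beta^{-2}\delta\tau_p$, together with $\delta<\epsilon\beta^2$) is the paper's band construction $\rho_q\le\kappa\tau_p<\rho_{q-1}$ with $\kappa$ playing the role of $\beta^{-2}\delta<\epsilon$.

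One slip to fix: that schedule gives $\sigma_p<\beta^2\rho_{q-1}\le\beta\rho_q$, and this strict band inequality supplies the slack for the slab. It does not give $\sigma_p\le\beta^2\rho_q$, which goes the wrong way. Also, since $\rho_q\ge\beta\rho_{q-1}$, the ratio window $[\beta,\beta^{-1}]$ already suffices, so $k=1$ and your second alternative (enlarging $k$) is unnecessary.
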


\begin{proof}
Fix $0<\beta<1/3$, and let $B_0$ be Bob's initial closed ball with radius $\rho_0>0$. In view of our assumption applied to $B=B_0$, there exist some $M\geq 1$, $\epsilon\in (0,1)$ and a sequence $\{\tau_p:p\in\NN\}$ of positive numbers with the conditions (i) and (ii).
Choose
\begin{equation}\label{app:delta-choice}
 0<\kappa<\min\left\{\epsilon,\frac{\rho_0}{M}\right\},\qquad 
 0<\delta<\min\{\epsilon,\beta^2\kappa\}.
\end{equation}
By (i), each $\cD(p,\delta)$ meeting $B_0$ has $\tau_p\leq M<\rho_0/\kappa$.

Alice's move at $q=0$ may be arbitrary. Suppose Bob's balls are $B_q$ with radius $\rho_q>0$. Then $B_q\subseteq B_0$ and $\rho_q\geq\beta \rho_{q-1}$ for all $q\in\NN$.  If $\rho_q\not\to 0$ as $q\to +\infty$, then Alice wins by default.  Otherwise, let $z_\infty$ denote the unique point in $\bigcap_{q\in\NN}B_q$.  

For each $q\in\NN$, Alice considers the set of indices
\begin{equation}\label{app:scale-band}
 I_q:=\left\{p\in\NN:\frac{\rho_q}{\kappa}\leq\tau_p<\frac{\rho_{q-1}}{\kappa}\right\}.
\end{equation}
We claim that: $$
\left|\left\{p\in I_q:\cD(p,\delta)\cap B_q\neq\varnothing\right\}\right|\leq 1.
$$ In fact, for any $p,p'\in I_q$, we have $\beta\tau_p<\beta\rho_{q-1}/\kappa\leq \rho_q/\kappa\leq \tau_{p'}$, and similarly, $\tau_{p'}\leq \beta^{-1}\tau_p$. If $p\neq p'$ satisfy $\cD(p,\delta)\cap B_q\neq\varnothing$ and $\cD(p',\delta)\cap B_q\neq\varnothing$, then the condition (ii) gives 
$$
 \dist(\cD(p,\delta),\cD(p',\delta))\geq\epsilon(\tau_p+\tau_{p'})
 \geq\frac{2\epsilon}{\kappa}\rho_q>2\rho_q,
$$
which is impossible for two sets $\cD(p,\delta)$ and $\cD(p',\delta)$ meeting the same ball $B_q$ of radius $\rho_q$. This verifies the claim.

Let $q\in\NN$. If there exists an index $p\in I_q$ such that
$\cD(p,\delta)\cap B_q\neq\varnothing$, then this index
is unique, and by the condition (i), we have $$
 \tau(\cD(p,\delta))\leq\delta\tau_p<\frac{\delta}{\kappa}\rho_{q-1}<\beta^2\rho_{q-1}\leq \beta\rho_q.
$$
So Alice can choose an affine hyperplane $L_q\subseteq \RR^a$ such that $\cD(p,\delta)\subseteq L_q^{(\beta \rho_q)}$. This gives $$
\bigcup_{p\in I_q}(\cD(p,\delta)\cap B_q)\subseteq L_q^{(\beta \rho_q)}.
$$
Otherwise, Alice makes an arbitrary move.

Note that $\bigcup_{q\in\NN}I_q\supseteq\{p\in\NN:\cD(p,\delta)\cap B_0\neq \varnothing\}$. Then we have $$
z_{\infty}\in
\bigcap_{q\in\NN}B_q\setminus \bigcup_{q\in\NN}L_q^{(\beta \rho_q)}
\subseteq 
\bigcap_{q\in\NN}B_q\setminus \bigcup_{q\in\NN}\bigcup_{p\in I_q}\cD(p,\delta)
\subseteq B_0\setminus \bigcup_{p\in\NN}\cD(p,\delta),
$$
so $z_{\infty}\in S$. This proves that the set $S$ is HAW on $\RR^a$.
\end{proof}

\subsection{Notations from Dani's paper}\label{SS:Dani-notation}

We collect some notations from \cite[Sections~1, 2 and 4]{Dani86} that are used later. In this subsection, we assume that $\Gamma$ is not cocompact in $G$.

\subsubsection{Cusp neighborhoods for $G/\Gamma$}
Let $G$ be a connected semisimple real Lie group of real rank one, with finite center and without compact factors, and let $\Gamma\leq G$ be a lattice. We write $A=F=\{\exp(tY):t\in\RR\}$, and $$
 A^+=\{\exp(tY):t>0\},\qquad A^-=\{\exp(tY):t<0\},\qquad A_s=\{\exp(-tY):t>s\}.
$$ 
Write $N=H(F^+)$ and $N^-=H(F^-)$. Let $K\leq G$ be a maximal compact subgroup compatible with $A$, and write $L=Z_{K}(A)$, $P=N_G(N)=LAN$. 

Choose $w\in K$ such that $waw^{-1}=a^{-1}$ for all $a\in A$. Then $wNw^{-1}=N^-$.
We have the Iwasawa decomposition and the Bruhat decomposition
$$ G=KAN,\qquad G=PwN\sqcup P,$$
and write the Iwasawa projection to be $$
 \pi:G\to N,\qquad \pi(kan)=n
 \quad\text{for }k\in K,\ a\in A,\ n\in N.
$$
By \cite[Proposition~2.2]{Dani86}, there exists a compact set $\Sigma\subset N$, some $\rho\in\RR$, and a finite set $\Lambda\subset G$ of cusp representatives such that
$$G=KA_{\rho}\Sigma\Lambda\Gamma.$$
We also choose $\sigma\geq\rho$ as in \cite[Remark~2.3]{Dani86} so that two cusp neighborhoods $KA_\sigma N\lambda\gamma$ and $KA_\sigma N\lambda'\gamma'$ are equal if and only if
$$
 \lambda=\lambda'\quad\text{and}\quad
 \lambda\gamma\gamma'^{-1}\lambda^{-1}\in LN.
$$
Otherwise they are disjoint.

Fix $\lambda\in\Lambda$. We choose a sequence of representatives $(g_p)_{p\in\NN}$ in $\Lambda\Gamma\lambda^{-1}\setminus P$ such that $$
\{(LN)g:g\in \Lambda\Gamma\lambda^{-1}\setminus P\}
=\{(LN)g_p:p\in\NN\},\quad \text{where }(LN)g_p \text{ are pairwise disjoint}.
$$
For $p\in\NN$, we write the Bruhat decomposition of $g_p$ as
\begin{equation}\label{app:bruhat-data}
 g_p=m_pa_pw n_p,\quad\text{where }
 m_p\in LN,\quad a_p=\exp(t_pY)\in A,\quad n_p\in N.
\end{equation}
Then the sets $KA_\sigma Ng_p$ are pairwise disjoint and are disjoint from $KA_\sigma N$. 
The same argument as in \cite[Proof of Proposition~2.5]{Dani86} gives
\begin{equation}\label{app:height-bound}
 t_p\leq2\sigma\qquad\text{for all }p\in\NN.
\end{equation}
Write $\fn=\Lie(N)$. 
We choose an inner product on $\fn$ for which the eigenspaces of $\Ad(\exp Y)|_{\mathfrak n}$ are orthogonal. Let $\eta>1$ be its largest eigenvalue.

\subsubsection{A sequence $\{\tau_p:p\in\NN\}$ and a family of dangerous sets $\{\cD(p,\delta):p\in\NN,\;\delta\in (0,1)\}$}

By \cite[Corollary~1.7 and Lemma~1.8]{Dani86}, the set $\pi(A^+N^-)$ is relatively compact and contains a neighborhood of the identity. Write $\Omega=\log\pi(A^+N^-)$, and choose 
\begin{equation}\label{E:tau-p}
 C:=\max\left\{1,2\sup_{z\in\Omega}\|z\|\right\},\qquad M:=\max\{1,C\eta^{2\sigma}\},\qquad \tau_p:=C\eta^{t_p}.
\end{equation}
Then $0<\tau_p\leq M$ for all $p\in\NN$.

 For $\delta>0$, write
$$
 b_\delta=\exp\left(-\frac{\log\delta}{\log\eta}Y\right),
 \qquad
 D_\delta=\Ad(b_\delta^{-1})|_{\mathfrak n}.
$$
Note that 
$D_\delta\to0$ as $\delta\to 0^+$.

For $p\in\NN$ and $0<\delta<1$, we define the dangerous set $\cD(p,\delta)$ to be \begin{equation}\label{E:Dani-danger}
    \cD(p,\delta):=\log\pi(A^+N^-b_{\delta}a_p^{-1}n_p)=\log(\exp(D_{\delta \tau_p/C}(\Omega))\cdot n_p).
\end{equation}
Then our target set is \begin{equation}\label{E:S-target}
    S:=\bigcup_{0<\delta<1}
 \left(\fn\setminus\bigcup_{p\in\NN}\cD(p,\delta)\right).
\end{equation}
The same argument as in \cite[Proposition~2.5]{Dani86} gives $S=\log{E_{N,\lambda\Gamma}}.$

\subsection{Proof of Proposition \ref{P:rank-one-haw}} In this section, we give a rigorous proof of Proposition \ref{P:rank-one-haw}. If the lattice $\Gamma$ is cocompact in $G$, or $F$ is trivial, then the conclusion clearly holds. So we may assume that $\Gamma$ is not cocompact in $G$, and that $F=\{\exp(tY):t\in\RR\}$ for some nonzero $\ad$-$\RR$-diagonalizable $Y$. Following notations in Section \ref{SS:Dani-notation}, we claim that: the set $S$ as in \eqref{E:S-target} is HAW on $\fn$. 

Assume the truth of this claim for a moment. Since $\exp:\fn\to N$ is a diffeomorphism, by Lemma \ref{L:HAW-mnfd} (iii), we see that the set $E_{N,\lambda\Gamma}$ is HAW on $N$. This proves Proposition \ref{P:rank-one-haw} for any $x=\lambda\Gamma$ where $\lambda\in\Lambda$ is any fixed cusp representative for $G/\Gamma$. 

In general, for any $g\in G$, we consider the diffeomorphism (using Bruhat decomposition) $$
\psi_g:N\to (P\backslash G)\setminus \{Pg\},\qquad n\mapsto Pwng.
$$ 
It is clear that \begin{equation}\label{E:P-G-bounded}
\psi_g\left(E_{N,g\Gamma}\right)=\{Ph\in P\backslash G:F^-h\Gamma\text{ is bounded in }X\}.
\end{equation}
It follows from Lemma \ref{L:HAW-mnfd} (iii) and (iv) that the set $E_{N,g\Gamma}$ is HAW on $N$ if and only if the right-hand side of \eqref{E:P-G-bounded} is HAW on $P\backslash G$. Note that the right-hand side of \eqref{E:P-G-bounded} is independent of $g\in G$. So we conclude that for any two $g_1,g_2\in G$, $$
E_{N,g_1\Gamma}\text{ is HAW on }N\Longleftrightarrow E_{N,g_2\Gamma}\text{ is HAW on }N.
$$
In particular, when Proposition \ref{P:rank-one-haw} holds for one $x=\lambda\Gamma$ where $\lambda\in\Lambda$, it holds for all $x\in X$.

The remaining task is to verify the above claim. Recall that in Section \ref{SS:Dani-notation}, for a fixed $\lambda\in \Lambda$, we have defined a constant $M\geq 1$, a sequence $\{\tau_p:p\in\NN\}$ and a family of dangerous sets $\{\cD(p,\delta):p\in\NN,\;\delta\in (0,1)\}$, independent of choices of $B$ and $\beta$. In view of Lemma \ref{L:thin-separated-haw}, it suffices to verify that: given any compact subset $B\subseteq \fn$ and any $\beta\in (0,1)$, there exists some $\epsilon=\epsilon(B,\beta)>0$ such that the conditions (i) and (ii) hold.

\subsubsection{Verification of the condition (i)}\label{SS:thinness}
We first verify the condition (i) of Lemma~\ref{L:thin-separated-haw} for the dangerous sets $\cD(p,\delta)$ defined in \eqref{E:Dani-danger}. The bound $\tau_p\leq M$ is already given by \eqref{E:tau-p}.

If $N$ is abelian, the rank-one restricted-root decomposition gives $D_{\delta}=\delta\cdot\mathrm{id}_{\fn}$. Then
$$
 \cD(p,\delta)=\frac{\delta\tau_p}{C}\Omega+\log n_p,
$$
and hence
$$
 \tau(\cD(p,\delta))\leq\frac{\delta\tau_p}{C}\operatorname{diam}\Omega
 \leq\delta\tau_p.
$$

Now we assume that $N$ is non-abelian. By the rank-one restricted-root decomposition,
$$
 \fn=\fg_{\alpha}\oplus \fg_{2\alpha},\qquad
 \text{where }\alpha\text{ is the smallest positive restricted root associated with }A.
$$
Then we have $D_{\delta}=\sqrt{\delta}\cdot \id_{\fg_{\alpha}}\oplus \delta\cdot \id_{\fg_{2\alpha}}.$
Now we use the Baker--Campbell--Hausdorff formula to calculate $\cD(p,\delta)$. Write $n_p=\exp(u_p,v_p)$ and $\delta_p=\delta\tau_p/C$. Then \begin{equation}
    \begin{aligned}
        \cD(p,\delta)&=\left\{
        \log\left(
        \exp\left(\delta_p^{1/2}u_0,\delta_pv_0\right)\cdot \exp(u_p,v_p)
        \right):(u_0,v_0)\in\Omega
        \right\}\\
        &=\left\{
        \left(
        \delta_p^{1/2}u_0+u_p,\;
        \delta_p v_0+v_p+\frac{1}{2}\left[\delta_p^{1/2}u_0,u_p\right]
        \right):(u_0,v_0)\in\Omega
        \right\}.
    \end{aligned}
\end{equation}
This means that for any $(u,v)\in \cD(p,\delta)$, there exists some $(u_0,v_0)\in\Omega$ such that
$$
 u=\delta_p^{1/2}u_0+u_p,
 \qquad
 v=\delta_p v_0+v_p+\frac{1}{2}\left[\delta_p^{1/2}u_0,u_p\right].
$$
Fix any unit linear functional $\ell\in \fg_{2\alpha}^*$. It follows that
$$
 \left|\ell\left(v-v_p-\frac12[u-u_p,u_p]\right)\right|=\delta_p|\ell(v_0)|\leq\frac{\delta\tau_p}{2}.
$$
Thus, for the affine hyperplane in $\fn$:
$$
 L_p:=\left\{(u,v)\in\fg_{\alpha}\oplus\fg_{2\alpha}:\ell\left(v-v_p-\frac12[u-u_p,u_p]\right)=0\right\},
$$
we have
$\cD(p,\delta)\subset L_p^{(\delta\tau_p/2)}$, and hence $\tau(\cD(p,\delta))\leq\delta\tau_p.$ This verifies the condition (i).

\subsubsection{Verification of the condition (ii)}\label{SS:separation} Now we verify the condition (ii) of Lemma~\ref{L:thin-separated-haw}. Fix a compact subset $B\subset\mathfrak n$ and $\beta\in(0,1)$. The proof is divided into several steps.

\smallskip
\noindent\textit{Step 1. Separation of the normalized centers.}
Write $$c_p=\exp(\sigma Y)a_p^{-1}.$$ Following \cite[Proof of Theorem~4.1]{Dani86}, using cusp disjointness and \cite[Lemma~1.8]{Dani86}, we see that there exists a neighborhood $U$ of the identity in $N$ such that
\begin{equation}\label{app:center-separation}
 c_p(n_pn_{p'}^{-1})c_p^{-1}\notin U
\end{equation}
whenever $p\neq p'$ and $\beta\tau_p\leq\tau_{p'}\leq\beta^{-1}\tau_p$.

\smallskip
\noindent\textit{Step 2. Separation after normalization.}
We choose and fix a symmetric neighborhood $U_1$ of the identity such that $U_1^3\subseteq U$. Since $\Omega$ is bounded and $D_\delta\to0$ as $\delta\to0^+$, we can choose $\delta_0\in(0,1)$ sufficiently small such that
$$
 \exp(D_{\delta\eta^\sigma s}(\Omega))\subset U_1
 \qquad \text{for all }\delta\in (0,\delta_0),\;s\in [\beta,\beta^{-1}].
$$

Let $p\neq p'$ satisfy $\beta\tau_p\leq\tau_{p'}\leq\beta^{-1}\tau_p$, and let $\delta\in (0,\delta_0)$. For any $x\in\cD(p,\delta)$ and $x'\in\cD(p',\delta)$, by \eqref{E:Dani-danger}, there exist $z,z'\in\Omega$ such that
$$
 \exp x=\exp(D_{\delta\tau_p/C}(z))\cdot n_p,\qquad
 \exp x'=\exp(D_{\delta\tau_{p'}/C}(z'))\cdot n_{p'}.
$$
Using $\Ad(c_p)\big|_{\fn}=D_{C\eta^{\sigma}/\tau_p}$ and $\tau_p=C\eta^{t_p}$,
we obtain
$$
 \begin{aligned}
 c_p(n_pn_{p'}^{-1})c_p^{-1}
 &=
 \exp(-D_{\delta\eta^\sigma}(z))\cdot
 \bigl(c_p\exp x\exp(-x')c_p^{-1}\bigr)\cdot 
 \exp(D_{\delta\eta^{\sigma}\tau_{p'}/\tau_p}(z')).
 \end{aligned}
$$
Note that both the first and third factors on the right-hand side belong to $U_1$, but the left-hand side does not belong to $U_1^3$. It follows that the middle factor on the right-hand side does not belong to $U_1$. This gives
\begin{equation}\label{E:normalized-separation}
 \|\Ad(c_p)\log(\exp x\exp(-x'))\|\gg_\beta 1.
\end{equation}

\smallskip
\noindent\textit{Step 3. Separation in the Euclidean norm.}
Suppose in addition that $\cD(p,\delta)$ and $\cD(p',\delta)$ both meet $B$. Following \cite[Lemma~4.6 and (4.7)]{Dani86}, we see that they lie in a fixed compact subset of $\fn$, on which we have the uniform estimate
$$
 \|\log(\exp x\exp(-x'))\|\ll_B\|x-x'\|.
$$
Note that
$$
 \|\Ad(c_p)|_{\fn}\|=\|D_{C\eta^\sigma/\tau_p}\|
 \leq\max\left\{
 \sqrt{\frac{C\eta^\sigma}{\tau_p}},\;\frac{C\eta^\sigma}{\tau_p}
 \right\}
 \ll\frac{1}{\tau_p}.
$$
Combining these estimates with \eqref{E:normalized-separation} gives $\|x-x'\|\gg_{B,\beta}\tau_p.$ Since $\tau_{p'}\leq\beta^{-1}\tau_p$, taking the infimum over $x\in\cD(p,\delta)$ and $x'\in\cD(p',\delta)$ yields
$$
 \dist(\cD(p,\delta),\cD(p',\delta))
 \gg_{B,\beta}\tau_p+\tau_{p'}.
$$
Therefore, by choosing $\epsilon=\epsilon(B,\beta)\in (0,\delta_0)$ sufficiently small, we obtain
$$
 \dist(\cD(p,\delta),\cD(p',\delta))
 \geq\epsilon(\tau_p+\tau_{p'})
 \qquad\text{for all }0<\delta<\epsilon.
$$
This verifies the condition (ii). The proof is complete.

\section*{Acknowledgment}
The author would like to thank Lifan Guan for suggesting these problems and proposing a promising approach to solving them. After finishing the paper \cite{Wu26} with the assistance of AI tools, the author realized that the simplified arguments there can be adapted to prove the winning result (Theorem \ref{T:para-winning}) on the product of a parameter space and its corresponding expanding horospherical subgroup, from which all main theorems will follow.

The author would also like to express his deepest gratitude to his Ph.D. advisor Jinpeng An and his co-advisor Dmitry Kleinbock during his self-funded visit to Brandeis University, for their constant encouragement along the winding way.

\section*{Declaration of AI Use}
Based on the preceding AI-assisted work \cite{Wu26}, the author independently developed the basic approach to proving the main theorems in the current paper. During the preparation of this work, the author used GPT-6 Pro to assist in doing calculations and checking proofs in Section \ref{S:global-local-estimates}, as well as in finding the modified proof in Appendix \ref{A:Dani}. 

The author would like to emphasize here that he had long questioned whether a proof of Theorem \ref{T:semisimple} in the real rank $1$ case could indeed be extracted from the arguments in \cite[Proof of Theorem 4.1]{Dani86}, as stated in \cite[Introduction]{KM96} and \cite[Section 7]{AGK}. With the assistance of AI tools, he was eventually able to carry out this deduction rigorously.

The author has independently verified all mathematical arguments and takes full responsibility for the manuscript.


\begin{thebibliography}{99}

\bibitem{An13} J.\ An, \textsl{Badziahin-Pollington-Velani's theorem and Schmidt's game}, Bull. Lond. Math. Soc. {\bf 45} (2013), no. 4, 721--733.
			
\bibitem{An16} \bysame, \textsl{2-dimensional badly approximable vectors and Schmidt's game}, Duke Math. J. {\bf 165} (2016), no. 2, 267--284.

\bibitem{AGGL} J.\ An, A.\ Ghosh, L.\ Guan, T.\ Ly, \textsl{Bounded orbits of diagonalizable flows on finite volume quotients of products of $\SL_2(\RR)$}, Adv. Math. {\bf 354} (2019), 106743.

\bibitem{AGK} J.\ An, L.\ Guan, D.\ Kleinbock, \textsl{Bounded orbits of diagonalizable flows on $\SL_3(\RR)/\SL_3(\ZZ)$}, Int. Math. Res. Not. IMRN {\bf 2015} (2015), no. 24, 13623--13652. 

\bibitem{BH17} D. Badziahin and S. Harrap, \textsl{Cantor-winning sets and their applications}, Advances in Mathematics {\bf 318} (2017): 627--677.

            \bibitem{BHNS18} D. Badziahin, S. Harrap, E. Nesharim, and D. Simmons, \textsl{Schmidt Games and Cantor Winning Sets}, Ergodic Theory and Dynamical Systems {\bf 45}, no. 1 (2025): 71--110.

\bibitem{BM00} M. B. Bekka and M. Mayer, \textsl{Ergodic Theory and Topological Dynamics of Group Actions on Homogeneous Spaces}, London Mathematical Society Lecture Note Series, vol.~269, Cambridge University Press, Cambridge, 2000.

\bibitem{Benoist09} Y.\ Benoist, \textsl{Five lectures on lattices in semisimple Lie groups}, in G\'eom\'etries \`a courbure n\'egative ou nulle, groupes discrets et rigidit\'es, S\'emin. Congr. {\bf 18}, Soc. Math. France, Paris, 2009, 117--176.

\bibitem{BNY21} V.\ Beresnevich, E.\ Nesharim, L.\ Yang, \textsl{$\mathrm{Bad}(\mathbf{w})$ is hyperplane absolute winning}, Geom. Funct. Anal. {\bf 31} (2021), 1--33. 

\bibitem{BNY22} \bysame, \textsl{Winning property of badly approximable points on curves}, Duke Math. J. {\bf 171} (2022), no. 14, 2841--2880. 

\bibitem{BE95} P.~Borwein and T.~Erd\'elyi, \textsl{Polynomials and Polynomial Inequalities}, Graduate Texts in Mathematics, vol.~161, Springer-Verlag, New York, 1995.

\bibitem{BBFKW10} R.\ Broderick, Y.\ Bugeaud, L.\ Fishman, D.\ Kleinbock, B.\ Weiss, \textsl{Schmidt's game, fractals, and numbers normal to no base}, Math.\ Res.\ Lett., {\bf 17} (2010), no.\ 2, 307--321.
			
			
			\bibitem{BFK11} R.\ Broderick, L.\ Fishman, D.\ Kleinbock, \textsl{Schmidt's game, fractals, and orbits of toral endomorphisms},  Ergodic Theory Dynam.\ Systems {\bf 31}  (2011),  no.\ 4, 1095--1107.
            
            \bibitem{BFKRW12} R.\ Broderick, L.\ Fishman, D.\ Kleinbock, A.\ Reich, B.\ Weiss, \textsl{The set of badly approximable vectors is strongly $C^1$ incompressible}, Math.\ Proc.\ Cambridge Philos.\ Soc.\ {\bf 153} (2012), no.\ 2, 319--339.
			
			



\bibitem{Cassel} J. W. S. Cassels, \textsl{An introduction to the geometry of numbers}, Corrected reprint of the 1971 edition, Springer-Verlag, Berlin, 1997.

\bibitem{Dan79} S. G.\ Dani, \textsl{On invariant measures, minimal sets and a lemma of Margulis}, Invent.\ Math.\ {\bf 51} (1979), 239--260.

\bibitem{Dani85} \bysame, \textsl{Divergent trajectories of flows on homogeneous spaces and Diophantine approximation}, J. Reine Angew. Math. {\bf 359} (1985), 55--89.

\bibitem{Dani86} \bysame, \textsl{Bounded orbits of flows on homogeneous spaces}, Comment. Math. Helv. {\bf 61} (1986), 636--660. 

\bibitem{Dan86} \bysame, \textsl{On orbits of unipotent flows on homogeneous spaces, II}, Ergodic Theory Dynam.\ Systems {\bf 6} (1986), no.\ 2, 167--182.

\bibitem{EGL16} M.\ Einsiedler, A.\ Ghosh, B.\ Lytle, \textsl{Badly approximable vectors, $C^1$ curves and number fields}, Ergodic Theory Dynam. Systems {\bf 36} (2016), no. 6, 1851--1864.

\bibitem{GW26} L.\ Guan, C.\ Wu, \textsl{Bounded geodesics on locally symmetric spaces}, Ergodic Theory Dynam. Systems {\bf 46} (2026), no.~8, 1969--1993.

\bibitem{GW18} L.\ Guan, W.\ Wu, \textsl{Bounded orbits of certain diagonalizable flows on $\SL_n(\RR)/\SL_n(\ZZ)$}, Trans. Amer. Math. Soc. {\bf 370} (2018), no. 7, 4661--4681.

\bibitem{Hum72} J.\ E.\ Humphreys, \textsl{Introduction to Lie Algebras and Representation Theory}, Graduate Texts in Mathematics, vol.~9, Springer-Verlag, New York, 1972.

\bibitem{KLW04} D. Y. \ Kleinbock, E.\ Lindenstrauss, and B.\ Weiss, \textsl{On fractal measures and Diophantine approximation}, Selecta Math.\ (N.S.) {\bf 10} (2004), no.\ 4, 479--523.

\bibitem{KL16} D. Y. \ Kleinbock, T.\ Ly, \textsl{Badly approximable $S$-numbers and absolute Schmidt games}, J. Number Theory {\bf 164} (2016), 13--42.

\bibitem{KM96} D. Y.\ Kleinbock, G. A.\ Margulis, \textsl{Bounded orbits of nonquasiunipotent flows on homogeneous spaces}, Amer. Math. Soc. Transl. Ser. 2 {\bf 171} (1996), 141--172.

\bibitem{KM98} \bysame, \textsl{Flows on homogeneous spaces and Diophantine approximation on manifolds}, Ann.\ of Math.\ (2) {\bf 148} (1998), no.\ 1, 339--360.

\bibitem{KW13} D. Y.\ Kleinbock, B.\ Weiss, \textsl{Modified Schmidt games and a conjecture of Margulis}, J. Mod. Dyn. {\bf 7} (2013), no. 3, 429--460. 

\bibitem{KW15} \bysame,  \textsl{Values of binary quadratic forms at integer points and Schmidt games}, in: {\bf Recent trends in ergodic theory and dynamical systems (Vadodara, 2012)}, pp.\  77--92, Contemp.\ Math. {\bf 631}, Amer.\ Math.\ Soc., Providence, RI, 2015.

\bibitem{Knapp02} A.\ W.\ Knapp, \textsl{Lie Groups Beyond an Introduction}, 2nd ed., Progress in Mathematics, vol.~140, Birkh\"auser, Boston, 2002.

\bibitem{Mar71} G. A.\ Margulis, \textsl{On the action of unipotent groups in a lattice space}, Math.\ USSR-Sb.\ {\bf 15} (1971), no.\ 4, 549--554.

\bibitem{McM10} C. T. McMullen, \textsl{Winning sets, quasiconformal maps and Diophantine approximation}, Geom. Funct. Anal. \textbf{20} (2010), no.~3, 726--740.

\bibitem{Morris15} D.\ W. Morris, \textsl{Introduction to Arithmetic Groups}, Deductive Press, 2015.

\bibitem{Rag72} M. S. Raghunathan, \textsl{Discrete Subgroups of Lie Groups}, Ergebnisse der Mathematik und ihrer Grenzgebiete, vol.~68, Springer-Verlag, New York--Heidelberg, 1972.

\bibitem{Saw23} G.\ Sawant, \textsl{Weighted badly approximable complex vectors and bounded orbits of certain diagonalizable flows}, Int. J. Number Theory {\bf 19} (2023), no. 8, 1977--1993.

\bibitem{Saw25} \bysame, \textsl{Hyperplane absolute winning property of bounded orbits under diagonalizable flows on $\SL_3(\CC)/\SL_3(\mathcal{O}_K)$}, Proc. Indian Acad. Sci. Math. Sci. {\bf 135} (2025), Paper No. 7.


\bibitem{Sch66} W. M.\ Schmidt,  \textsl{On badly approximable numbers and certain games}, Trans.\ Amer.\ Math.\ Soc.\ {\bf 123} (1966), 178--199.

\bibitem{Sch69}    \bysame,  \textsl{Badly approximable systems of linear forms}, J.\ Number Theory {\bf 1} (1969), 139--154.

\bibitem{Waldschmidt92} M.\ Waldschmidt, \textsl{Linear independence of logarithms of algebraic numbers}, IMSc Report {\bf 116}, The Institute of Mathematical Sciences, Madras, 1992.

\bibitem{Wu26} C.\ Wu, \textsl{$\Bad(\mathbf{r};\mathbf{s})$ is hyperplane absolute winning}, preprint, 2026.

\bibitem{Zimmer84} R. J. Zimmer, \textsl{Ergodic Theory and Semisimple Groups}, Monographs in Mathematics {\bf 81}, Birkhäuser, 1984.

\end{thebibliography}
\end{document}